\numberwithin{equation}{section}
\begin{document}

\theoremstyle{plain}
\newtheorem{Lemma}{Lemma}[section]
\newtheorem{Prop}[Lemma]{Proposition}
\newtheorem{Thm}[Lemma]{Theorem}
\newtheorem{Cor}[Lemma]{Corollary}

\theoremstyle{definition}
\newtheorem{Def}[Lemma]{Definition}
\newtheorem{Rk}[Lemma]{Remark}
\newtheorem{Example}[Lemma]{Example}
\newtheorem{Exercise}[Lemma]{Exercise}

\newcommand{\Natural}{\mbox{${\bf N}$}}
\newcommand{\Integer}{\mbox{${\bf Z}$}}
\newcommand{\Real}{\mbox{${\bf R}$}}
\newcommand{\Complex}{\mbox{${\bf C}$}}

\newcommand{\Eps}{\varepsilon}
\newcommand{\Alpha}{A}
\newcommand{\Beta}{B}
\newcommand{\Epsilon}{E}
\newcommand{\Zeta}{Z}
\newcommand{\Eta}{H}
\newcommand{\Iota}{I}
\newcommand{\Kappa}{K}
\newcommand{\Mu}{M}
\newcommand{\Nu}{N}
\newcommand{\omicron}{o}
\newcommand{\Omicron}{O}
\newcommand{\Rho}{R}
\newcommand{\Tau}{T}
\newcommand{\Chi}{X}

\newcommand{\Sfrac}[2]{\mbox{\small$\frac{#1}{#2}$\normalsize}}
\newcommand{\Half}{\Sfrac{1}{2}}
\newcommand{\Vect}[1]{{\bf #1}}
\newcommand{\Grad}[1]{\nabla #1}
\newcommand{\Gradp}[1]{\nabla' #1}
\newcommand{\Gradx}[1]{\nabla_x #1}
\newcommand{\Gradxp}[1]{\nabla_{x'} #1}
\newcommand{\GradAlphap}[1]{\nabla_{\alpha}' #1}
\newcommand{\Div}[1]{\text{div}\left[#1\right]}
\newcommand{\Divp}[1]{\text{div}'\left[#1\right]}
\newcommand{\Divx}[1]{\text{div}_x \left[#1\right]}
\newcommand{\Divxp}[1]{\text{div}_{x'} \left[#1\right]}
\newcommand{\Curl}[1]{\text{curl}[#1]}
\newcommand{\Laplacian}[1]{\Delta #1}
\newcommand{\Laplacianp}[1]{\Delta' #1}
\newcommand{\Laplacianx}[1]{\Delta_x #1}
\newcommand{\Laplacianxp}[1]{\Delta_{x'} #1}
\newcommand{\Biharmonic}[1]{\Delta^2 #1}
\newcommand{\FT}[1]{{\cal F} \left\{ #1 \right\}}
\newcommand{\FTI}[1]{{\cal F}^{-1} \left\{ #1 \right\}}
\newcommand{\Variation}[2]{\delta_{#2} #1}

\newcommand{\Norm}[2]{\left\|#1\right\|_{#2}}
\newcommand{\LeftNorm}[1]{\left\|#1\right.}
\newcommand{\RightNorm}[2]{\left.#1\right\|_{#2}}
\newcommand{\SupNorm}[1]{\left|\left|#1\right|\right|_{L^{\infty}}}
\newcommand{\HolderNorm}[2]{\left|#1\right|_{C^{#2}}}
\newcommand{\SobNorm}[2]{\left\|#1\right\|_{H^{#2}}}

\newcommand{\InnerProd}[2]{\left\langle#1,#2\right\rangle}
\newcommand{\DotProd}[2]{\left\langle#1,#2\right\rangle}
\newcommand{\Abs}[1]{\left|#1\right|}
\newcommand{\Mod}[1]{\left|#1\right|}
\newcommand{\Angle}[1]{\langle #1 \rangle}
\newcommand{\RealPart}[1]{\text{Re\{}#1\text{\}}}
\newcommand{\ImagPart}[1]{\text{Im\{}#1\text{\}}}
\newcommand{\Null}[1]{\mbox{${\cal N}$}(#1)}
\newcommand{\Ran}[1]{\text{ran}(#1)}
\newcommand{\Ker}[1]{\text{ker}(#1)}
\newcommand{\Dim}[1]{\text{dim}(#1)}
\newcommand{\Rank}[1]{\text{rank}(#1)}
\newcommand{\Det}[1]{\mbox{det} #1}
\newcommand{\Span}[1]{\text{span}(#1)}
\newcommand{\sgn}{\text{sgn}}

\newcommand{\sech}{\mbox{$\mathrm{sech}$}}
\newcommand{\csch}{\mbox{$\mathrm{csch}$}}

\newcommand{\Intersect}{\cap}
\newcommand{\Union}{\cup}

\newcommand{\dftl}[1]{\; d#1}
\newcommand{\dV}{\dftl{V}}
\newcommand{\dS}{\dftl{S}}
\newcommand{\dx}{\dftl{x}}
\newcommand{\dy}{\dftl{y}}
\newcommand{\dz}{\dftl{z}}
\newcommand{\ds}{\dftl{s}}
\newcommand{\dt}{\dftl{t}}
\newcommand{\du}{\dftl{u}}
\newcommand{\dsigma}{\dftl{\sigma}}

\newcommand{\BigOh}[1]{\mathcal{O}(#1)}
\newcommand{\LittleOh}[1]{\mathcal{o}(#1)}

\newcommand{\cA}{\mathcal{A}}
\newcommand{\cB}{\mathcal{B}}
\newcommand{\cC}{\mathcal{C}}
\newcommand{\cD}{\mathcal{D}}
\newcommand{\cE}{\mathcal{E}}
\newcommand{\cF}{\mathcal{F}}
\newcommand{\cG}{\mathcal{G}}
\newcommand{\cH}{\mathcal{H}}
\newcommand{\cI}{\mathcal{I}}
\newcommand{\cJ}{\mathcal{J}}
\newcommand{\cK}{\mathcal{K}}
\newcommand{\cL}{\mathcal{L}}
\newcommand{\cM}{\mathcal{M}}
\newcommand{\cN}{\mathcal{N}}
\newcommand{\cO}{\mathcal{O}}
\newcommand{\cP}{\mathcal{P}}
\newcommand{\cQ}{\mathcal{Q}}
\newcommand{\cR}{\mathcal{R}}
\newcommand{\cS}{\mathcal{S}}
\newcommand{\cT}{\mathcal{T}}
\newcommand{\cU}{\mathcal{U}}
\newcommand{\cV}{\mathcal{V}}
\newcommand{\cW}{\mathcal{W}}
\newcommand{\cX}{\mathcal{X}}
\newcommand{\cY}{\mathcal{Y}}
\newcommand{\cZ}{\mathcal{Z}}

\newcommand{\px}{\partial_x}
\newcommand{\py}{\partial_y}
\newcommand{\pz}{\partial_z}
\newcommand{\pt}{\partial_t}

\newcommand{\sumn}{\sum_{n=0}^{\infty}}
\newcommand{\sumno}{\sum_{n=1}^{\infty}}
\newcommand{\summ}{\sum_{m=0}^{\infty}}
\newcommand{\sump}{\sum_{p \in \Gamma'}}
\newcommand{\sumq}{\sum_{q \in \Gamma'}}

\newcommand{\be}{\begin{equation}}
\newcommand{\ee}{\end{equation}}
\newcommand{\bes}{\begin{equation*}}
\newcommand{\ees}{\end{equation*}}
\newcommand{\bse}{\begin{subequations}}
\newcommand{\ese}{\end{subequations}}

\newcommand{\Schrodinger}{Schr\"odinger}
\newcommand{\Holder}{H\"older}
\newcommand{\Calderon}{Calder\'{o}n}
\newcommand{\Pade}{Pad\'{e}}

\newcommand{\Question}[1]{\fbox{ {\bf Q: #1} }}
\newcommand{\Corrected}[1]
{\noindent \rule{\linewidth}{.75mm} \\ {\bf CORRECTED UP TO HERE (#1)} \\ \rule{\linewidth}{.75mm}}
\newcommand{\void}[1]{}

\newcommand{\RevOne}[1]{\textcolor{red}{#1}}
\newcommand{\RevTwo}[1]{\textcolor{blue}{#1}}
\newcommand{\RevThree}[1]{\textcolor{green}{#1}}

\newcommand{\pN}{\partial_N}
\newcommand{\pa}{\partial_{\alpha}}
\newcommand{\Da}{D_{\alpha}}
\newcommand{\Rational}{\mathbf{Q}}
\newcommand{\Grada}[1]{\nabla_{\alpha} #1}
\newcommand{\Diva}[1]{\text{div}_{\alpha} \left[#1\right]}
\newcommand{\Laplaciana}[1]{\Delta_{\alpha} #1}

\newcommand{\tf}{\tilde{f}}
\newcommand{\tg}{\tilde{g}}
\newcommand{\tu}{\tilde{u}}
\newcommand{\tv}{\tilde{v}}
\newcommand{\tvarphi}{\tilde{\varphi}}
\newcommand{\teta}{\tilde{\eta}}
\newcommand{\txi}{\tilde{\xi}}
\newcommand{\tnu}{\tilde{\nu}}
\newcommand{\tG}{\tilde{G}}
\newcommand{\tT}{\tilde{T}}
\newcommand{\tpsi}{\tilde{\psi}}
\newcommand{\tF}{\tilde{F}}
\newcommand{\tJ}{\tilde{J}}
\newcommand{\tj}{\tilde{j}}
\newcommand{\tL}{\tilde{L}}
\renewcommand{\th}{\tilde{h}}

\newcommand{\tFa}{\tF^{\alpha}}
\newcommand{\hFa}{\hat{F}^{\alpha}}

\newcommand{\AKp}{\Abs{K^T p}}
\newcommand{\AKD}{\Abs{K^T D}}

\newcommand{\Ap}{\Abs{p}}
\newcommand{\AD}{\Abs{D}}
\newcommand{\exct}{{\text{exact}}}
\newcommand{\XNorm}[2]{\left\|#1\right\|_{X^{#2}}}

\newcommand{\blue}[1]{\textcolor{blue}{#1}}
\newcommand{\dave}[1]{\textcolor{red}{#1}}

%
%

\title{Analyticity and Stable Computation of Dirichlet--Neumann
  Operators for Laplace's Equation under Quasiperiodic Boundary
  Conditions in Two and Three Dimensions}
\author{
  David P.\ Nicholls \\
  Department of Mathematics, Statistics, \\
  and Computer Science, \\
  University of Illinois at Chicago, \\
  Chicago, IL 60607, USA.
  \and
  Jon Wilkening \\
  Department of Mathematics, \\
  University of California, \\
  Berkeley, CA 94720, USA.
  \and
  Xinyu Zhao \\
  Department of Mathematical Sciences, \\
  New Jersey Institute of Technology, \\
  Newark, NJ 07102, USA.
  }

\maketitle

\begin{abstract}
Dirichlet--Neumann Operators (DNOs) are important to the formulation,
analysis, and simulation of many crucial models found in engineering
and the sciences. For instance, these operators permit moving--boundary
problems, such as the classical water wave problem (free--surface
ideal fluid flow under the influence of gravity and capillarity), to be 
restated in terms of interfacial quantities, which not only eliminates
the boundary
tracking problem, but also reduces the problem dimension. While these
DNOs have been the object of much recent study regarding their numerical
simulation and rigorous analysis, they have yet to be examined in the
setting of laterally quasiperiodic boundary conditions. The purpose of this
contribution is to begin this investigation with a particular eye
towards the problem of more realistically simulating two and three 
dimensional surface water waves.
Here we not only carefully define the DNO with
respect to these boundary conditions for Laplace's equation, but we
also show the rigorous analyticity of these operators with respect
to sufficiently smooth boundary perturbations. These theoretical 
developments suggest a novel algorithm
for the stable and high--order simulation of the DNO, which we implement
and extensively test.
\end{abstract}

%
%

\section{Introduction}
\label{Sec:Intro}

Many models arising in engineering and science couple partial differential 
equations (PDEs) for volumetric field quantities to their Dirichlet and Neumann
traces at domain boundaries and interfaces. Often, the governing PDEs in the
bulk are rather trivial (e.g., linear and constant coefficient) and the 
simulation challenges (numerical and analytical) arise from the complicated
nature of the problem geometry (e.g., non--separable or unbounded) or 
from nonlinear conditions at the interfaces. For these problems, not only 
can these technical challenges be effectively addressed, but also the 
dimension of the problem can be reduced by restating them entirely in terms
of interfacial quantities (the Dirichlet data). However, this requires 
that the
corresponding surface normal derivatives (the Neumann data) can be
recovered by solving the underlying PDE. Dirichlet--Neumann 
operators (DNOs) accomplish this procedure.  These DNOs arise in a 
wide array of applications ranging from 
linear acoustic \cite{Ihlenburg98},
electromagnetic \cite{Jackson75,Petit80}, 
and
elastic \cite{Achenbach73}
scattering to 
solid \cite{Godreche92} and 
fluid \cite{Lamb,Acheson90}
mechanics. Of particular interest in this contribution are the DNOs
which arise in the study of the water waves problem 
\cite{Lamb,Whitham,CraigSulem93} which
models the free--surface evolution of an ideal fluid under the influence
of gravity and capillarity. In light of these considerations it is clearly
desirable to have a thorough understanding of not only the analytical
properties of these DNOs, but also accurate and stable methods for their
numerical simulation.

All of the classical methods for the numerical approximation of solutions
of PDEs have been brought to bear upon this problem: 
Finite Difference Methods \cite{Strikwerda04,LeVeque07}, 
Finite Element Methods \cite{Johnson87,Gockenbach06},
Spectral Methods \cite{GottliebOrszag77,Boyd01,ShenTangWang11},
and their various generalizations and refinements. However, for the
problems that we have in mind, which feature \textit{homogeneous} PDEs
in the bulk, these \textit{volumetric} solvers are needlessly expensive
as they discretize the full domain rather than simply the boundaries.
Consequently, interfacial methods such as Boundary Integral/Element
Methods (BIM/BEM) have very successfully been applied to the problem of 
simulating DNOs
\cite{BakerMeironOrszag82,HouLowengrubShelley94,Beale01a,WilkeningVasan15}.
Of particular note in the two--dimensional setting, are the conformal mapping
techniques \cite{dyachenko1996analytical, choi1999exact, ruban2004water, dyachenko2016branch, hunter2016two}, which map the fluid domain to the lower half plane or a horizontal strip, thus simplifying the computation of the DNOs to Hilbert transforms which can be easily 
computed in Fourier space. Another major advantage of the conformal mapping techniques is that 
they can be used to compute water waves that are not graphs of functions, such as overhanging waves.
However, these techniques cannot be easily extended to three dimensions.

In this work we advocate for a different class of efficient and accurate
interfacial methods for simulating DNOs which 
view the problem geometry as a deviation from a simpler configuration
(e.g., planar, circular). In summary, the DNO can be shown to depend 
\textit{analytically} upon the deviation magnitude, $\Eps$,
\cite{NichollsReitich99} implying that it can be expressed as
a convergent Taylor series in $\Eps$. The numerical algorithm then
consists of approximating, say, the first $N$ terms in this series and
performing the (finite) summation \cite{NichollsReitich00a}. Initially
one makes the assumption that the perturbation parameter $\Eps$ is
sufficiently small, however, it can be demonstrated that, under certain
reasonable geometrical constraints, the domain of analyticity includes the
\textit{entire} real axis \cite{NichollsReitich00b} so that (real) perturbations
of any size can be simulated provided one has an appropriate analytic
continuation strategy (e.g., Pad\'e summation \cite{BakerGravesMorris96}).
These ``High--Order Perturbation of Surfaces'' (HOPS) methods have been
shown to be highly efficient, accurate, and stable
\cite{NichollsReitich99,NichollsReitich00a,NichollsReitich00b}
within their domain of applicability,
which is certainly the case for water wave simulations up to the
limitations of the formulation (e.g., overturning and breaking).
Importantly, and in contrast to volumetric methods or BEM/BIM, the
derivation, implementation, and rigorous analysis of these HOPS
methods do \textit{not} depend strongly on the problem dimension.

Perhaps due to the extreme analytical and numerical difficulties
presented by the strongly nonlinear
interfacial boundary conditions of the water wave free--boundary
problem, the issue of lateral boundary conditions for this
model has been given
secondary consideration over the years. In fact, almost all
progress has been reported for water waves which exhibit either
periodic patterns \cite{stokes1847, levi1925determination, CraigNicholls99}
or decay to a flat state as the
lateral coordinates approach infinity \cite{Rayleigh1876, vandenBroeck:92, milewski:10}.
However, not
only do such conditions prohibit the study of subharmonic 
(of periods longer than the base period) or noncommensurate 
(of periods irrationally related to the base period)
wave interactions, but also, ocean waves simply look neither periodic
nor decaying at infinity. For these reasons, researchers have studied 
model equations exhibiting quasi-periodic patterning 
\cite{bridges1996spatially, 
WilkeningZhao21a,WilkeningZhao21b,WilkeningZhao23a,WilkeningZhao23b, Sun:Topalov:2023,
Dyachenko:Semenova:2023}. Particularly, two of the authors studied the full water
wave problem with solutions exhibiting quasiperiodic 
patterning in the sense advocated by Moser \cite{Moser66} and made
precise in \S~\ref{Sec:QP}. They
utilized a surface integral formulation specific to water waves
which they addressed with a conformal mapping technique that delivered
highly accurate solutions in a rapid and stable fashion. However, this
scheme is inherently limited to two--dimensional configurations
(one lateral and one vertical) and a three--dimensional version is
highly desirable.

As mentioned in the work of Wilkening \& Zhao 
\cite{WilkeningZhao21b}
one possible
approach to extending their results to higher dimensions is to follow
the work of Craig \& Sulem \cite{CraigSulem93} and restate
the water wave equations in terms of the DNO associated to Laplace's
equation subject to quasiperiodic boundary conditions.
For this one requires not only a fast, stable, and accurate numerical
algorithm for the simulation of these DNOs, but also a rigorous 
analysis justifying their convergence. In this publication we provide,
for the first time in the setting of quasiperiodic boundary conditions,
\textit{three} such HOPS algorithms, together with an analyticity theory
which provides the crucial first step in a full numerical analysis.

The paper is organized as follows: We begin by stating the governing
equations for the water wave problem \cite{Lamb} (\S~\ref{Sec:WWP})
which is the motivation for our study, with a particular discussion
of the quasiperiodic boundary conditions we employ (\S~\ref{Sec:QP}).
We then define the Dirichet--Neumann Operator (DNO), which allows
us to restate the water wave problem in terms of surface variables,
(\S~\ref{Sec:DNO}) with a detailed specification of the transparent
boundary conditions which permit a precise and uniform statement
of the boundary conditions in the far field (\S~\ref{Sec:TBC}).
With this we establish analyticity of the DNO in \S~\ref{Sec:Anal}
which requires a change of variables (\S~\ref{Sec:COV}),
a discussion of function spaces (\S~\ref{Sec:FcnSpaces}),
an elliptic estimate (\S~\ref{Sec:EllEst}), and an inductive
lemma (\S~\ref{Sec:Recur}) to deliver the analyticity results
for the field (\S~\ref{Sec:Anal:Field}) and the DNO
(\S~\ref{Sec:Anal:DNO}).
In \S~\ref{Sec:NumAlg} we describe our numerical algorithms
with a brief summary of our High--Order Spectral approach
in \S~\ref{Sec:HOS}. For the simulation of DNOs subject to
quasiperiodic boundary conditions, we describe our novel 
generalizations of the methods of
Operator Expansions (OE), \S~\ref{Sec:OE},
Field Expansions (FE), \S~\ref{Sec:FE},
Transformed Field Expansions (TFE), \S~\ref{Sec:TFE}.
In \S~\ref{Sec:Pade} we recall the method of Pad\'e approximation
which we use to implement numerical analytic continuation.
Finally, in \S~\ref{Sec:NumRes} we present numerical results of
our implementations of the OE, FE, and TFE algorithms as compared
with exact solutions from the Method of Manufactured Solutions
(\S~\ref{Sec:MMS}). In Appendix~\ref{Sec:EllEst:Proof} we present
a proof of the requisite elliptic estimate under quasiperiodic
boundary conditions which is central to establishing our result.

%
%

\section{The Water Wave Problem}
\label{Sec:WWP}

The classical water wave model simulates the free--surface
evolution of an irrotational, inviscid, and incompressible
(ideal) fluid under the influence of gravity and capillarity
\cite{Lamb,Acheson90}. These Euler equations are posed on
the moving domain
\bes
S_{h,\eta} = \left\{ (x,y) \in \Real^{n-1} \times \Real\ |\
  -h < y < \eta(x,t) \right\},
  \quad
  n \in \{ 2, 3 \},
\ees
where $0 < h \leq \infty$. The irrotational nature of the flow
demands that the fluid velocity be the gradient of a potential,
$\vec{v} = \Grad{\varphi}$, while incompressibility enforces that
the fluid's velocity be divergence free, $\Div{\vec{v}} = 0$.
Therefore, the velocity potential $\varphi$ is harmonic,
\bes
\Laplacianx{\varphi} + \py^2 \varphi = 0.
\ees
In the case of finite depth, the bottom is assumed to be impermeable
so that
\bes
\py \varphi(x,-h) = 0,
\ees
while a fluid of infinite depth mandates
\bes
\py \varphi \rightarrow 0, \qquad y \rightarrow -\infty.
\ees
These are supplemented with initial conditions, and 
the kinematic and Bernoulli conditions at the free surface,
\bse
\label{Eqn:WWP}
\begin{align}
& \pt \eta + \Gradx{\eta} \cdot \Gradx{\varphi} - \py \varphi = 0,
  && y = \eta, \\
& \pt \varphi + \frac{1}{2} \Abs{\Grad{\varphi}}^2 + g \eta
  - \sigma \Divx{ \frac{\Gradx{\eta}}{1 + \Abs{\Gradx{\eta}}^2} }
  = 0,
  && y = \eta.
\end{align}
\ese
All that remains is to specify the lateral boundary conditions which
the velocity potential, $\varphi$, and interface, $\eta$, must satisfy.
For this we choose quasiperiodicity in a sense we now make precise.

%
%

\subsection{Quasiperiodic Functions}
\label{Sec:QP}

There are many notions of quasiperiodicity that have been
advanced in the literature and we focus on the one prescribed
by J.\ Moser \cite{Moser66}. We define a function $f(x)$ to be 
quasiperiodic if
\bes
f(x) = \tf(\alpha),
\quad
\alpha = K x,
\quad
x \in \Real^n,
\quad
\alpha \in \Real^d,
\quad
K \in \Real^{d \times n},
\ees
and the envelope function $\tf(\alpha)$ is \textit{periodic}
with respect to the lattice
\bes
\Gamma = (2 \pi \Integer)^d,
\ees
so that
\bes
\tf(\alpha) = \sum_{p \in \Gamma'} \hat{f}_p e^{i p \cdot \alpha},
\quad
\hat{f}_p = \frac{1}{(2 \pi)^d} \int_{P(\Gamma)} \tf(\alpha) e^{-i p \cdot \alpha}
  \; d \alpha,
\ees
where
\bes
\Gamma' = \Integer^d, \qquad P(\Gamma) = [0, 2\pi)^d.
\ees
In order to specify truly non--periodic
functions we demand
that $d > n$ and the rows of $K$ are linearly
independent over the integers. For example,
in the case $n=1$ and $d=2$, we can choose
\bes
K = \begin{pmatrix} 1 \\ \kappa \end{pmatrix},
\quad
\kappa \not \in \Rational.
\ees
For later use we point out that simple calculations reveal
\bes
\Gradx{f(x)} = K^T \Grada{\tf(\alpha)},
\quad
\Laplacianx{f(x)} = \Diva{ K K^T \Grada{ \tf(\alpha) } }, 
\quad 
\alpha = K x.
\ees
Inspired by this definition of quasiperiodic functions of the
lateral variable $x$, we extend this notion to functions which
are laterally quasiperiodic but not vertically. For instance,
$u$ is said to be laterally quasiperiodic if
\bes
u(x,y) = \tu(\alpha,y),
\quad
\alpha = K x,
\quad
x \in \Real^n,
\quad
\alpha \in \Real^d,
\quad
K \in \Real^{d \times n},
\ees
and $\tu(\alpha,y)$ is $\alpha$--\textit{periodic} with respect
to the lattice $\Gamma = (2 \pi \Integer)^d$ so that
\bes
\tu(\alpha,y) = \sum_{p \in \Gamma'} \hat{u}_p(y) e^{i p \cdot \alpha},
\quad
p \in \Gamma' = \Integer^d.
\ees

%
%

\section{The Dirichlet--Neumann Operator}
\label{Sec:DNO}

Following the work of Craig \& Sulem \cite{CraigSulem93} we
restate the water wave problem in terms of the DNO. Due to the
time--independent nature of the DNO we suppress time dependence
of the free interface in its definition and use the notation
$y = g(x)$ to denote its parameterization, which
is assumed to be quasi--periodic throughout the current work.
For this we require the following definition.
\begin{Def}
Given Dirichlet data, $\xi(x)$, the unique laterally quasiperiodic
solution of the boundary value problem
\bse
\label{Eqn:Laplace:Orig}
\begin{align}
& \Laplacianx{\varphi} + \py^2 \varphi = 0 
  && \text{in $S_{h,g}$}, \\
& \varphi(x,g(x)) = \xi(x), && \text{at $y=g(x)$},
\end{align}
subject to the condition,
\begin{align}
& \py \varphi(x,-h) = 0, && \text{if $h < \infty$}, \\
& \py \varphi \rightarrow 0, \qquad y \rightarrow -\infty,
  && \text{if $h = \infty$},
\end{align}
\ese
specifies the Neumann data,
\bes
\nu(x) = \py \varphi(x,g(x)) 
  - \Gradx{g(x)}  \cdot \Gradx{\varphi(x,g(x))}.
\ees
The Dirichlet--Neumann Operator is defined by
\be
G(g): \xi(x) \rightarrow \nu(x).
\ee
\end{Def}
Given this definition we can restate the water wave problem
as the system of PDEs \cite{CraigSulem93,CraigNicholls99},
\begin{align*}
\pt \eta & = G(\eta) \xi, \\
\pt \xi & = -g \eta - \frac{1}{2 (1 + \Abs{\Gradx{\eta}}^2)}
  \left[ \Abs{\Gradx{\xi}}^2 - (G(\eta) \xi)^2
  - 2 (G(\eta) \xi) \Gradx{\xi} \cdot \Gradx{\eta}
  \right. \\ & \quad \left. 
  + \Abs{\Gradx{\xi}}^2 \Abs{\Gradx{\eta}}^2
  - (\Gradx{\xi} \cdot \Gradx{\eta})^2 \right] + \sigma \Divx{ \frac{\Gradx{\eta}}{1 + \Abs{\Gradx{\eta}}^2} }.
\end{align*}

For our purposes it is more convenient to restate the definition
of the DNO in terms of the independent variable $\alpha$ and
the envelope functions  $\{ \tvarphi, \tg, \txi, \tnu \}$. To do so,
we lift the lower--dimensional quasi--periodic problem in the lateral
direction to a periodic problem defined on a higher--dimensional torus.
Therefore, the equations are defined on the new domain 
\bes
S_{h,\tg} = \left\{ (\alpha,y) \in P(\Gamma) \times \Real\ |\
  -h < y < \tg(\alpha) \right\}.
\ees
\begin{Def}
Given Dirichlet data, $\txi(\alpha)$, the unique laterally periodic
solution of the boundary value problem
\bse
\label{Eqn:Laplace:QP}
\begin{align}
& \Diva{ K K^T \Grada{ \tvarphi(\alpha,y) } } 
  + \py^2 \tvarphi(\alpha,y) = 0,
  && \text{in $S_{h,\tg}$}, \\
& \tvarphi(\alpha,\tg(\alpha)) = \txi(\alpha),
  && y = \tg(\alpha),
\end{align}
subject to the condition,
\begin{align}
& \py \tvarphi(\alpha,-h) = 0, && \text{if $h < \infty$}, 
  \label{Eqn:Laplace:QP:c} \\
& \py \tvarphi \rightarrow 0, \qquad y \rightarrow -\infty,
  && \text{if $h = \infty$},
  \label{Eqn:Laplace:QP:d}
\end{align}
\ese
specifies the Neumann data,
\bes
\tnu(\alpha) = \py \tvarphi(\alpha,\tg(\alpha)) 
  - (K^T \Grada \tg(\alpha)) \cdot
  K^T \Grada \tvarphi(\alpha,\tg(\alpha)).
\ees
The Dirichlet--Neumann Operator is defined by
\bes
\tG(\tg): \txi(\alpha) \rightarrow \tnu(\alpha).
\ees
\end{Def}

%
%

\subsection{Transparent Boundary Conditions}
\label{Sec:TBC}

Regarding boundary conditions at the bottom of the fluid
domain we can state a rigorous formulation for periodic
waves which simultaneously
accounts for a fluid of \textit{any} depth, even infinite.
We begin with the case of a fluid of infinite depth
and select a value $a$ such that $-a < -\SupNorm{\tg}$.
Clearly, beneath the artificial boundary at $y=-a$ the 
bounded quasiperiodic solution of Laplace's equation is
\bes
\tvarphi(\alpha,y) = \sum_{p \in \Gamma'} \hat{\psi}_p 
  e^{\Abs{K^T p} (y+a)} e^{i p \cdot \alpha}.
\ees
From this we have
\bes
\tvarphi(\alpha,-a) = \sum_{p \in \Gamma'} \hat{\psi}_p
  e^{i p \cdot \alpha} =: \tpsi(\alpha).
\ees
Since
\bes
\py \tvarphi(\alpha,y) = \sum_{p \in \Gamma'}
  \hat{\psi}_p \Abs{K^T p} e^{\Abs{K^T p} (y+a)} 
  e^{i p \cdot \alpha},
\ees
we find
\bes
\py \tvarphi(\alpha,-a) = \sum_{p \in \Gamma'} \Abs{K^T p}
  \hat{\psi}_p e^{i p \cdot \alpha}
  =: \Abs{K^T D}[\tpsi(\alpha)],
\ees
which defines the order--one Fourier multiplier
$\Abs{K^T D}$. With this we can state the infinite depth 
transparent boundary condition, \eqref{Eqn:Laplace:QP:d}, as
\bes
\py \tvarphi - \Abs{K^T D}[\tvarphi] = 0,
\quad
y = -a.
\ees

Similarly, in finite depth, $h < \infty$, we choose
$a$ such that $-h < -a < -\SupNorm{\tg}$ so that
beneath the artificial boundary at $y = -a$ the solution
of Laplace's equation satisfying
\bes
\py \tvarphi = 0,
\quad
y = -h,
\ees
is
\bes
\tvarphi(\alpha,y) = \sum_{p \in \Gamma'} \hat{\psi}_p 
  \frac{\cosh(\Abs{K^T p} (h+y))}{\cosh(\Abs{K^T p} (h-a))} 
  e^{i p \cdot \alpha}.
\ees
With this we have
\bes
\tvarphi(\alpha,-a) = \sum_{p \in \Gamma'} \hat{\psi}_p
  e^{i p \cdot \alpha} =: \tpsi(\alpha),
\ees
and, since
\bes
\py \tvarphi(\alpha,y) = \sum_{p \in \Gamma'} \hat{\psi}_p
  \Abs{K^T p} 
  \frac{\sinh(\Abs{K^T p} (h+y))}{\cosh(\Abs{K^T p} (h-a))}
  e^{i p \cdot \alpha},
\ees
we find
\begin{align*}
\py \tvarphi(\alpha,-a) 
  & = \sum_{p \in \Gamma'} \Abs{K^T p}
    \tanh(\Abs{K^T p}(h-a)) \hat{\psi}_p e^{i p \cdot \alpha} \\
  & =: \Abs{K^T D} \tanh((h-a) \Abs{K^T D})[\tpsi(\alpha)],
\end{align*}
which defines the order--one Fourier multiplier
$\Abs{K^T D} \tanh((h-a) \Abs{K^T D})$.
Thus the transparent boundary condition in finite depth,
\eqref{Eqn:Laplace:QP:c}, reads
\bes
\py \tvarphi - \Abs{K^T D} \tanh((h-a) \Abs{K^T D}) [\varphi] = 0,
\quad
y = -a.
\ees

Therefore, if we define
\bes
\tT := \begin{cases} 
  \Abs{K^T D} \tanh((h-a) \Abs{K^T D}), & h < \infty, \\
  \Abs{K^T D}, & h = \infty, \end{cases}
\ees
then we have the uniform statement of the transparent
boundary conditions, \eqref{Eqn:Laplace:QP:c} \& 
\eqref{Eqn:Laplace:QP:d}, as
\be
\label{Eqn:TransBC:QP}
\py \tvarphi - \tT[\tvarphi] = 0,
\quad
y = -a.
\ee
With this we can (equivalently) restate the definition of the DNO.
\begin{Def}
Given Dirichlet data, $\txi(\alpha)$, the unique laterally periodic
solution of the boundary value problem
\bse
\label{Eqn:Laplace:Full:QP}
\begin{align}
& \Diva{ K K^T \Grada{ \tvarphi(\alpha,y) } } 
  + \py^2 \tvarphi(\alpha,y) = 0, 
  && \text{in $S_{h,\tg}$},
  \label{Eqn:Laplace:Full:QP:a} \\
& \tvarphi(\alpha,\tg(\alpha)) = \txi(\alpha),
  && y = \tg(\alpha), 
  \label{Eqn:Laplace:Full:QP:b} \\
& \py \tvarphi - \tT[ \tvarphi ] = 0, && y = -a, 
  \label{Eqn:Laplace:Full:QP:c} \\
& \tvarphi(\alpha+\gamma,y) = \tvarphi(\alpha,y),
  && \gamma \in \Gamma,
  \label{Eqn:Laplace:Full:QP:d}
\end{align}
\ese
specifies the Neumann data,
\be
\tnu(\alpha) = \py \tvarphi(\alpha,\tg(\alpha)) 
  - (K^T \Grada \tg(\alpha)) \cdot
  K^T \Grada \tvarphi(\alpha,\tg(\alpha)).
\ee
The Dirichlet--Neumann Operator is defined by
\be
\label{Eqn:DNO:QP}
\tG(\tg): \txi(\alpha) \rightarrow \tnu(\alpha).
\ee
\end{Def}

%
%

\section{Analyticity of the Dirichlet--Neumann Operator}
\label{Sec:Anal}

We now follow the approach of Nicholls \& Reitich
\cite{NichollsReitich99,NichollsReitich00b} and utilize
the Method of Transformed Field Expansions (TFE) to establish
the analyticity with respect to boundary deformation, $\tg$,
of not only the field, $\tvarphi$, but also the DNO, $\tG$.
The procedure is, by now, well--established and begins with
a domain--flattening change of variables. Once this has been
accomplished, the candidate solution is expanded in a Taylor
series in powers of the interfacial deformation, resulting
in a recurrence of (inhomogeneous) linearized problems to be
solved. These are recursively estimated to establish the
convergence of the Taylor series for the field which then
yields the analyticity of the DNO.

%
%

\subsection{Change of Variables}
\label{Sec:COV}

Consider the change of variables 
(known as the C--Method \cite{CMR80,CDCM82} in 
electromagnetics or $\sigma$--coordinates \cite{Phillips57}
in oceanography)
\bes
\alpha' = \alpha,
\quad
y' = a \left( \frac{y-\tg(\alpha)}{a+\tg(\alpha)}
  \right),
\ees
and the transformed field
\bes
\tu(\alpha',y') = \tvarphi \left( \alpha',
  \left( \frac{a+\tg(\alpha')}{a} \right) y' 
  + \tg(\alpha') \right),
\ees
which, upon dropping the primes for simplicity,
transforms \eqref{Eqn:Laplace:Full:QP} to
\bse
\label{Eqn:Laplace:Full:QP:COV}
\begin{align}
& \Diva{ K K^T \Grada{ \tu(\alpha,y) } } 
  + \py^2 \tu(\alpha,y) = \tF(\alpha,y;\tg,\tu), 
  && -a < y < 0, \\
& \tu(\alpha,0) = \txi(\alpha),
  && y = 0, \\
& \py \tu - \tT[ \tu ] = \tJ(\alpha;\tg,\tu),
  && y = -a, \\
& \tu(\alpha+\gamma,y) = \tu(\alpha,y),
  && \gamma \in \Gamma,
\end{align}
from which one can produce the Neumann data
\be
\tnu(\alpha) = \py \tu(\alpha,0) + \tL(\alpha;\tg, \tu).
\ee
\ese
The forms for $\tF$, $\tJ$, and $\tL$ are readily
derived and are all $\BigOh{\tg}$.

We now make the smallness assumption on $\tg$,
\bes
\tg(\alpha) = \Eps \tf(\alpha),
\quad
\Eps \ll 1,
\quad
\tf = \BigOh{1},
\ees
where previous results indicate that we will be able
to drop the size assumption on $\Eps$ provided that
it is \textit{real} \cite{NichollsReitich00b,NichollsTaber06}.
With this assumption we seek a solution of the form
\be
\tu = \tu(\alpha,y;\Eps) = \sumn \tu_n(\alpha,y) \Eps^n,
\ee
which, upon insertion into \eqref{Eqn:Laplace:Full:QP:COV},
delivers
\bse
\label{Eqn:Laplace:Full:QP:COV:n}
\begin{align}
& \Diva{ K K^T \Grada{ \tu_n(\alpha,y) } } 
  + \py^2 \tu_n(\alpha,y) = \tF_n(\alpha,y),
  && -a < y < 0, \\
& \tu_n(\alpha,0) = \delta_{n,0} \txi(\alpha),
  && y = 0, \\
& \py \tu_n - \tT[ \tu_n ] = \tilde{J}_n(\alpha),
  && y = -a, \\
& \tu_n(\alpha+\gamma,y) = \tu_n(\alpha,y),
  && \gamma \in \Gamma,
\end{align}
where $\delta_{n,m}$ is the Kronecker delta,
from which one can produce the Neumann data
\be
\label{Eqn:Laplace:Full:QP:COV:DNO:n:e}
\tnu_n(\alpha) = \py \tu_n(\alpha,0) + \tL_n(\alpha),
\ee
\ese
which gives $\tG_n(\tf)[\txi] = \tnu_n$. Here we have
\bse
\label{Eqn:Fn}
\be
\tF_n = \Diva{K \tF^{\alpha}_n} + \py \tF^y_n + \tF^0_n,
\label{Eqn:Fn:a}
\ee
where
\begin{align}
a^2 \tF^{\alpha}_n
  & = -2 a \tf (K^T \Grada{\tu_{n-1}})
    + a (a+y) (K^T \Grada{\tf}) \py \tu_{n-1} \notag \\
  & \quad
    - (\tf)^2 (K^T \Grada{\tu_{n-2}})
    + (a+y) \tf (K^T \Grada{\tf}) \py \tu_{n-2},
\label{Eqn:Fn:b}
\end{align}
and
\begin{align}
a^2 \tF^y_n
  & = a(a+y) (K^T \Grada{\tf}) \cdot (K^T \Grada{\tu_{n-1}})
  \notag \\
  & \quad 
    + (a+y) \tf (K^T \Grada{\tf}) \cdot (K^T \Grada{\tu_{n-2}})
    \notag \\
  & \quad 
    - (a+y)^2 \Abs{K^T \Grada{\tf}}^2 \py \tu_{n-2},
\label{Eqn:Fn:c}
\end{align}
and
\begin{align}
a^2 \tF^0_n
  & = a (K^T \Grada{\tf}) \cdot (K^T \Grada{\tu_{n-1}}) 
    + \tf (K^T \Grada{\tf}) \cdot (K^T \Grada{\tu_{n-2}})
    \notag \\
  & \quad
    - (a+y) \Abs{K^T \Grada{\tf}}^2 \py \tu_{n-2},
\label{Eqn:Fn:d}
\end{align}
and 
\be
\label{Eqn:Jn}
a \tJ_n = \tf \tT[\tu_{n-1}(\alpha, -a)],
\ee
and
\begin{align}
\label{Eqn:Ln}
a \tL_n & = - a (K^T \Grada{\tf}) \cdot
    (K^T \Grada{\tu_{n-1}(\alpha,0)})
    - \tf \tnu_{n-1} \notag \\
  & \quad
    - \tf (K^T \Grada{\tf}) \cdot
    (K^T \Grada{\tu_{n-2}(\alpha,0)})
    + a \Abs{K^T \Grada{\tf}}^2 \py \tu_{n-2}(\alpha,0).
\end{align}
\ese

We point out that \eqref{Eqn:Laplace:Full:QP:COV:DNO:n:e} and \eqref{Eqn:Ln}
give the following formula for the $n$--th correction of the DNO (Neumann data)
\begin{align}
\tG_n(\tf)[\txi] & = \py \tu_n(\alpha,0)
  - (K^T \Grada{\tf}) \cdot (K^T \Grada{\tu_{n-1}(\alpha,0)})
  - \frac{1}{a} \tf \tG_{n-1}(\tf)[\txi]
\notag \\ & \quad
  - \frac{1}{a} \tf (K^T \Grada{\tf}) \cdot 
  (K^T \Grada{\tu_{n-2}(\alpha,0)})
  + \Abs{K^T \Grada{\tf}}^2 \py \tu_{n-2}(\alpha,0).
\label{Eqn:tG:COV:n}
\end{align}

%
%

\subsection{Function Spaces}
\label{Sec:FcnSpaces}

We now establish the analyticity of the transformed field, $\tu$, and DNO, $\tG$,
under quasiperiodic boundary conditions. Our proof follows Nicholls \& Reitich
\cite{NichollsReitich99}, with extra attention given to handling the small
divisors and the derivatives along the quasiperiodic direction in the estimates.
We begin by defining the Fourier multipliers $\AKD^q$,
\begin{align*}
& \AKD^q \tilde{\psi}(\alpha) := \sump \AKp^q \hat{\psi}_p e^{i p \cdot \alpha},
&& q \in \Real, \\
& \AKD^q \tilde{w}(\alpha,y) := \sump \AKp^q \hat{w}_p(y) e^{i p \cdot \alpha},
&& q \in \Real.
\end{align*}
Next, we recall the classical interfacial Sobolev spaces
\bes
H^s(P(\Gamma)) = \left\{ \txi(\alpha) \in L^2(P(\Gamma))\ |\ 
  \SobNorm{\txi}{s} < \infty \right\},
\ees
where
\bes
\SobNorm{\txi}{s}^2 
  = \sum_{p \in \Gamma'} \Angle{p}^{2 s} \Abs{\hat{\xi}_p}^2
  = \Norm{\Angle{D}^s \txi}{L^2(P(\Gamma))}^2,
\qquad 
\Angle{p}^s = (1+|p|^2)^{s/2}.
\ees
and volumetric spaces
\bes
X^s(\Omega) = \left\{ \tu(\alpha,y) \in L^2(\Omega)\ |\
  \XNorm{\tu}{s} < \infty \right\}, 
\ees
on
\bes
\Omega := P(\Gamma) \times (-a,0),
\ees
where
\bes
\XNorm{\tu}{s}^2 
  = \sum_{p \in \Gamma'} \Angle{p}^{2s}
  \Norm{\hat{u}_p(y)}{L^2((-a,0))}^2
  = \Norm{\Angle{D}^s \tu}{L^2(\Omega)}^2.
\ees

\begin{Rk}
\label{Rk:AKD:Mapping}
With these definitions we can illustrate a curious, but crucial,
property of the operators $\AKD^q$. If we consider the space of zero--mean
functions
\bes
H_0^s(P(\Gamma)) = \left\{ \txi(\alpha) \in H^s(P(\Gamma))\ |\ 
  \hat{\xi}_0 = 0 \right\},
\ees
then a direct application of the Poincar\'{e} inequality yields
\bes
\label{Eqn:Ap:Estimate}
\Norm{\txi}{s+1} \leq C \Norm{\Grada{\txi}}{s}. 
\ees
However, an estimate of \eqref{Eqn:Ap:Estimate} no longer holds if we replace $\Grada$ by $\AKD$.
Instead one typically settles for a 
``non--resonance'' condition of the form \cite{Moser66}
\be
\label{Eqn:AKp:Estimate}
\gamma \Angle{p}^{-r} < \Abs{K^T p},
\quad
\Abs{p} \geq 1,
\ee
for some $\gamma, r > 0$. With this we can only realize
\begin{align*}
\Norm{\txi}{s - r}^2 
  & = \sump \Angle{p}^{2s - 2 r} \Abs{\hat{\xi}_p}^2
    \leq \sump \gamma^{-2 } \Abs{K^T p}^{2 } \Angle{p}^{2s} 
    \Abs{\hat{\xi}_p}^2 \\
  & = \sump \gamma^{-2 } \Angle{p}^{2s} \Abs{\AKp\hat{\xi}_p}^2
    = \gamma^{-2 } \Norm{\AKD\txi}{s}^2,
\end{align*}
so that
\bes
\AKD \txi \in H_0^s
\quad
\implies
\quad
\txi \in H_0^{s-r}.
\ees
In particular, $\AKD \txi \in H^s$ does \textit{not} imply $\xi \in H_0^{s+1}$
as one might expect.
\end{Rk}

With the definitions of $H^s$, $X^s$, and their norms we can state
and prove the following important Algebra Property 
\cite{Folland76,NichollsReitich99,Evans10}.
\begin{Lemma}
\label{Lemma:Algebra:QP}
Given an integer $s > d/2$ there exists a constant $M = M(s)$ such that
all of the following estimates are true.
\bse
\label{Eqn:Algebra:QP}
\begin{itemize}
\item If $\tf \in H^s(P(\Gamma))$ and $\txi \in H^s(P(\Gamma))$ then
  \be
  \label{Eqn:Algebra:QP:a}
  \SobNorm{\tf \txi}{s} \leq M \SobNorm{\tf}{s} \SobNorm{\txi}{s}.
  \ee
\item If $\tf \in H^s(P(\Gamma))$ and $\tu \in X^s(\Omega)$ then
  \be
  \label{Eqn:Algebra:QP:b}
  \XNorm{\tf \tu}{s} \leq M \SobNorm{\tf}{s} \XNorm{\tu}{s}.
  \ee
\item If $\tg \in H^{s+1/2}(P(\Gamma))$, $\tpsi \in H^s(P(\Gamma))$,
  and $\AKD^{1/2} \tpsi \in H^s(P(\Gamma))$ then
  \be
  \label{Eqn:Algebra:QP:c}
  \SobNorm{\AKD^{1/2}[\tg \tpsi]}{s} \leq M \SobNorm{\tg}{s+1/2}
    \left\{ \SobNorm{\tpsi}{s} + \SobNorm{\AKD^{1/2} \tpsi}{s} \right\}.
  \ee
\item If $\tg \in H^{s+1/2}(P(\Gamma))$, $\tv \in X^s(\Omega)$,
  and $\AKD^{1/2} \tv \in X^s(\Omega)$ then
  \be
  \label{Eqn:Algebra:QP:d}
  \XNorm{\AKD^{1/2}[\tg \tv]}{s} \leq M \SobNorm{\tg}{s+1/2}
    \left\{ \XNorm{\tv}{s} + \XNorm{\AKD^{1/2} \tv}{s} \right\}.
  \ee
\end{itemize}
\ese
\end{Lemma}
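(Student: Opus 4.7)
My plan is to treat the four estimates in increasing order of difficulty, peeling off the classical Sobolev algebra content first and isolating the quasiperiodic subtlety in the last two items. For \eqref{Eqn:Algebra:QP:a}, I would expand $\widehat{(\tf\txi)}_p = \sum_q \hat{f}_{p-q}\hat{\xi}_q$, apply the Peetre inequality $\Angle{p}^s \leq 2^s(\Angle{p-q}^s + \Angle{q}^s)$, and bound each of the two resulting convolutions by Cauchy--Schwarz in the convolution variable, using $s > d/2$ to guarantee $\sum_p \Angle{p}^{-2s} < \infty$. Estimate \eqref{Eqn:Algebra:QP:b} then reduces to \eqref{Eqn:Algebra:QP:a} by fixing $y$: since $\tf$ is $y$--independent, \eqref{Eqn:Algebra:QP:a} yields $\SobNorm{(\tf\tu)(\cdot,y)}{s} \leq M \SobNorm{\tf}{s} \SobNorm{\tu(\cdot,y)}{s}$ for a.e.\ $y$, which I would square and integrate in $y$ over $(-a,0)$, invoking the identity $\XNorm{\tu}{s}^2 = \int_{-a}^0 \SobNorm{\tu(\cdot,y)}{s}^2 \dy$.

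The substance of the lemma lies in \eqref{Eqn:Algebra:QP:c} and \eqref{Eqn:Algebra:QP:d}. My key observation is that $p \mapsto \AKp^{1/2}$ is subadditive, since $p \mapsto \AKp$ is a seminorm on $\Real^d$ and $\sqrt{\cdot}$ is subadditive on $[0,\infty)$; hence
\bes
\AKp^{1/2} \leq \Abs{K^T(p-q)}^{1/2} + \Abs{K^T q}^{1/2},
\quad p,q \in \Gamma'.
\ees
Inserting this pointwise bound into the convolution representation of $\widehat{\tg\tpsi}_p$ lets me majorize $\AKp^{1/2} \Abs{\widehat{\tg\tpsi}_p}$ by a sum of two convolutions of non--negative sequences: $\{\Abs{K^T p}^{1/2}\Abs{\hat{g}_p}\} \ast \{\Abs{\hat{\psi}_q}\}$ and $\{\Abs{\hat{g}_p}\} \ast \{\Abs{K^T q}^{1/2}\Abs{\hat{\psi}_q}\}$. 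I would then introduce auxiliary real functions with exactly these non--negative Fourier coefficients (whose $H^s$ norms coincide with the corresponding weighted $\ell^2$ norms of the sequences) and invoke \eqref{Eqn:Algebra:QP:a} to estimate each product.

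To convert these bounds into the statement of \eqref{Eqn:Algebra:QP:c}, I would use the elementary inequality $\AKp \leq C\Angle{p}$, with $C$ depending on $K$, to trade $\AKp^{1/2}$ for $\Angle{p}^{1/2}$ at the cost of raising the index on $\tg$ from $s$ to $s+1/2$; the second convolution already has the desired form since $\Abs{K^T q}^{1/2}\Abs{\hat{\psi}_q}$ is the modulus of the $q$--th Fourier coefficient of $\AKD^{1/2}\tpsi$. Absorbing the harmless bound $\SobNorm{\tg}{s} \leq \SobNorm{\tg}{s+1/2}$ finishes \eqref{Eqn:Algebra:QP:c}. For \eqref{Eqn:Algebra:QP:d}, the Fourier argument is identical slice--by--slice in $y$, and I would then integrate in $y$ exactly as in the passage from \eqref{Eqn:Algebra:QP:a} to \eqref{Eqn:Algebra:QP:b}, replacing $\SobNorm{\tpsi}{s}$ and $\SobNorm{\AKD^{1/2}\tpsi}{s}$ by $\XNorm{\tv}{s}$ and $\XNorm{\AKD^{1/2}\tv}{s}$ respectively.

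The main obstacle, and the source of the asymmetry in the statement, is the small--divisor phenomenon highlighted in Remark~\ref{Rk:AKD:Mapping}: no reverse inequality $\Angle{p}^{1/2} \leq C\AKp^{1/2}$ is available without paying derivatives, so the extra half--derivative absorbed into $\tg$ cannot be redistributed onto $\tpsi$ or $\tv$. The argument above is designed to require only the forward bound $\AKp^{1/2} \leq C\Angle{p}^{1/2}$, so no Diophantine or non--resonance hypothesis on $K$ enters the algebra estimates themselves; those constraints will be felt only later when the elliptic operator of \S~\ref{Sec:EllEst} must be inverted.
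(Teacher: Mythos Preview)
Your proposal is correct and follows essentially the same route as the paper. Both arguments hinge on the subadditivity $\AKp^{1/2}\le\Abs{K^T(p-q)}^{1/2}+\Abs{K^Tq}^{1/2}$, the Peetre-type splitting of $\Angle{p}^s$, and the one--sided bound $\AKp\le C\Angle{p}$ to place the extra half--derivative on $\tg$; the only cosmetic difference is that you reduce \eqref{Eqn:Algebra:QP:c} modularly to \eqref{Eqn:Algebra:QP:a} via auxiliary functions with non--negative Fourier coefficients, whereas the paper expands all four cross terms and applies Young's $\ell^1\ast\ell^2$ inequality directly to each.
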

\begin{proof}
The proofs of \eqref{Eqn:Algebra:QP:a} and \eqref{Eqn:Algebra:QP:b}
are standard \cite{Evans10}, while the proof of \eqref{Eqn:Algebra:QP:d}
is similar to that of \eqref{Eqn:Algebra:QP:c}. Therefore we now work to
establish \eqref{Eqn:Algebra:QP:c} and begin with
\begin{align*}
\SobNorm{\AKD^{1/2} [\tg \tpsi]}{s}^2 
  & \leq \sum_{q \in \Gamma'} \Angle{q}^{2s} \Abs{K^T q}
    \left\{ \sum_{p \in\Gamma'} \Abs{\hat{g}_{q-p}} 
    \Abs{\hat{\psi}_p} \right\}^2 \\
  & = \sum_{q \in \Gamma'} 
    \left\{ \sum_{p \in \Gamma'} \Angle{q}^s \Abs{K^T q}^{1/2}
    \Abs{\hat{g}_{q-p}} \Abs{\hat{\psi}_p} \right\}^2
    \\
  & \leq C \sum_{q \in \Gamma'} \left\{ \sum_{p \in \Gamma'}  
    \left( \Angle{q-p}^s + \Angle{p}^s \right)
\right. \\ & \left. \quad
    \left( \Abs{K^T (q-p)}^{1/2} + \Abs{K^T p}^{1/2} \right)
    \Abs{\hat{g}_{q-p}} \Abs{\hat{\psi}_p} \right\}^2 \\
  & \leq C C' \sum_{q \in \Gamma'} \left\{ \sum_{p \in \Gamma'}  
    \left( \Angle{q-p}^s + \Angle{p}^s  \right)
\right. \\ & \quad \left.
    \left( \Angle{q-p}^{1/2} + \AKp^{1/2} \right) \Abs{\hat{g}_{q-p}}
    \Abs{\hat{\psi}_p} \right\}^2.
\end{align*}
Using the Young's convolution inequality
\bes
\Norm{f_1*f_2}{\ell^2} \leq \Norm{f_1}{\ell^1} \Norm{f_2}{\ell^2},
\ees
we can show that
\bes
\left\{ \sum_{q \in \Gamma'} \left( \sum_{p \in \Gamma'}  
    \Angle{q-p}^{s+1/2} \Abs{\hat{g}_{q-p}}
    \Abs{\hat{\psi}_p} \right)^2 \right\}^{1/2}
  \leq \SobNorm{\tg}{s+1/2} \sum_{p\in\Gamma'}\Abs{\hat{\psi_p}}
  \leq C \SobNorm{\tg}{s+1/2} \SobNorm{\tpsi}{s}.
\ees
Similarly, we can also prove that
\begin{gather*}
\left\{ \sum_{q \in \Gamma'} \left( \sum_{p \in \Gamma'}  
  \Angle{q-p}^s \Abs{\hat{g}_{q-p}}
  \Abs{K^T p}^{1/2} \Abs{\hat{\psi}_p} \right)^2 \right\}^{1/2}
  \leq C \SobNorm{\tg}{s} \SobNorm{\Abs{K^T D}^{1/2} \tpsi}{s},
  \\
\left\{ \sum_{q \in \Gamma'} \left( \sum_{p \in \Gamma'}
  \Angle{q-p}^{1/2} \Abs{\hat{g}_{q-p}}
  \Angle{p}^s \Abs{\hat{\psi}_p} \right)^2 \right\}^{1/2}
  \leq C \SobNorm{\tg}{s+1/2} \SobNorm{\tpsi}{s},
  \\
\left\{ \sum_{q \in \Gamma'} \left( \sum_{p \in \Gamma'}
  \Abs{\hat{g}_{q-p}} \Angle{p}^s
  \Abs{K^T p}^{1/2} \Abs{\hat{\psi}_p} \right)^2 \right\}^{1/2}
  \leq \SobNorm{\tg}{s} \SobNorm{\Abs{K^T D}^{1/2} \tpsi}{s}.
\end{gather*}
Therefore we have
\bes
\SobNorm{\AKD^{1/2} [\tg \tpsi]}{s} 
  \leq M \SobNorm{\tg}{s+1/2} \left(
  \SobNorm{\tpsi}{s} + \SobNorm{\AKD^{1/2} \tpsi}{s}
  \right).
\ees
\end{proof}

\begin{Rk}
\label{Rk:EllEst:Explain}
At this point we observe how our current theory will differ
in an important way from the periodic case explored in
Nicholls \& Reitich \cite{NichollsReitich00a}. In the latter
we were able to establish
\bes
\SobNorm{\Abs{D}^{1/2} [\tg \tpsi]}{s} 
  \leq M \SobNorm{\tg}{s+1/2} \left(
  \SobNorm{\tpsi}{s} + \SobNorm{\Abs{D}^{1/2} \tpsi}{s}
  \right)
  \leq 2 M \SobNorm{\tg}{s+1/2} 
  \SobNorm{\Abs{D}^{1/2} \tpsi}{s},
\ees
c.f.\ Remark~\ref{Rk:AKD:Mapping}. By constrast, we \textit{cannot} bound
$\SobNorm{\tpsi}{s}$ by $\SobNorm{\AKD^{1/2} \tpsi}{s}$ 
(see Remark~\ref{Rk:AKD:Mapping})
which will necessitate a more powerful elliptic estimate
(Theorem~\ref{Thm:EllEst}) that controls not only
\bes
\left\{ \AKD^{1/2} \tu, \AKD^{1/2} \py \tu, 
  \AKD^{1/2} K^T \Grada{\tu} \right\}
\ees
but also
\bes
\left\{ \tu, \py \tu, K^T \Grada{\tu} \right\}.
\ees
\end{Rk}

From this it is straightforward to establish the following.
\begin{Cor}
Given an integer $s > d/2$ there exists a constant $M = M(s)$
such that all of the following estimates are true.
\bse
\begin{itemize}
\item If $\tg, \tilde h \in H^{s+1/2}(P(\Gamma))$, 
  $\tpsi \in H^s(P(\Gamma))$, and 
  $\AKD^{1/2} \tpsi \in H^s(P(\Gamma))$ then
  \be
  \label{Eqn:Algebra:QP:e}
  \SobNorm{\AKD^{1/2}[\tg \tilde h \tpsi]}{s} \leq 
    M^2 \SobNorm{\tg}{s+1/2} \SobNorm{\th}{s+1/2}
    \left\{ \SobNorm{\tpsi}{s} + \SobNorm{\AKD^{1/2} \tpsi}{s}
    \right\}.
  \ee
\item If $\tg, \th \in H^{s+1/2}(P(\Gamma))$, 
  $\tv \in X^s(\Omega)$, and $\AKD^{1/2} \tv \in X^s(\Omega)$ then
  \be
  \label{Eqn:Algebra:QP:f}
  \XNorm{\AKD^{1/2}[\tg \th \tv]}{s} \leq M^2 \SobNorm{\tg}{s+1/2}
    \SobNorm{\th}{s+1/2}
    \left\{\XNorm{\tv}{s} + \XNorm{\AKD^{1/2} \tv}{s} \right\}.
  \ee
\end{itemize}
\ese
\end{Cor}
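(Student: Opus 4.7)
The plan is to bootstrap the previous lemma by applying it twice, viewing the triple product as a binary product in two different ways depending on which factor we peel off first. Since $\tg \tilde h \tpsi = \tg \cdot (\tilde h \tpsi)$, a direct invocation of estimate (4.7c) with $\tilde h \tpsi$ in the role of $\tpsi$ reduces the problem to controlling $\SobNorm{\tilde h \tpsi}{s}$ and $\SobNorm{\AKD^{1/2}[\tilde h \tpsi]}{s}$, and both of these are furnished by the lemma itself.

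In more detail, the first step applies (4.7c) to the pair $(\tg, \tilde h \tpsi)$ to obtain
\[
\SobNorm{\AKD^{1/2}[\tg \tilde h \tpsi]}{s}
\leq M \SobNorm{\tg}{s+1/2}
\bigl\{ \SobNorm{\tilde h \tpsi}{s} + \SobNorm{\AKD^{1/2}[\tilde h \tpsi]}{s} \bigr\}.
\]
The second step estimates the two terms in braces separately: the algebra property (4.7a) gives $\SobNorm{\tilde h \tpsi}{s} \leq M \SobNorm{\tilde h}{s} \SobNorm{\tpsi}{s}$, and applying (4.7c) a second time, now to the pair $(\tilde h, \tpsi)$, yields $\SobNorm{\AKD^{1/2}[\tilde h \tpsi]}{s} \leq M \SobNorm{\tilde h}{s+1/2} \{\SobNorm{\tpsi}{s} + \SobNorm{\AKD^{1/2}\tpsi}{s}\}$. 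Using the trivial embedding $\SobNorm{\tilde h}{s} \leq \SobNorm{\tilde h}{s+1/2}$ and combining gives the desired bound on $\SobNorm{\AKD^{1/2}[\tg \tilde h \tpsi]}{s}$, with a clean constant $M^2$ obtained by absorbing an irrelevant multiplicative factor into the definition of $M$ from Lemma~\ref{Lemma:Algebra:QP} (its value was never claimed sharp).

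The argument for the volumetric estimate (4.8f) is identical in structure, with (4.7a) and (4.7c) replaced throughout by their volumetric counterparts (4.7b) and (4.7d): write $\tg \tilde h \tv = \tg \cdot (\tilde h \tv)$, apply (4.7d) to the outer product, then use (4.7b) and (4.7d) to bound $\XNorm{\tilde h \tv}{s}$ and $\XNorm{\AKD^{1/2}[\tilde h \tv]}{s}$ respectively.

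There is no real obstacle here; the corollary is a routine iteration. The only subtle point is cosmetic, namely matching the exact constant $M^2$: the naive combination produces a bound of the form $2M^2$, which is absorbed into a (possibly larger but still universal) choice of $M$ depending only on $s$. Crucially, the argument never needs to invert $\AKD^{1/2}$ on the middle factor $\tilde h$, so the non-resonance phenomenon emphasized in Remark~\ref{Rk:AKD:Mapping} is not triggered at this stage.
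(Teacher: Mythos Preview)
Your proposal is correct and is precisely the ``straightforward'' iteration of Lemma~\ref{Lemma:Algebra:QP} that the paper alludes to without writing out; the paper gives no explicit proof beyond the phrase ``From this it is straightforward to establish the following.'' Your handling of the constant (absorbing the harmless factor of $2$ into the non-sharp $M$) is also exactly what is intended.
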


Finally, we state the following, readily proven, result for later use.
\begin{Lemma}
There exists a constant $Y = Y(s)$ such that all of the following are true
\bse
\begin{itemize}
\item If $\tv \in X^s(\Omega)$ then
  \be
  \label{Eqn:Y:a}
  \XNorm{(a+y) \tv}{s} \leq Y(s) \XNorm{\tv}{s}.
  \ee
\item If $\AKD^{1/2} \tv \in X^s(\Omega)$ then
  \be
  \label{Eqn:Y:b}
  \XNorm{\AKD^{1/2} [(a+y) \tv]}{s} \leq Y(s) \XNorm{\AKD^{1/2} \tv}{s}.
  \ee
\end{itemize}
\ese
\end{Lemma}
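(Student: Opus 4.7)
The key observation is that $(a+y)$ is a function of $y$ alone, bounded pointwise by $a$ on the interval $(-a,0)$, while $\AKD^{1/2}$ acts only in the $\alpha$ variable. The result therefore reduces to an easy pointwise bound on each Fourier mode, uniformly in $p$, together with the commutation of $\AKD^{1/2}$ with multiplication by a function of $y$.

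For \eqref{Eqn:Y:a}, the plan is to expand $\tv$ in its Fourier series in $\alpha$,
\bes
\tv(\alpha,y) = \sum_{p \in \Gamma'} \hat{v}_p(y) e^{i p \cdot \alpha},
\ees
so that the $p$--th Fourier coefficient of $(a+y)\tv$ is simply $(a+y) \hat{v}_p(y)$. Since $0 \le a+y \le a$ on $(-a,0)$, we obtain
\bes
\Norm{(a+y) \hat{v}_p(y)}{L^2((-a,0))}^2 = \int_{-a}^0 (a+y)^2 \Abs{\hat{v}_p(y)}^2 \dy \le a^2 \Norm{\hat{v}_p(y)}{L^2((-a,0))}^2.
\ees
Multiplying by $\Angle{p}^{2s}$ and summing over $p \in \Gamma'$ then gives \eqref{Eqn:Y:a} with $Y(s) = a$ (independent of $s$, in fact).

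For \eqref{Eqn:Y:b}, since $\AKD^{1/2}$ is a Fourier multiplier in $\alpha$ with symbol $\Abs{K^T p}^{1/2}$ depending only on $p$, and $(a+y)$ depends only on $y$, the two operators commute:
\bes
\AKD^{1/2}[(a+y) \tv] = (a+y) \AKD^{1/2} \tv.
\ees
Applying \eqref{Eqn:Y:a} with $\tv$ replaced by $\AKD^{1/2} \tv$ then yields \eqref{Eqn:Y:b} with the same constant $Y(s) = a$.

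There is no real obstacle here; the only thing to be careful about is checking the commutation in part (b), which is immediate from the mode--by--mode action of $\AKD^{1/2}$.
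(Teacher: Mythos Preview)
Your proof is correct and is precisely the elementary argument the authors have in mind; the paper itself does not give a proof, stating only that the result is ``readily proven.'' Your observation that $\AKD^{1/2}$ commutes with multiplication by $(a+y)$ and that $\Abs{a+y}\le a$ on $(-a,0)$ is exactly what is needed, and the constant $Y=a$ you obtain is indeed independent of $s$.
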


%
%

\subsection{Elliptic Estimate}
\label{Sec:EllEst}

We now state the elliptic estimate we require concerning the
the generic boundary value problem,
\bse
\label{Eqn:BVP:QP}
\begin{align}
& \Diva{ K K^T \Grada{\tu(\alpha,y)} } + \py^2 \tu(\alpha,y) 
  = \tF(\alpha,y), && -a < y < 0, \\
& \tu(\alpha,0) = \txi(\alpha), && \\
& \py \tu(\alpha,-a) - \tT[\tu(\alpha,-a)] = \tJ(\alpha), &&
\end{align}
where
\be
\tF(\alpha,y) := \Diva{ K \tFa(\alpha,y) } + \py \tF^y(\alpha,y) 
  + \tF^0(\alpha,y).
\ee
\ese
For convenience we define the maximum of a number of quantities
we must control in our estimation.
\begin{Def}
Given an integer $s \geq 0$ we define the following maximum
\begin{align*}
\cM_s[\tu] := \max \left\{ \right.
  &\XNorm{\tu}{s},
  \XNorm{\py \tu}{s},
  \XNorm{K^T \Grada{\tu}}{s},
  \\ 
  &\XNorm{\AKD^{1/2} \tu}{s},
  \XNorm{\AKD^{1/2} \py \tu}{s},
  \XNorm{\AKD^{1/2} K^T \Grada{\tu}}{s},
 \\ 
  &\left. \SobNorm{\py \tu(\alpha,0)}{s},
  \SobNorm{K^T \Grada{\tu}(\alpha,0)}{s},
  \SobNorm{\tT[\tu(\alpha,-a)]}{s} \right\}.
\end{align*}
\end{Def}

\begin{Rk}
As we shall see later in Section~\ref{Sec:Recur}, our analyticity result
does not strictly require
estimates of $\XNorm{\tu}{s}$ and $\XNorm{\AKD^{1/2} \tu}{s}$. However,
as their inclusion requires no additional effort, we include them
here for completeness.
\end{Rk}

The elliptic estimate, proven in Appendix~\ref{Sec:EllEst:Proof},
can now be stated as follows.
\begin{Thm}
\label{Thm:EllEst}
Given an integer $s \geq 0$, provided that
\begin{gather*}
\txi \in H^{s+1}(P(\Gamma)),
\quad
\tJ \in H^s(P(\Gamma)),
\\
\tF^j, \AKD^{1/2} \tF^j \in X^s(\Omega),
\quad
\tF^j(\alpha,0) \in H^s(P(\Gamma)),
\quad
j \in \{ \alpha, y, 0 \},
\end{gather*}
and
\bes
\tF^y(\alpha,-a) = 0,
\ees
there exists a unique solution $\tu \in X^s(\Omega)$ of \eqref{Eqn:BVP:QP}
such that, for some $C_e > 0$,
\begin{multline*}
\cM_s[\tu] \leq C_e \left\{ \SobNorm{\txi}{s+1}
  + \SobNorm{\tJ}{s} 
\right. \\ \left.
  + \sum_{j \in \{ \alpha, y, 0 \}}
  \left( \XNorm{\tF^j}{s}
  + \XNorm{\AKD^{1/2} \tF^j}{s} 
  + \SobNorm{ \tF^j(\alpha,0) }{s} \right)
  \right\}.
\end{multline*}
\end{Thm}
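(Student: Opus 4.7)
The plan is to Fourier decompose in $\alpha$ and analyze the boundary value problem one mode at a time. Writing $\tu(\alpha,y) = \sump \hat{u}_p(y) e^{i p \cdot \alpha}$ (and analogously for $\txi$, $\tJ$, and the three source components), the problem \eqref{Eqn:BVP:QP} reduces to the family of two-point BVPs
\[
\hat{u}_p''(y) - \AKp^2 \hat{u}_p(y) = i(K^T p)\cdot \hat{F}^\alpha_p(y) + \py \hat{F}^y_p(y) + \hat{F}^0_p(y), \qquad -a < y < 0,
\]
together with $\hat{u}_p(0) = \hat{\xi}_p$ and $\hat{u}_p'(-a) - \tT_p \hat{u}_p(-a) = \hat{J}_p$, where $\tT_p \geq 0$ is the symbol of $\tT$. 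Since the rows of $K$ are independent over $\Integer$, $\AKp > 0$ for every $p \neq 0$, and the sole degenerate mode $p=0$ reduces to $\hat{u}_0'' = \py \hat{F}_0^y + \hat{F}_0^0$ (the $\alpha$-divergence contribution vanishes at $p=0$), which is dispatched by direct integration, crucially using $\tF^y(\alpha,-a) = 0$ to absorb the endpoint contribution at $y=-a$.

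For the nonzero modes I would lift the Dirichlet datum by setting $\hat{u}_p = \hat{w}_p + \hat{\xi}_p \rho_p(y)$ with $\rho_p(y) = \cosh(\AKp(y+a))/\cosh(\AKp a)$; this solves the homogeneous ODE and satisfies $\rho_p(0)=1$, $\rho_p'(-a)=0$, so $\hat{w}_p$ inherits homogeneous Dirichlet data at $y=0$ and a transparent BC at $y=-a$ modified only by an exponentially-controlled $\hat{\xi}_p$ term. Multiplying the equation for $\hat{w}_p$ by $\overline{\hat{w}_p}$ and integrating by parts in $y$ over $(-a,0)$ yields an energy identity whose $y=-a$ boundary contribution produces the favorably-signed $\tT_p |\hat{w}_p(-a)|^2$, while the $y=0$ boundary term vanishes since $\hat{w}_p(0)=0$. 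On the source side, the $\py \hat{F}^y_p$ term is integrated by parts: both endpoint contributions drop out, the one at $y=-a$ by the hypothesis $\tF^y(\alpha,-a)=0$ and the one at $y=0$ because $\hat{w}_p(0)=0$. The remaining $i(K^T p)\cdot \hat{F}^\alpha_p$ contribution is absorbed into $\AKp \|\hat{w}_p\|_{L^2_y}$ by Cauchy--Schwarz, and $\hat{F}^0_p$ into $\|\hat{w}_p\|_{L^2_y}$ via Poincar\'e (using $\hat{w}_p(0)=0$), delivering a mode-wise bound
\[
\|\hat{u}_p'\|_{L^2_y}^2 + \AKp^2 \|\hat{u}_p\|_{L^2_y}^2 + \tT_p |\hat{u}_p(-a)|^2 \leq C\big(|\hat{\xi}_p|^2(1+\AKp) + |\hat{J}_p|^2 + \textstyle\sum_j \|\hat{F}^j_p\|_{L^2_y}^2\big).
\]

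To assemble the estimate for $\cM_s$ I would multiply the mode-wise inequality by $\Angle{p}^{2s}(1+\AKp)$ and sum over $p \in \Gamma'$: the weight $\Angle{p}^{2s}$ promotes everything to the $X^s$/$H^s$ level, while the factor $(1+\AKp)$ simultaneously produces the $\AKD^{1/2}$ entries of $\cM_s$ (since $\AKp\cdot|\hat{w}_p|^2 = |\AKp^{1/2} \hat{w}_p|^2$). The $y=0$ traces $\SobNorm{\py\tu(\cdot,0)}{s}$ and $\SobNorm{K^T\Grada \tu(\cdot,0)}{s}$ are then recovered from the fundamental theorem of calculus in $y$, substituting the ODE for $\hat{u}_p''$; this is precisely where the hypothesis $\tF^j(\alpha,0) \in H^s$ enters, via the trace of $\py \hat{F}^y_p$. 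The trace norm $\SobNorm{\tT[\tu(\cdot,-a)]}{s}$ is already contained in the weighted energy estimate through the favorable boundary term, since $\tT_p \leq \AKp$. The principal obstacle, by contrast with the purely periodic case of Nicholls \& Reitich, is that one \emph{cannot} use $\XNorm{\AKD^{1/2}\tu}{s}$ to dominate $\XNorm{\tu}{s}$ without invoking the non-resonance condition of Remark~\ref{Rk:AKD:Mapping}; this is avoided by tracking both norms independently, which is precisely why both $\XNorm{\tF^j}{s}$ and $\XNorm{\AKD^{1/2}\tF^j}{s}$ appear on the right-hand side, and why the summation weight is $(1+\AKp)$ rather than $\AKp$ alone.
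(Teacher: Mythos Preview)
Your approach is genuinely different from the paper's: where the paper writes down the explicit Green's function for the mode-$p$ two-point problem (the functions $E_p$, $S_p$, $C_p$ and the operators $I_1,\dots,I_4$) and estimates every $L^2$ norm by direct computation, you argue variationally via an energy identity. The energy route is more conceptual and would generalize better to non-constant-coefficient perturbations, while the paper's explicit computation gives sharper mode-wise decay (e.g.\ $\AKp^m\|S_p\|_{L^2}^2 \leq C$ for $0\le m\le 3$) that the single energy inequality does not see.

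There is, however, a concrete gap in your assembly step. Your mode-wise bound has $|\hat{J}_p|^2$ on the right, and you then propose to multiply by $\Angle{p}^{2s}(1+\AKp)$ and sum. This produces $\sum_p \Angle{p}^{2s}\AKp\,|\hat{J}_p|^2$, which is $\SobNorm{\AKD^{1/2}\tJ}{s}^2$ and is \emph{not} controlled by the hypothesis $\tJ\in H^s$; since $\AKp$ is comparable to $|p|$ for large $p$, you would be asking for half a derivative too many on $\tJ$. The paper avoids this because its explicit formula shows that the $\hat{J}_p$-contribution to $\py\hat{u}_p$ carries the factor $C_p(y)$ with $\AKp\|C_p\|_{L^2}^2\le C$, a decay the unweighted energy inequality cannot detect. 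The fix within your framework is not to multiply the fixed inequality by $(1+\AKp)$ but to rerun the energy argument with the test function $\AKp\,\overline{\hat{w}_p}$: the boundary term at $y=-a$ is then $\AKp\,\tT_p|\hat{w}_p(-a)|^2=\AKp^2|\hat{w}_p(-a)|^2$ (infinite depth), and the cross term $\AKp\,\hat{J}_p\,\overline{\hat{w}_p(-a)}$ is absorbed there by Young's inequality, yielding only $|\hat{J}_p|^2$ on the right with no extraneous $\AKp$. A similar care is needed for the trace $\SobNorm{\py\tu(\cdot,0)}{s}$, where your reference to ``the fundamental theorem of calculus in $y$, substituting the ODE'' is too vague: substituting $\hat{u}_p''$ reintroduces $\py\hat{F}^y_p$, and closing requires either a Rellich-type multiplier such as $(y+a)\overline{\hat{w}_p'}$ or the auxiliary variable $\hat{v}_p=\hat{u}_p'-\hat{F}^y_p$ together with the corrected $\AKp$-weighted estimate above.
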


\begin{Rk}
To illustrate the advance we have made in the current contribution,
we point out the difference between the elliptic estimates in the 
periodic and quasiperiodic cases. In the (zero--mean) periodic case
\cite{NichollsReitich00a},
\bes
\Laplacianx{u} + \py^2 u = F
\quad
\implies
\quad
\hat{u}_p(y) = \left[ \py^2 + \Abs{p}^2 \right]^{-1} \hat{F}_p
\quad
\implies
\quad
\XNorm{u}{s+2} \leq C_e \XNorm{F}{s},
\ees
i.e., $u$ is two more orders regular than $F$. However, the 
same is \textit{not} true in the (zero--mean) quasiperiodic case where
\bes
\Diva{ K K^T \Grada{\tu} } + \py^2 u = F
\quad
\implies
\quad
\hat{u}_p(y) = \left[ \py^2 + \AKp^2 \right]^{-1} \hat{F}_p,
\ees
and the factor $\AKp$ can be
arbitrarily close to zero as $\Abs{p} \rightarrow \infty$.
This is why $\tu$ only belongs to $X^s$ instead of $X^{s+2}$ in 
Theorem~\ref{Thm:EllEst}. Moreover, $\AKD \tu \in H^s$ does 
\textit{not} imply that $\tu \in H^{s+1}$ or even $\tu \in H^s$
(see Remark~\ref{Rk:AKD:Mapping}).
Thus both $\XNorm{\py \tu}{s}$ and $\XNorm{\AKD^{1/2} \py \tu}{s}$
appear in the elliptic estimates.
\end{Rk}

\begin{Rk}
We point out that the condition $\tF^y(\alpha,-a)=0$ is satisfied in
equation \eqref{Eqn:Fn} above.
\end{Rk}

%
%

\subsection{A Recursive Lemma}
\label{Sec:Recur}

To prove the analyticity of the field we establish the following
recursive estimate.
\begin{Lemma}
\label{Lemma:Recur}
Given an integer $s > d/2$, if $\tf \in H^{s+3/2}(P(\Gamma))$
and
\bes
\cM_s[\tu_n] \leq K_0 B^n,
\quad
\forall\ n < N,
\ees
for constants $K_0, B > 0$, then we have,
for all $j \in \{ \alpha, y, 0 \}$, the estimate
\begin{multline*}
\max \left\{ \SobNorm{\tJ_N}{s}, 
  \XNorm{\tF^j_N}{s},
  \XNorm{\AKD^{1/2} \tF^j_N}{s},
  \SobNorm{\tF^j_N(\alpha,0)}{s} \right\} \\
  \leq K_1 K_0 \left( \SobNorm{\tf}{s+3/2} B^{N-1}
  + \SobNorm{\tf}{s+3/2}^2 B^{N-2} \right),
\end{multline*}
for a positive constant $K_1 > 0$.
\end{Lemma}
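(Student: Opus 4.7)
The plan is to substitute the explicit expressions \eqref{Eqn:Fn:a}--\eqref{Eqn:Fn:d} for $\tF^{\alpha}_N, \tF^y_N, \tF^0_N$ and \eqref{Eqn:Jn} for $\tJ_N$ into each of the four norms on the left-hand side, estimate each summand by applying the multiplicative inequalities of Lemma~\ref{Lemma:Algebra:QP} and its corollary together with the weighting estimates \eqref{Eqn:Y:a}--\eqref{Eqn:Y:b}, and then invoke the inductive hypothesis $\cM_s[\tu_n] \leq K_0 B^n$ for $n \in \{N-1, N-2\}$. A generic term in $\tF^{\alpha}_N, \tF^y_N, \tF^0_N$ has one of two shapes: a quadratic term of the form $\tf \cdot \cD \tu_{N-1}$ (possibly weighted by $(a+y)$), or a cubic term $\tf \cdot (K^T \Grada \tf) \cdot \cD \tu_{N-2}$, where $\cD$ stands for one of the admissible derivatives $\py$ or $K^T\Grada$, all of which are controlled by $\cM_s[\tu_n]$. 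The source term $\tJ_N$ contains only a single quadratic contribution evaluated at $y=-a$, controlled by $\SobNorm{\tT[\tu_{N-1}(\alpha,-a)]}{s}$, again included in $\cM_s[\tu_{N-1}]$.

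First, I would handle the unweighted bulk norms $\XNorm{\tF^j_N}{s}$, the trace norms $\SobNorm{\tF^j_N(\alpha,0)}{s}$, and $\SobNorm{\tJ_N}{s}$. For each summand I would pull out the $(a+y)$-factor via \eqref{Eqn:Y:a}, then apply \eqref{Eqn:Algebra:QP:a}--\eqref{Eqn:Algebra:QP:b} to separate the $\tf$ (or $K^T\Grada\tf$) factor from the derivative of $\tu_{N-1}$ or $\tu_{N-2}$. Observing $\SobNorm{K^T\Grada\tf}{s} \leq C\SobNorm{\tf}{s+1} \leq C\SobNorm{\tf}{s+3/2}$, each such term is bounded by $CM\SobNorm{\tf}{s+3/2}\cM_s[\tu_{N-1}] \leq CMK_0\SobNorm{\tf}{s+3/2} B^{N-1}$ for quadratic contributions, and analogously by $CM^2 K_0 \SobNorm{\tf}{s+3/2}^2 B^{N-2}$ for cubic ones. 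The trace norms at $y=0$ are handled identically since $\SobNorm{\cD\tu_n(\alpha,0)}{s}$ is one of the controlled quantities in $\cM_s[\tu_n]$.

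Next, for the half-derivative volumetric norm $\XNorm{\AKD^{1/2}\tF^j_N}{s}$, I would use \eqref{Eqn:Algebra:QP:d} and \eqref{Eqn:Algebra:QP:f} after stripping the $(a+y)$-factor via \eqref{Eqn:Y:b}. A quadratic term contributes $M\SobNorm{\tf}{s+1/2}\bigl(\XNorm{\cD\tu_{N-1}}{s} + \XNorm{\AKD^{1/2}\cD\tu_{N-1}}{s}\bigr) \leq 2MK_0\SobNorm{\tf}{s+3/2}B^{N-1}$, since both pieces appear in $\cM_s[\tu_{N-1}]$. A cubic term is bounded via \eqref{Eqn:Algebra:QP:f} by $M^2 \SobNorm{\tf}{s+1/2}\SobNorm{K^T\Grada\tf}{s+1/2}\bigl(\XNorm{\cD\tu_{N-2}}{s} + \XNorm{\AKD^{1/2}\cD\tu_{N-2}}{s}\bigr)$. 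This is precisely where the hypothesis $\tf \in H^{s+3/2}$ is used: to estimate $\SobNorm{K^T\Grada\tf}{s+1/2} \leq C\SobNorm{\tf}{s+3/2}$. Summing over the finitely many summands and taking the worst constant yields the desired $K_1$.

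The main obstacle will be bookkeeping the half-derivative norms: one must decompose every product through \eqref{Eqn:Algebra:QP:c}--\eqref{Eqn:Algebra:QP:f} so as to retain \emph{both} the undifferentiated piece $\XNorm{\cD\tu_n}{s}$ and the half-derivative piece $\XNorm{\AKD^{1/2}\cD\tu_n}{s}$ on the right-hand side, since by Remark~\ref{Rk:AKD:Mapping} neither of these two pieces dominates the other in the quasiperiodic setting. This is why the definition of $\cM_s$ must include both quantities, and why the lemma's conclusion naturally splits into the $B^{N-1}$ contribution from quadratic terms and the $B^{N-2}$ contribution from cubic terms, exactly matching the shape of the recurrence.
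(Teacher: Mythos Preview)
Your proposal is correct and follows essentially the same approach as the paper: estimate each summand of $\tF^{\alpha}_N,\tF^y_N,\tF^0_N,\tJ_N$ using the algebra estimates \eqref{Eqn:Algebra:QP:a}--\eqref{Eqn:Algebra:QP:f}, the weighting bounds \eqref{Eqn:Y:a}--\eqref{Eqn:Y:b}, and the inductive hypothesis on $\cM_s[\tu_{N-1}],\cM_s[\tu_{N-2}]$. The paper merely picks one representative (cubic) term from each $\tF^j_N$ and from $\tJ_N$ and works it out explicitly, whereas you describe the uniform pattern; your generic ``shapes'' are slightly narrower than the actual list (e.g.\ some quadratic terms carry $K^T\Grada\tf$ rather than $\tf$, and some cubic terms carry $(\tf)^2$ or $\lvert K^T\Grada\tf\rvert^2$), but the same bounds apply since $\SobNorm{K^T\Grada\tf}{s+1/2}\le C\SobNorm{\tf}{s+3/2}$.
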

\begin{proof}
Noting that, c.f.\ \eqref{Eqn:Fn:a},
\bes
\tF_N = \Diva{K \tF^{\alpha}_N} + \py \tF^y_N + \tF^0_N,
\ees
we focus on one representative term from each $\tF^j_N$;
the proof for the other terms follows similarly.
To begin we consider, c.f.\ \eqref{Eqn:Fn:b},
\bes
\tF^{\alpha}_N = \cA_N + \ldots,
\quad
\cA_N := \frac{(a+y)}{a^2} \tf (K^T \Grada{\tf}) \py \tu_{N-2},
\ees
and estimate, using \eqref{Eqn:Y:b} and \eqref{Eqn:Algebra:QP:e},
\begin{align*}
\XNorm{\cA_N}{s}
  & = \XNorm{ \frac{(a+y)}{a^2} 
    \tf (K^T \Grada{\tf}) \py \tu_{N-2} }{s} \\
  & \leq \frac{Y M^2}{a^2} \SobNorm{\tf}{s} 
    \SobNorm{K^T \Grada{\tf}}{s} \XNorm{\py \tu_{N-2}}{s} \\
  & \leq K_1 \SobNorm{\tf}{s+3/2}^2 K_0 B^{N-2},
\end{align*}
and
\begin{align*}
\XNorm{\AKD^{1/2} \cA_N}{s}
  & = \XNorm{ \AKD^{1/2} \left[
    \frac{(a+y)}{a^2} 
    \tf (K^T \Grada{\tf}) \py \tu_{N-2} \right] }{s} \\
  & \leq \frac{Y M^2}{a^2} \SobNorm{\tf}{s+1/2} 
    \SobNorm{K^T \Grada{\tf}}{s+1/2} \\
  & \quad \times \left\{ \XNorm{\py \tu_{N-2}}{s} 
    + \XNorm{\AKD^{1/2} \py \tu_{N-2}}{s} \right\} \\
  & \leq K_1 \SobNorm{\tf}{s+3/2}^2 K_0 B^{N-2},
\end{align*}
for $K_1$ chosen appropriately. Also, since
\bes
\cA_N(\alpha,0) = \frac{1}{a} \tf (K^T \Grada{\tf}) 
  \py \tu_{N-2}(\alpha,0),
\ees
we have
\begin{align*}
\SobNorm{\cA_N(\alpha,0)}{s}
  & \leq \SobNorm{\frac{\tf}{a} (K^T \Grada{\tf}) 
  \py \tu_{N-2}(\alpha,0)}{s} \\
  & \leq \frac{M^2}{a} \SobNorm{\tf}{s} 
  \SobNorm{K^T \Grada{\tf}}{s} 
  \SobNorm{\py \tu_{N-2}(\alpha,0)}{s} \\
  & \leq K_1 \SobNorm{f}{s+3/2}^2 K_0 B^{N-2},
\end{align*}
for $K_1$ large enough. Next we recall that,
c.f.\ \eqref{Eqn:Fn:c},
\bes
\tF^y_N = \cB_N + \ldots,
\quad
\cB_N := - \frac{(a+y)^2}{a^2} \Abs{K^T \Grada{\tf}}^2 \py \tu_{N-2},
\ees
and compute, with \eqref{Eqn:Y:b} and \eqref{Eqn:Algebra:QP:e},
\begin{align*}
\XNorm{\cB_N}{s}
  & = \XNorm{\frac{(a+y)^2}{a^2} \Abs{K^T \Grada{\tf}}^2
    \py \tu_{N-2}}{s} \\
  & \leq \frac{Y^2 M^2}{a^2} \SobNorm{K^T \Grada{\tf}}{s}^2
    \XNorm{\py \tu_{N-2}}{s} \\
  & \leq K_1 \SobNorm{\tf}{s+3/2}^2 K_0 B^{N-2},
\end{align*}
and
\begin{align*}
\XNorm{\AKD^{1/2} \cB_N}{s}
  & = \XNorm{\AKD^{1/2} \left[ \frac{(a+y)^2}{a^2} 
  \Abs{K^T \Grada{\tf}}^2
    \py \tu_{N-2} \right]}{s} \\
  & \leq \frac{Y^2 M^2}{a^2} \SobNorm{K^T \Grada{\tf}}{s+1/2}^2 \\
  & \quad \times \left\{ \XNorm{\py \tu_{N-2}}{s} 
    + \XNorm{\AKD^{1/2} \py \tu_{N-2}}{s} \right\} \\
  & \leq K_1 \SobNorm{\tf}{s+3/2}^2 K_0 B^{N-2},
\end{align*}
if $K_1$ is chosen large enough. Also, since
\bes
\cB_N(\alpha,0) = - \Abs{K^T \Grada{\tf}}^2 \py \tu_{N-2}(\alpha,0),
\ees
we have
\begin{align*}
\SobNorm{\cB_N(\alpha,0)}{s}
  & \leq \SobNorm{\Abs{K^T \Grada{\tf}}^2 
  \py \tu_{N-2}(\alpha,0)}{s} \\
  & \leq M^2 \SobNorm{K^T \Grada{\tf}}{s}^2
  \SobNorm{\py \tu_{N-2}(\alpha,0)}{s} \\
  & \leq K_1 \SobNorm{f}{s+3/2}^2 K_0 B^{N-2},
\end{align*}
for $K_1$ large enough. Finally, c.f.\ \eqref{Eqn:Fn:d},
\bes
\tF^0_N = \cC_N + \ldots,
\quad
\cC_N := - \frac{(a+y)}{a^2} \Abs{K^T \Grada{\tf}}^2 \py \tu_{N-2},
\ees
and we estimate, using \eqref{Eqn:Y:b} and \eqref{Eqn:Algebra:QP:e},
\begin{align*}
\XNorm{\cC_N}{s}
  & = \XNorm{\frac{(a+y)}{a^2} \Abs{K^T \Grada{\tf}}^2
    \py \tu_{N-2} }{s} \\
  & \leq \frac{Y M^2}{a^2} \SobNorm{K^T \Grada{\tf}}{s}^2
  \XNorm{\py \tu_{N-2}}{s} \\
  & \leq K_1 \SobNorm{\tf}{s+3/2}^2 K_0 B^{N-2},
\end{align*}
and
\begin{align*}
\XNorm{\AKD^{1/2} \cC_N}{s}
  & = \XNorm{\AKD^{1/2} \left[ \frac{(a+y)}{a^2} 
  \Abs{K^T \Grada{\tf}}^2
    \py \tu_{N-2}\right]}{s} \\
  & \leq \frac{Y M^2}{a^2} \SobNorm{K^T \Grada{\tf}}{s+1/2}^2 \\
  & \quad \times \left\{ 2 \XNorm{\py \tu_{N-2}}{s} 
    + \XNorm{\AKD^{1/2} \py \tu_{N-2}}{s} \right\} \\
  & \leq K_1 \SobNorm{\tf}{s+3/2}^2 K_0 B^{N-2},
\end{align*}
for $K_1$ sufficiently large. Also, since
\bes
\cC_N(\alpha,0) = - \frac{1}{a} \Abs{K^T \Grada{\tf}}^2 \py 
  \tu_{N-2}(\alpha,0),
\ees
we have
\begin{align*}
\SobNorm{\cC_N(\alpha,0)}{s}
  & \leq \SobNorm{\frac{1}{a} 
  \Abs{K^T \Grada{\tf}}^2 \py \tu_{N-2}(\alpha,0)}{s} \\
  & \leq \frac{M^2}{a} \SobNorm{K^T \Grada{\tf}}{s}^2
  \SobNorm{\py \tu_{N-2}(\alpha,0)}{s} \\
  & \leq K_1 \SobNorm{f}{s+3/2}^2 K_0 B^{N-2},
\end{align*}
for $K_1$ large enough. To close, c.f.\ \eqref{Eqn:Jn},
\bes
\tJ_N = \frac{1}{a} \tf \tT[\tu_{N-1}(\alpha, -a)],
\ees
and we compute, using \eqref{Eqn:Algebra:QP:a},
\begin{align*}
\SobNorm{\tJ_N}{s}
  & = \SobNorm{\frac{1}{a} \tf \tT[\tu_{N-1}(\alpha, -a)]}{s}
    \leq \frac{M}{a} \SobNorm{\tf}{s}
    \SobNorm{\tT[\tu_{N-1}(\alpha, -a)]}{s} \\
  & \leq \frac{M}{a} \SobNorm{\tf}{s} K_0 B^{N-1}
    \leq K_1 \SobNorm{\tf}{s+3/2} K_0 B^{N-1},
\end{align*}
for $K_1$ big enough.
\end{proof}

%
%

\subsection{Analyticity of the Field}
\label{Sec:Anal:Field}

We are now in a position to establish the analyticity of the 
transformed field $\tu$ in a sense which we now make precise.
\begin{Thm}
\label{Thm:AnalField}
Given an integer $s > d/2$, if $\tf \in H^{s+3/2}(P(\Gamma))$
and $\txi \in H^{s+1}(P(\Gamma))$ there exists a unique 
solution
\bes
\tu(\alpha,y;\Eps) = \sumn \tu_n(\alpha,y) \Eps^n,
\ees
of \eqref{Eqn:Laplace:Full:QP:COV} satisfying
\be
\label{Eqn:AnalField}
\cM_s[\tu_n] \leq K_0 B^n,
\quad
\forall\ n \geq 0,
\ee
for any $B > C_0 \SobNorm{\tf}{s+3/2}$ and positive constants
$K_0, C_0 > 0$.
\end{Thm}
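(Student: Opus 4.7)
The plan is to establish \eqref{Eqn:AnalField} by strong induction on $n$, combining the recursive forcing estimate of Lemma~\ref{Lemma:Recur} with the elliptic estimate of Theorem~\ref{Thm:EllEst}. At each level, $\tu_n$ solves the boundary value problem \eqref{Eqn:Laplace:Full:QP:COV:n}, which is precisely of the form \eqref{Eqn:BVP:QP} with Dirichlet datum $\delta_{n,0}\txi$, Neumann--side forcing $\tJ_n$, and volumetric forcing assembled from $\tF_n^\alpha, \tF_n^y, \tF_n^0$. One first observes that the hypothesis $\tF_N^y(\alpha,-a)=0$ demanded by Theorem~\ref{Thm:EllEst} is automatically satisfied because every term in \eqref{Eqn:Fn:c} carries the factor $(a+y)$ or $(a+y)^2$, which vanishes at $y=-a$, as noted in the remark following that theorem.

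For the base case $n=0$, the recurrences \eqref{Eqn:Fn}--\eqref{Eqn:Jn} produce $\tF_0^j \equiv 0$ and $\tJ_0 \equiv 0$, and the Dirichlet datum is $\txi$. A direct application of Theorem~\ref{Thm:EllEst} yields
\bes
\cM_s[\tu_0] \leq C_e \SobNorm{\txi}{s+1},
\ees
and one sets $K_0 := C_e \SobNorm{\txi}{s+1}$, which fixes the constant appearing in \eqref{Eqn:AnalField}. For the inductive step, assume $\cM_s[\tu_n] \leq K_0 B^n$ for all $n < N$, with $N \geq 1$. Lemma~\ref{Lemma:Recur} then delivers a uniform bound of the form $K_1 K_0(\SobNorm{\tf}{s+3/2} B^{N-1} + \SobNorm{\tf}{s+3/2}^2 B^{N-2})$ on $\SobNorm{\tJ_N}{s}$ and on each of $\XNorm{\tF_N^j}{s}$, $\XNorm{\AKD^{1/2}\tF_N^j}{s}$, and $\SobNorm{\tF_N^j(\alpha,0)}{s}$. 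Since the Dirichlet datum for $\tu_N$ is zero when $N \geq 1$, a second application of Theorem~\ref{Thm:EllEst} gives
\bes
\cM_s[\tu_N] \leq \tilde{C} K_0 \left( \SobNorm{\tf}{s+3/2} B^{N-1} + \SobNorm{\tf}{s+3/2}^2 B^{N-2} \right),
\ees
for some constant $\tilde{C}$ depending on $C_e$, $K_1$, and the (finite) number of forcing terms being summed.

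The induction closes provided the bracketed expression is dominated by $B^N/\tilde{C}$, that is, provided
\bes
\frac{\tilde{C} \SobNorm{\tf}{s+3/2}}{B} + \frac{\tilde{C} \SobNorm{\tf}{s+3/2}^2}{B^2} \leq 1.
\ees
Setting $B = C_0 \SobNorm{\tf}{s+3/2}$ reduces this to $\tilde{C}/C_0 + \tilde{C}/C_0^2 \leq 1$, which is achieved for any $C_0$ sufficiently large (depending only on $\tilde{C}$). This simultaneously establishes the geometric bound $\cM_s[\tu_N] \leq K_0 B^N$ and exhibits the explicit rate $C_0$ appearing in the statement.

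I expect the main obstacle to be bookkeeping rather than a single hard analytic step: one must verify that Lemma~\ref{Lemma:Recur}'s bound covers every term in \eqref{Eqn:Fn}--\eqref{Eqn:Jn} (not merely the representative $\cA_N, \cB_N, \cC_N$ treated explicitly there), check that the regularity hypothesis $\tf \in H^{s+3/2}$ is exactly what is needed to control the $H^{s+1/2}$--level norms $\SobNorm{K^T\Grada{\tf}}{s+1/2}$ produced by the algebra estimates \eqref{Eqn:Algebra:QP:c}--\eqref{Eqn:Algebra:QP:f}, and confirm that the trace--at--$y=-a$ condition for $\tF^y$ survives the recursion. With these verifications in place, the induction is mechanical, and uniqueness of $\tu$ follows from the uniqueness clause in Theorem~\ref{Thm:EllEst} applied order by order.
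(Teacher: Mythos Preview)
Your proposal is correct and follows essentially the same approach as the paper: induction on $n$, with the base case handled by Theorem~\ref{Thm:EllEst} applied to $\tu_0$ (setting $K_0 := C_e\SobNorm{\txi}{s+1}$), and the inductive step obtained by feeding the output of Lemma~\ref{Lemma:Recur} into Theorem~\ref{Thm:EllEst} and then choosing $B$ proportional to $\SobNorm{\tf}{s+3/2}$ to close. Your additional remarks on the $\tF^y(\alpha,-a)=0$ condition, the bookkeeping in Lemma~\ref{Lemma:Recur}, and uniqueness are all accurate and simply make explicit what the paper leaves implicit.
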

\begin{proof}
We work by induction on $n$. At order $n=0$ we must solve
\eqref{Eqn:Laplace:Full:QP:COV:n} where
$\tF_0 \equiv \tJ_n \equiv 0$. From Theorem~\ref{Thm:EllEst}
we have that
\bes
\cM_s[\tu_0] \leq C_e \SobNorm{\txi}{s+1} =: K_0 < \infty.
\ees
We now assume estimate \eqref{Eqn:AnalField} for all $n < N$ and
study $\cM_s[\tu_N]$. For this we invoke the elliptic
estimate, Theorem~\ref{Thm:EllEst}, to realize that
\bes
\cM_s[u_N] \leq C_e \left\{ \SobNorm{\tJ_N}{s} 
  + \sum_{j \in \{ \alpha, y, 0 \}}
  \left( \XNorm{\tF^j_N}{s}
  + \XNorm{\AKD^{1/2} \tF^j_N}{s}
  + \SobNorm{ \tF^j_N(\alpha,0) }{s} \right)
  \right\}.
\ees
From the recursive estimate in Lemma~\ref{Lemma:Recur} we now
deduce that
\bes
\cM_s[u_N] \leq C_e 10 K_1 K_0
  \left( \SobNorm{\tf}{s+3/2} B^{N-1}
  + \SobNorm{\tf}{s+3/2} B^{N-2} \right).
\ees
We realize
\bes
\cM_s[u_N] \leq K_0 B^N,
\ees
provided that, for instance,
\bes
B > \max \left\{ 20 C_e K_1, \sqrt{20 C_e K_1} \right\}
  \SobNorm{\tf}{s+3/2},
\ees
and we are done.
\end{proof}

%
%

\subsection{Analyticity of the DNO}
\label{Sec:Anal:DNO}

At last we can establish the analyticity of the DNO, $\tG$,
more specifically we prove the following result.
\begin{Thm}
\label{Thm:DNOAnal}
Given an integer $s > d/2$, if $\tf \in H^{s+3/2}(P(\Gamma))$ and
$\txi \in H^{s+1}(P(\Gamma))$ then the series
\be
\label{Eqn:tG:Exp}
\tG(\Eps \tf) = \sumn \tG_n(\tf) \Eps^n,
\ee
converges strongly as an operator from $H^{s+1}(P(\Gamma))$ to
$H^s(P(\Gamma))$. More precisely,
\be
\label{Eqn:DNOAnal}
\SobNorm{\tG_n(\tf)[\txi]}{s} \leq K_2 B^n,
\quad
\forall\ n \geq 0,
\ee
for any $B > C_0 \SobNorm{\tf}{s+3/2}$ and positive constant
$K_2 > 0$.
\end{Thm}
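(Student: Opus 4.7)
The plan is to exploit the explicit recursion \eqref{Eqn:tG:COV:n}, which writes $\tG_n(\tf)[\txi]$ as a finite linear combination of boundary traces of $\tu_n,\tu_{n-1},\tu_{n-2}$ (all already controlled by Theorem~\ref{Thm:AnalField} via $\cM_s$) together with a single self--referential term $\tfrac{1}{a}\tf\,\tG_{n-1}(\tf)[\txi]$. Combining this formula with the algebra property \eqref{Eqn:Algebra:QP:a} reduces the proof to a strong induction on $n$, patterned directly on the analyticity argument used in Theorem~\ref{Thm:AnalField}.

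Concretely, I would first verify the base case: at $n=0$ the terms in \eqref{Eqn:tG:COV:n} involving negative indices vanish by convention, so $\tG_0(\tf)[\txi]=\py\tu_0(\alpha,0)$, and Theorem~\ref{Thm:AnalField} supplies $\SobNorm{\tG_0}{s}\le\cM_s[\tu_0]\le K_0$. The nontrivial step is the inductive one: assuming \eqref{Eqn:DNOAnal} for every $n<N$, I would bound each of the five summands of \eqref{Eqn:tG:COV:n} separately using \eqref{Eqn:Algebra:QP:a} together with $\SobNorm{K^T\Grada\tf}{s}\le C\SobNorm{\tf}{s+3/2}$, substituting $\SobNorm{\py\tu_N(\alpha,0)}{s}\le\cM_s[\tu_N]\le K_0B^N$ and the analogous traces $\cM_s[\tu_{N-1}]\le K_0B^{N-1}$, $\cM_s[\tu_{N-2}]\le K_0B^{N-2}$ for the first, second, fourth, and fifth terms, and the inductive hypothesis $\SobNorm{\tG_{N-1}}{s}\le K_2B^{N-1}$ for the third. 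Rewriting every quadratic--in--$\tf$ contribution as $\SobNorm{\tf}{s+3/2}^2B^{N-2}=\SobNorm{\tf}{s+3/2}\bigl(\SobNorm{\tf}{s+3/2}/B\bigr)B^{N-1}$ shows that each of the four correction summands carries at least one factor of $\SobNorm{\tf}{s+3/2}/B$, while the leading summand contributes $K_0B^N$.

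To close the recursion, I would fix $K_2$ slightly larger than $K_0$ (e.g.\ $K_2=2K_0$) and take $B$ so large that each correction is bounded by $(K_2/8)B^N$; inspecting the four coefficients shows this reduces to a single smallness requirement $B>C_0\SobNorm{\tf}{s+3/2}$ for an explicit $C_0$ depending only on $M$, $a$, $C_e$, $K_0/K_2$. Summing then gives $\SobNorm{\tG_N}{s}\le(K_0+K_2/2)B^N\le K_2B^N$, closing the induction. The main (if mild) obstacle is the self--referential term $\tfrac{1}{a}\tf\,\tG_{N-1}$: unlike the other four summands it cannot be absorbed into $\cM_s[\tu_\cdot]$ and so \emph{requires} the inductive hypothesis, with a contribution to $C_0$ proportional to $M/a$; this is also the mechanism by which $K_2$ ends up strictly larger than $K_0$. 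Once \eqref{Eqn:DNOAnal} is in hand, the strong operator convergence of \eqref{Eqn:tG:Exp} from $H^{s+1}(P(\Gamma))$ to $H^s(P(\Gamma))$ follows at once from the geometric series test for $|\Eps|<1/B$.
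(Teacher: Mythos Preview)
Your proposal is correct and follows essentially the same approach as the paper: induction on $n$ via the recursion \eqref{Eqn:tG:COV:n}, bounding the five summands with the algebra property \eqref{Eqn:Algebra:QP:a} and the field estimates \eqref{Eqn:AnalField}, and closing by choosing $K_2$ and $B$ appropriately. The only cosmetic differences are that the paper handles the base case via the explicit Fourier--multiplier formula for $\tG_0$ rather than through $\py\tu_0(\alpha,0)$, and it balances the constants as $K_2\ge 5K_0$ with each summand bounded by $(K_2/5)B^N$ instead of your $K_2=2K_0$ scheme.
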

\begin{proof}
We work by induction in $n$ and begin with the formula for $\tG_0$,
\bes
\tG_0[\txi] = \begin{cases}
  \sump \AKp \hat{\xi}_p e^{i \alpha \cdot p}, & h = \infty, \\
  \sump \AKp \tanh(h \AKD) \hat{\xi}_p e^{i \alpha \cdot p}, & h < \infty.
  \end{cases}
\ees
We focus our attention on the infinite depth case ($h = \infty$) and note that
the finite depth case ($h < \infty$) can be established in a similar fashion.
We estimate
\bes
\SobNorm{\tG_0[\txi]}{s}^2
  \leq \sump \Angle{p}^{2s} \AKp^2 \Abs{\hat{\xi}_p}^2
  \leq C \sump \Angle{p}^{2 (s+1)} \Abs{\hat{\xi}_p}^2
  \leq \SobNorm{\txi}{s+1}^2 =: K_2.
\ees
We now assume \eqref{Eqn:DNOAnal} for all $n < N$ and, from
\eqref{Eqn:tG:COV:n}, we estimate
\begin{align*}
\SobNorm{\tG_N(\tf)[\txi]}{s} 
  & \leq \SobNorm{\py \tu_N(\alpha,0)}{s}
    + \SobNorm{(K^T \Grada{\tf}) \cdot (K^T \Grada{\tu_{N-1}(\alpha,0)})}{s}
\\ & \quad
    + \frac{1}{a} \SobNorm{\tf \tG_{N-1}(\tf)[\txi]}{s}
\\ & \quad
  + \frac{1}{a} \SobNorm{\tf (K^T \Grada{\tf})
  \cdot (K^T \Grada{\tu_{N-2}(\alpha,0)})}{s}
\\ & \quad
  + \SobNorm{(K^T \Grada{\tf})^2 \py \tu_{N-2}(\alpha,0)}{s}.
\end{align*}
Now, from \eqref{Eqn:Algebra:QP:a} we have
\begin{align*}
\SobNorm{\tG_N(\tf)[\txi]}{s} 
  & \leq \SobNorm{\py \tu_N(\alpha,0)}{s}
    + M \SobNorm{K^T \Grada{\tf}}{s} \SobNorm{K^T \Grada{\tu_{N-1}(\alpha,0)}}{s}
\\ & \quad
    + \frac{1}{a} M \SobNorm{\tf}{s} \SobNorm{\tG_{N-1}(\tf)[\txi]}{s}
\\ & \quad
  + \frac{1}{a} M^2 \SobNorm{\tf}{s} \SobNorm{K^T \Grada{\tf}}{s}
  \SobNorm{K^T \Grada{\tu_{n-2}(\alpha,0)}}{s}
\\ & \quad
  + M^2 \SobNorm{K^T \Grada{\tf}}{s}^2 \SobNorm{\py \tu_{N-2}(\alpha,0)}{s}.
\end{align*}
From \eqref{Eqn:AnalField} we have
\begin{align*}
\SobNorm{\tG_N(\tf)[\txi]}{s} 
  & \leq K_0 B^N
    + M \SobNorm{\tf}{s+3/2} K_0 B^{N-1}
    + \frac{1}{a} M \SobNorm{\tf}{s+3/2} K_2 B^{N-1}
\\ & \quad
  + \frac{1}{a} M^2 \SobNorm{\tf}{s+3/2} \SobNorm{\tf}{s+3/2} K_0 B^{N-2}
  + M^2 \SobNorm{\tf}{s+3/2}^2 K_0 B^{N-2}
\\ & \leq K_2 B^N,
\end{align*}
provided that each of the five terms is bounded above by $(K_2/5) B^N$.
This demands that
\bes
K_2 \geq 5 K_0,
\quad
B \geq M \max \left\{ \frac{5 K_0}{K_2}, \frac{5}{a},
  \sqrt{\frac{5 K_0}{a K_2}}, \sqrt{\frac{5 K_0}{K_2}} \right\}
  \SobNorm{\tf}{s+3/2},
\ees
and accommodating all of these delivers the theorem.
\end{proof}

%
%

\section{Numerical Algorithms}
\label{Sec:NumAlg}

In order to illustrate the possibilities enabled by the analyticity theory
outlined above, we now briefly describe and validate three High--Order
Perturbation of Surfaces (HOPS) algorithms 
\cite{NichollsReitich99,NichollsReitich00a,NichollsReitich00b}
suggested by this. Presently we focus on simple two--dimensional 
geometries ($x, y \in \Real$) which feature lateral quasiperiodicity
specified by the matrix $K \in \Real^{2 \times 1}$ (two base periods). 
However, we also describe a limited selection of much more intensive
three--dimensional simulations ($x \in \Real^2$, $y \in \Real$) with
lateral quasiperiodicity determined by $K \in \Real^{3 \times 2}$.

%
%

\subsection{High--Order Spectral Methods}
\label{Sec:HOS}

In light of the separable geometries we consider (the fundamental period
cell determined by the lattice $\Gamma$, $P(\Gamma)$, in tensor product 
with the interval $[-a,0]$) we chose High--Order Spectral (HOS)
methods to discretize each of our algorithms. The interested reader
should consult one of the excellent texts on the topic
(e.g., \cite{GottliebOrszag77,CHQZ88,Boyd01,ShenTang06})
for complete details as we now
provide only a brief description of the concepts we require in this
subsection.

In our schemes we approximated doubly $2 \pi$--periodic functions
by their truncated Fourier series, e.g.,
\bes
\tf(\alpha) \approx \tf^{N_{\alpha}}(\alpha)
  := \sum_{p \in \cP_{N_{\alpha}}} \hat{f}_p e^{i p \cdot \alpha},
\ees
where
\begin{gather*}
p = \begin{pmatrix} p_1 \\ p_2 \end{pmatrix},
\quad
\alpha = \begin{pmatrix} \alpha_1 \\ \alpha_2 \end{pmatrix},
\quad
N_{\alpha} = \begin{pmatrix} N_{\alpha}^1 \\ N_{\alpha}^2 \end{pmatrix},
\\
\cP_{N_{\alpha}} = \left\{ p \in \Gamma'\ \left|\
  -\frac{N_{\alpha}^m}{2} \leq p_m \leq 
  \frac{N_{\alpha}^m}{2} - 1,
  \quad m = 1, 2 \right. \right\}.
\end{gather*}
The gradient of such a function can be readily approximated with
high accuracy by
\bes
\Grada{\tf(\alpha)} \approx 
  \sum_{p \in \cP_{N_{\alpha}}} (i p) \hat{f}_p e^{i p \cdot \alpha},
\ees
as can Fourier multipliers, e.g.,
\bes
m(D)[\tf(\alpha)] \approx 
  \sum_{p \in \cP_{N_{\alpha}}} m(p) \hat{f}_p e^{i p \cdot \alpha}.
\ees
Using well--known procedures involving the Fast Fourier Transform (FFT),
products of these functions can be simulated at the equally--spaced
gridpoints
\bes
\alpha_j^m = \frac{2 \pi j_m}{N_{\alpha}^m},
\quad
0 \leq j_m \leq N_{\alpha}^m-1,
\quad
m = 1, 2.
\ees
This can also be used in conjunction with the Trapezoidal Rule to
approximate the Fourier coefficients, $\hat{f}_p$, with very high
accuracy (in fact, spectral, if $\tf(\alpha)$ is analytic)
\cite{GottliebOrszag77,CHQZ88,Boyd01,ShenTang06}.

In relation to the recursions we have analyzed in our theory above, we also
require the approximation of volumetric functions. For this we
simulate laterally doubly $2 \pi$--periodic functions of $y$ on
the interval $[-a,0]$ by their truncated Fourier--Chebyshev series, e.g.,
\bes
\tilde{u}(\alpha,y) \approx \tilde{u}^{N_{\alpha},N_y}(\alpha,y)
  := \sum_{p \in \cP_{N_{\alpha}}} \sum_{q=0}^{N_y} \hat{u}_{p,q}
  T_q \left( \frac{2y+a}{a} \right) e^{i p \cdot \alpha}.
\ees
Again, classical techniques from the theory of Spectral Methods can
be used to compute derivatives of these functions with respect to 
both $\alpha$ and $y$.
As above, using methods involving the Fast Chebyshev Transform,
products of these functions can be simulated at the Chebyshev points
\bes
y_r = \frac{a}{2} \left( \cos \left( \frac{\pi r}{N_y} \right) - 1 \right),
\quad 
0 \leq r \leq N_y.
\ees
Once again, these can also be used in conjunction with quadrature
formulas to approximate the Fourier--Chebyshev coefficients, 
$\hat{u}_{p,q}$, with high fidelity.

%
%

\subsection{The Method of Operator Expansions}
\label{Sec:OE}

The first HOPS method we discuss
is the classical Method of Operator Expansions (OE) due to Milder
\cite{Milder91a,Milder91b,MilderSharp91,MilderSharp92}
and Craig \& Sulem \cite{CraigSulem93}. In the infinite depth case
($h=\infty$) the description of this approach begins by defining the 
function
\bes
\tvarphi_p(\alpha,y) := e^{\Abs{K^T p} y} e^{i p \cdot \alpha},
\ees
for some $p \in \Integer^2$. This function satisfies the quasiperiodic Laplace's
equation, \eqref{Eqn:Laplace:Full:QP:a}, and the boundary conditions,
\eqref{Eqn:Laplace:Full:QP:c} and \eqref{Eqn:Laplace:Full:QP:d}, so that we can 
insert it into the definition of the quasiperiodic DNO,
\eqref{Eqn:DNO:QP}, to realize the true statement
\bes
\tG(\tg)[ \tvarphi_p(\alpha,\tg) ] = \py \tvarphi_p(\alpha,\tg)
  - \left( K^T \Grada{ \tg } \right)
  \left( K^T \Grada{ \tvarphi(\alpha,\tg) } \right).
\ees
Setting $\tg(\alpha) = \Eps \tf(\alpha)$ and using the analyticity of the
DNO with respect to $\Eps$, we expand to find
\begin{multline*}
\left( \sumn \tG_n(\tf) \Eps^n \right)
  \left[ \summ \tF_m \Abs{K^T p}^m e^{i p \cdot \alpha} \Eps^m \right]
  = \sumn \tF_n \Abs{K^T p}^{n+1} e^{i p \cdot \alpha} \Eps^n \\
  - \left( \Eps K^T \Grada{ \tf } \right)
  \left( \sumn \tF_n (i K^T p) \Abs{K^T p}^n e^{i p \cdot \alpha} 
  \Eps^n \right),
\end{multline*}
where
\bes
\tF_n(\alpha) := \frac{\tf(\alpha)^n}{n!}.
\ees
At order $\BigOh{\Eps^0}$ we find
\bes
\tG_0 \left[ e^{i p \cdot \alpha} \right]
  = \Abs{K^T p} e^{i p \cdot \alpha},
\ees
while at order $\BigOh{\Eps^n}$ for $n>0$ we find
\begin{multline*}
\tG_n(\tf) \left[ e^{i p \cdot \alpha} \right]
  = \tF_n \Abs{K^T p}^{n+1} e^{i p \cdot \alpha}
  - \left( K^T \Grada{\tf} \right) \tF_{n-1} (i K^T p)
  \Abs{K^T p}^{n-1} e^{i p \cdot \alpha}
  \\
  - \sum_{\ell=0}^{n-1} \tG_{\ell}(\tf) \left[ \tF_{n-\ell} 
  \Abs{K^T p}^{n-\ell} e^{i p \cdot \alpha} \right].
\end{multline*}
Now, if we express the generic function $\txi$ in the
Fourier series
\bes
\txi(\alpha) = \sump \hat{\xi}_p e^{i p \cdot \alpha},
\ees
then we can conclude that
\be
\label{Eqn:OE:0}
\tG_0 \left[ \txi \right]
  = \tG_0 \left[ \sump \hat{\xi}_p e^{i p \cdot \alpha} \right]
  = \sump \hat{\xi}_p \tG_0 \left[ e^{i p \cdot \alpha} \right]
  = \sump \Abs{K^T p} \hat{\xi}_p e^{i p \cdot \alpha}
  =: \Abs{K^T D} \txi,
\ee
which we use to define the order--one Fourier multiplier
$\Abs{K^T D}$. In a similar fashion we express, for $n>0$,
\begin{multline*}
\tG_n(\tf) \left[ \txi \right]
  = \tF_n \Abs{K^T D}^{n+1} \txi
  - \left( K^T \Grada{\tf} \right) \tF_{n-1} (i K^T D)
  \Abs{K^T D}^{n-1} \txi
  \\
  - \sum_{\ell=0}^{n-1} \tG_{\ell}(\tf) \left[ 
  \tF_{n-\ell} \Abs{K^T D}^{n-\ell} \txi \right],
\end{multline*}
and we point out the \textit{recursive} nature of this
formula: $\tG_{\ell}$ must be recomputed for all
$0 \leq \ell \leq n-1$ at each perturbation order $n$.

It was noted in \cite{NichollsPhD,Nicholls98} that the
self--adjointness of the DNO could be used to advantage in producing
a \textit{rapid} version of these recursions. Recalling that the
adjoint operation satisfies $(AB)^* = B^* A^*$ for any two linear
operators $A$ and $B$, we can express $\tG_n = \tG_n^*$ as
\begin{multline}
\label{Eqn:OE:n}
\tG_n(\tf) \left[ \txi \right]
  = \Abs{K^T D}^{n+1} \left[ \tF_n \txi \right]
  - \Abs{K^T D}^{n-1} (i K^T D) \left[ \left( K^T \Grada{\tf} \right)
  \tF_{n-1} \txi \right]
  \\
  - \sum_{\ell=0}^{n-1} \Abs{K^T D}^{n-\ell} 
  \left[ \tF_{n-\ell} \tG_{\ell}(\tf)\left[ \txi \right] \right],
\end{multline}
which is much faster than the formula above as the $\tG_{\ell}[\txi]$
may simply be stored from one perturbation order to the next rather than
recomputed from scratch.

The OE algorithm is a Fourier HOS discretization of the formulas 
\eqref{Eqn:OE:0} and \eqref{Eqn:OE:n} where we begin by considering
$\tg(\alpha) = \Eps \tf(\alpha)$, expanding
\bes
\tG(\alpha;\Eps) = \sumn \tG_n \Eps^n,
\ees
and approximating
\bes
\tG(\alpha;\Eps) \approx \tG^N(\alpha;\Eps)
  := \sum_{n=0}^{N} \tG_n(\alpha) \Eps^n,
\ees
where
\be
\label{Eqn:OE:Approx:Nalpha:N}
\tG_n(\alpha) \approx \tG_n^{N_{\alpha}}(\alpha)
  := \sum_{p \in \cP_{N_{\alpha}}} \hat{G}_{p,n} e^{i p \cdot \alpha}.
\ee
Given
\bes
\txi^{N_{\alpha}}(\alpha) = \sum_{p \in \cP_{N_{\alpha}}}
  \hat{\xi}_p e^{i p \cdot \alpha},
\ees
we then insert these forms into \eqref{Eqn:OE:0} and \eqref{Eqn:OE:n}, and 
demand these be true for $0 \leq n \leq N$. Using the approach outlined
in Section~\ref{Sec:HOS} the coefficients $\{ \hat{G}_{p,n} \}$ are readily 
recovered enabling the approximation $\tnu^{N,N_{\alpha}}$.

%
%

\subsection{The Method of Field Expansions}
\label{Sec:FE}

The second HOPS method we discuss is Bruno \& Reitich's Method of Field 
Expansions (FE) \cite{BrunoReitich93a,BrunoReitich93b,BrunoReitich93c}. 
To describe this approach we again consider $\tg(\alpha) = \Eps \tf(\alpha)$
and note that the solution
of \eqref{Eqn:Laplace:Full:QP} will depend upon $\Eps$; we assume that this
dependence is \textit{analytic} so that
\bes
\tvarphi(\alpha,y;\Eps) = \sumn \tvarphi_n(\alpha,y) \Eps^n.
\ees
It can be shown that these $\tvarphi_n$ satisfy
\bse
\label{Eqn:LaplaceFull:n}
\begin{align}
& \Diva{ K K^T \Grada{ \tvarphi_n(\alpha,y) } } 
  + \py^2 \tvarphi_n(\alpha,y) = 0, 
  && -a < y < 0,
  \label{Eqn:LaplaceFull:n:a} \\
& \tvarphi_n(\alpha,0) = \delta_{n,0} \txi(\alpha)
  + Q_n(\alpha),
  && y = 0,
  \label{Eqn:LaplaceFull:n:b} \\
& \py \tvarphi_n - \tT[ \tvarphi_n ] = 0, && y = -a, 
  \label{Eqn:LaplaceFull:n:c} \\
& \tvarphi_n(\alpha+\gamma,y) = \tvarphi_n(\alpha,y),
  && \gamma \in \Gamma.
  \label{Eqn:LaplaceFull:n:d}
\end{align}
where
\be
\label{Eqn:LaplaceFull:Q:n}
Q_n(\alpha) = -\sum_{\ell=0}^{n-1} \tF_{n-\ell} 
  \py^{n-\ell} \tvarphi_{\ell}(\alpha,0).
\ee
\ese
In the infinite depth case ($h=\infty$) the solutions of 
\eqref{Eqn:LaplaceFull:n:a}, \eqref{Eqn:LaplaceFull:n:c}, and 
\eqref{Eqn:LaplaceFull:n:d} can be expressed as
\bes
\tvarphi_n(\alpha,y) = \sump a_{n,p} e^{\Abs{K^T p} y} e^{i p \cdot \alpha},
\ees
while \eqref{Eqn:LaplaceFull:n:b} and \eqref{Eqn:LaplaceFull:Q:n}
can be used to discover the $\{ a_{n,p} \}$. More specifically,
\be
\label{Eqn:FE:n}
a_{n,p} = \delta_{n,0} \hat{\xi}_p 
  - \sumq \sum_{\ell=0}^{n-1} \hat{F}_{n-\ell,p-q}
  \Abs{K^T q}^{n-\ell} a_{\ell,q},
\ee
which constitute the FE recursions.

The FE approach is a Fourier HOS approximation of the recursions
\eqref{Eqn:FE:n} which recovers the $\{ a_{n,p} \}$ given the
Fourier coefficients $\{ \hat{\xi}_p, \hat{f}_p \}$. With these it
is not difficult to approximate the $\tG_n$ which gives a simulation
of the form \eqref{Eqn:OE:Approx:Nalpha:N}.

%
%

\subsection{The Method of Transformed Field Expansions}
\label{Sec:TFE}

The final HOPS scheme we discuss is Nicholls and Reitich's Method 
of Transformed Field Expansions (TFE)
\cite{NichollsReitich99,NichollsReitich00a,NichollsReitich00b,NichollsReitich03a,NichollsReitich03b}.
Simply put, this algorithm is a HOS
\cite{GottliebOrszag77,CHQZ88,Boyd01,ShenTang06}
discretization and approximation
of the transformed problem \eqref{Eqn:Laplace:Full:QP:COV:n} which we 
utilized in our theoretical demonstrations earlier. In a bit more detail
we consider $\tg(\alpha) = \Eps \tf(\alpha)$ and use our \textit{rigorous}
demonstration of the \textit{analyticity} of the solution to write
\bes
\tu(\alpha,y;\Eps) = \sumn \tu_n(\alpha,y) \Eps^n.
\ees
We begin by approximating
\be
\label{Eqn:TFE:Approx:N}
\tu(\alpha;y;\Eps) \approx \tu^N(\alpha,y;\Eps)
  := \sum_{n=0}^{N} \tu_n(\alpha,y) \Eps^n,
\ee
and then represent
\bes
\tu_n(\alpha,y) \approx \sum_{p \in \cP_N} \sum_{q=0}^{N_y}
  \hat{u}_{p,q,n} e^{i p \cdot \alpha} 
  T_q \left( \frac{2z+a}{a} \right).
\ees
We then insert this form into \eqref{Eqn:Laplace:Full:QP:COV:n} and 
utilize the collocation approach as described in Section~\ref{Sec:HOS}.
Using well--known procedures involving the FFT and
the Fast Cheybshev Transform
\cite{GottliebOrszag77,CHQZ88,Boyd01,ShenTang06} the
coefficients $\{ \hat{u}_{p,q,n} \}$ are readily recovered. These, in turn,
can be used to approximate the DNO, $\tG$.

%
%

\subsection{Pad\'e Approximation}
\label{Sec:Pade}

An important question is how the Taylor series, e.g.\ \eqref{Eqn:TFE:Approx:N},
in $\Eps$ is summed, for instance the approximation of 
$\hat{u}_{p,q}(\Eps)$ by
\bes
\hat{u}_{p,q}^N(\Eps) 
  := \sum_{n=0}^{N} \hat{u}_{p,q,n} \Eps^n.
\ees
As we have seen in \cite{BrunoReitich93b,NichollsReitich00b,Nicholls21}, 
Pad\'e approximation 
\cite{BakerGravesMorris96} has been used in conjunction with HOPS methods
with great success and we recommend its use here.  Pad\'e approximation
estimates the truncated Taylor series $\hat{u}_{p,q}^N(\Eps)$
by the rational function
\bes
[L/M](\Eps) := \frac{a^L(\Eps)}{b^M(\Eps)}
  = \frac{\sum_{\ell=0}^{L} a_{\ell} \Eps^{\ell}}
  {1 + \sum_{m=1}^{M} b_m \Eps^m},
\quad
L+M=N,
\ees
and
\bes
[L/M](\Eps) = \hat{u}_{p,q}^N(\Eps) + \BigOh{\Eps^{L+M+1}};
\ees
classical formulas for the coefficients $\{a_{\ell},b_m\}$
can be found in \cite{BakerGravesMorris96}.
This method has stunning properties of enhanced convergence,
and we refer the interested reader to \S~2.2 of Baker \& Graves--Morris
\cite{BakerGravesMorris96} and the insightful calculations
of \S~8.3 of Bender \& Orszag \cite{BenderOrszag78} for a
thorough discussion of the capabilities and limitations of
Pad\'e approximants.

%
%

\section{Numerical Results}
\label{Sec:NumRes}

We now present numerical results which not only demonstrate the validity 
of our implementations of the three algorithms presented above 
(OE, FE, and TFE) but also illuminate their characteristics and behavior.
With each of these we see that for small and smooth interfaces (e.g., 
$\tf$ analytic and $\Eps \ll 1$) all three algorithms deliver highly
accurate solutions in a stable and efficient manner. However, for large
and rough profiles (e.g., $\tf$ Lipschitz and $\Eps = \BigOh{1}$) the 
enhanced stability properties of the TFE method 
\cite{NichollsReitich00a,NichollsReitich03b} and superior accuracy
of Pad\'e summation become extremely important to realize accurate solutions.

%
%

\subsection{The Method of Manufactured Solutions}
\label{Sec:MMS}

To test the validity of our implementation, we used the
Method of Manufactured Solutions \cite{Burggraf66,Roache02,Roy05}.
To summarize this, consider the general system of
partial differential equations subject to generic boundary conditions
\begin{align*}
& \cP v = 0, && \text{in $\Omega$}, \\
& \cB v = 0, && \text{at $\partial \Omega$}.
\end{align*}
It is typically just as easy to implement a numerical algorithm to solve
the nonhomogeneous version of this set of equations
\begin{align*}
& \cP v = \cF, && \text{in $\Omega$}, \\
& \cB v = \cJ, && \text{at $\partial \Omega$}.
\end{align*}
To validate our code we began with the ``manufactured solution,''
$\tilde{v}$, and set
\bes
\cF_{\tilde v} := \cP \tilde{v},
\quad
\cJ_{\tilde v} := \cJ \tilde{v}.
\ees
Thus, given the pair $\{ \cF_{\tilde v}, \cJ_{\tilde v} \}$ we had an \textit{exact}
solution of the nonhomogeneous problem, namely $\tilde{v}$. While this
does not prove an implementation to be correct, if the function $\tilde{v}$
is chosen to imitate the behavior of anticipated solutions (e.g.,
satisfying the boundary conditions exactly) then this gives us
confidence in our algorithm.

%
%

\subsection{Two--Dimensional Simulations}
\label{Sec:2D}

For a  two--dimensional fluid of infinite depth ($h = \infty$)
we considered the laterally doubly $2 \pi$--periodic function
\bes
\tvarphi^\exct(\alpha,y) = A_q e^{\Abs{K^T q} y} e^{i q \cdot \alpha},
\quad
K \in \Real^{2 \times 1},
\quad
q = \begin{pmatrix} q_1 \\ q_2 \end{pmatrix} \in \Integer^2,
\ees
from which we readily computed
\bse
\label{Eqn:MMS:Input}
\begin{align}
\txi^\exct(\alpha) & = \tvarphi^\exct(\alpha,\tg(\alpha)), \\
\tnu^\exct(\alpha) & = \py \tvarphi^\exct(\alpha,\tg(\alpha))
  - \left(K^T \Grada \tg(\alpha)\right) \cdot 
  K^T \Grada{\tvarphi^\exct(\alpha,\tg(\alpha))},
\end{align}
\ese
and, from these,
\bes
\varphi^\exct(x,y) = \tvarphi^\exct(K x,y),
\quad
\xi^\exct(x) = \txi^\exct(K x),
\quad
\nu^\exct(x) = \tnu^\exct(K x).
\ees
We chose the following physical parameters
\begin{gather}
A_q = -3,
\quad
q = \begin{pmatrix} 1 \\ 1 \end{pmatrix},
\quad
K = \begin{pmatrix} 1 \\ \kappa \end{pmatrix},
\quad
\kappa = \sqrt{2},
\label{Eqn:Params:Phys}
\end{gather}
and the numerical parameters
\be
\label{Eqn:Params:Num}
N_{\alpha} = \begin{pmatrix} 64 \\ 64 \end{pmatrix},
\quad
N_y = 16,
\quad
N = 16,
\quad
a = \frac{1}{10}.
\ee
To illuminate the behavior of our scheme we considered
\bes
\tg(\alpha) = \Eps \tf(\alpha),
\quad
\tf(\alpha) = \cos(\alpha_1) \sin(\alpha_2),
\ees
so that $g(x) = \Eps f(x)$, and studied three choices
\bes
\Eps = 0.02, 0.5, 1.
\ees
For this we supplied the ``exact'' input data, $\txi^\exct$,
from \eqref{Eqn:MMS:Input} to our HOPS algorithm and compared
the output of this, $\tnu^{\text{approx}}$, with the 
``exact'' output, $\tnu^\exct$, by computing the relative error
\be
\label{Eqn:RelErr}
\text{Error}_{\text{rel}} :=
  \frac{\SupNorm{\tnu^\exct - \tnu^{\text{approx}}}}
  {\SupNorm{\tnu^\exct}}.
\ee

To evaluate our implementation and demonstrate the behavior of each
of the three schemes, we report our results in
Figures~\ref{Fig:MMS:Small}, \ref{Fig:MMS:Medium}, and \ref{Fig:MMS:Large}.
More specifically, Figure~\ref{Fig:MMS:Small} 
($\Eps = 0.02$) shows both the rapid and stable decay of the relative error
for this small perturbation as $N$ is increased for all three algorithms.
By contrast, Figure~\ref{Fig:MMS:Medium} ($\Eps = 0.5$)
shows the extremely beneficial effects of using the TFE algorithm
for this much larger deformation, particularly in concert with Pad\'e
summation. Finally, in Figure~\ref{Fig:MMS:Large} ($\Eps = 1$)
we see the \textit{necessity} of using both the TFE method and Pad\'e 
summation to realize any accuracy for such a huge perturbation.
%
%
%
%
%
%
%
%
%
%
%
%
\begin{figure}[hbt]
\centering
\includegraphics[width=0.5\textwidth]{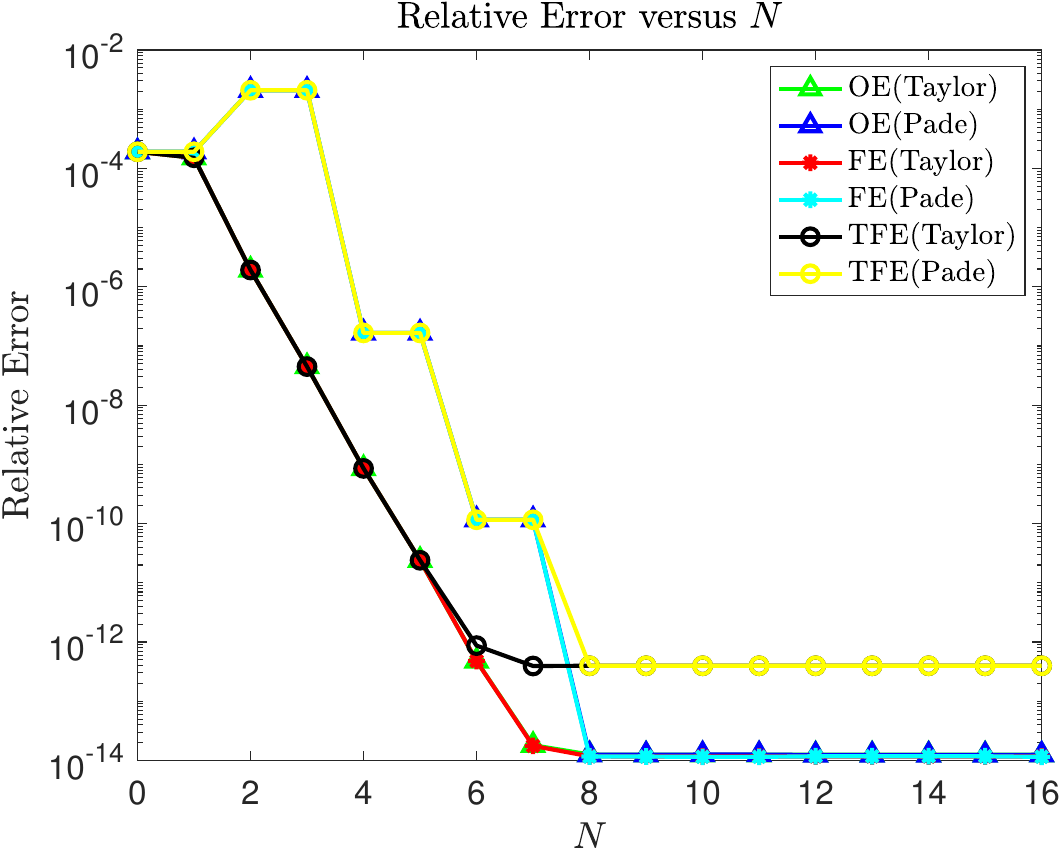}
\caption{Plot of relative error, \eqref{Eqn:RelErr}, for a small perturbation
  ($\Eps=0.02$) in the surface Neumann
  data for the three HOPS algorithms (OE, FE, TFE) using both Taylor and
  Pad\'e summation. Physical parameters were \eqref{Eqn:Params:Phys}
  and numerical discretization was \eqref{Eqn:Params:Num}.}
\label{Fig:MMS:Small}
\end{figure}
\begin{figure}[hbt]
\centering
\includegraphics[width=0.5\textwidth]{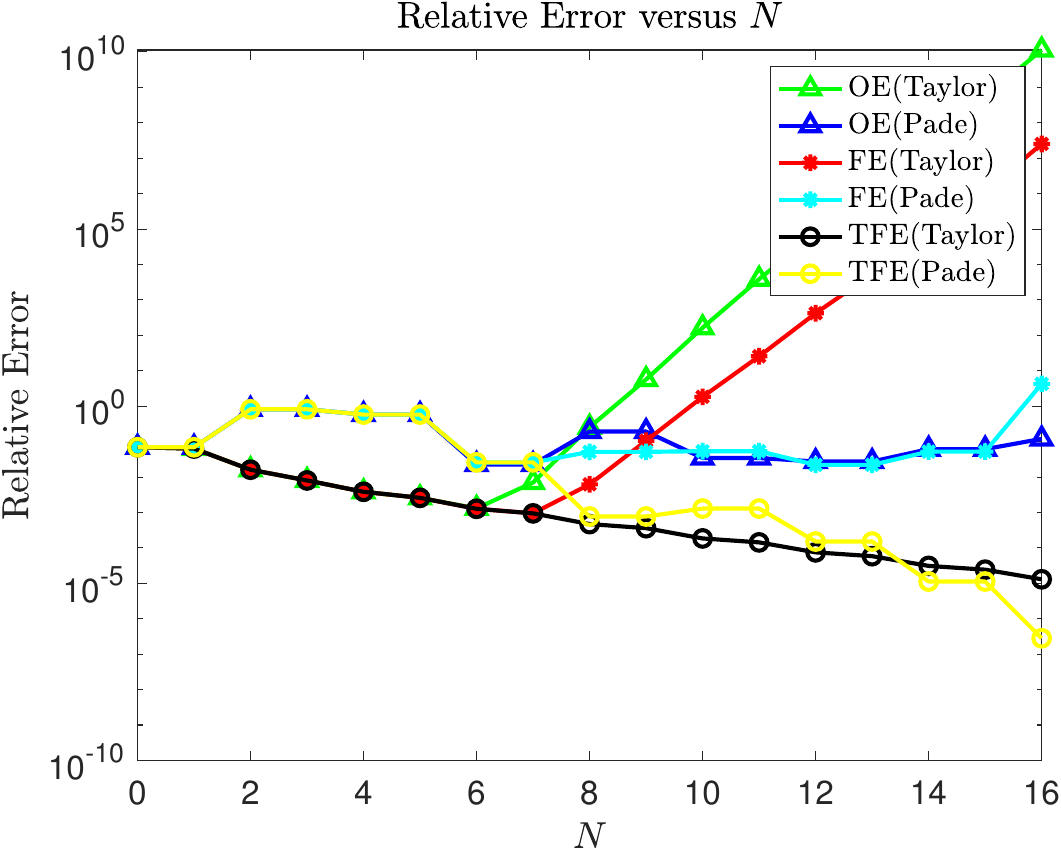}
\caption{Plot of relative error, \eqref{Eqn:RelErr}, for a medium perturbation
  ($\Eps=0.5$) in the surface Neumann
  data for the three HOPS algorithms (OE, FE, TFE) using both Taylor and
  Pad\'e summation. Physical parameters were \eqref{Eqn:Params:Phys}
  and numerical discretization was \eqref{Eqn:Params:Num}.}
\label{Fig:MMS:Medium}
\end{figure}
\begin{figure}[hbt]
\centering
\includegraphics[width=0.5\textwidth]{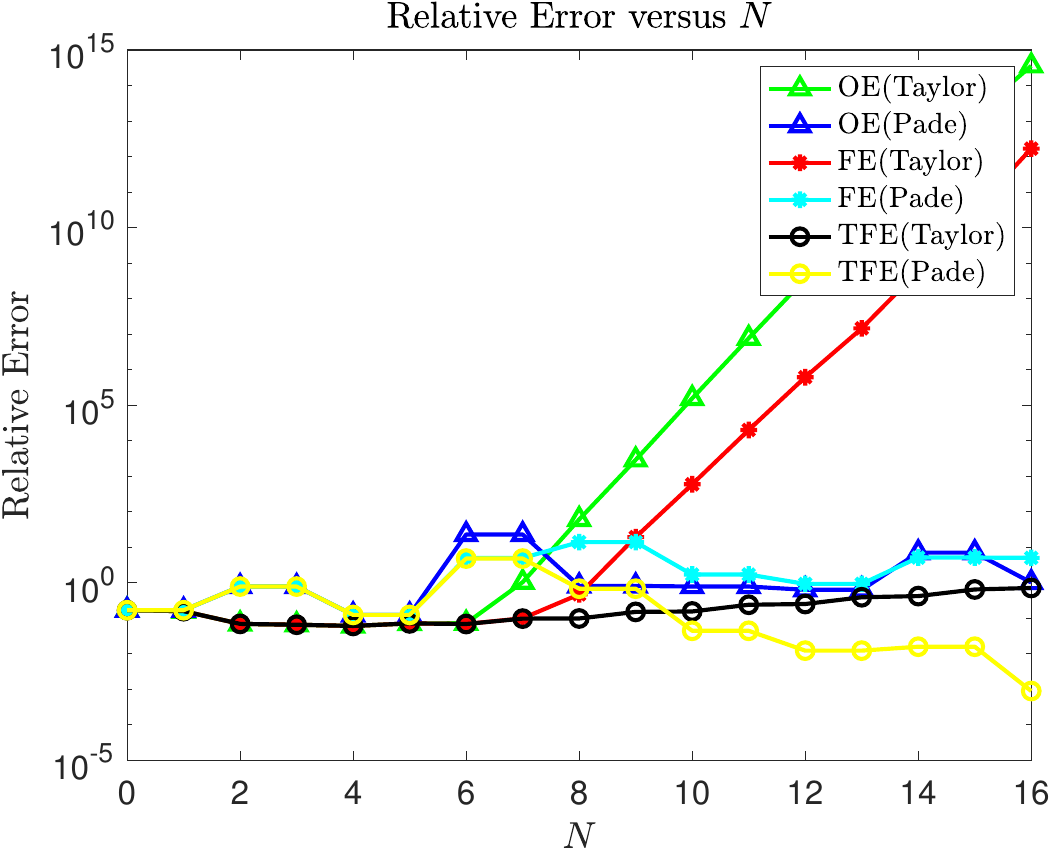}
\caption{Plot of relative error, \eqref{Eqn:RelErr}, for a large perturbation
  ($\Eps=1$) in the surface Neumann
  data for the three HOPS algorithms (OE, FE, TFE) using both Taylor and
  Pad\'e summation. Physical parameters were \eqref{Eqn:Params:Phys} 
  and numerical discretization was \eqref{Eqn:Params:Num}.}
\label{Fig:MMS:Large}
\end{figure}

%
%

\subsection{Three--Dimensional Simulations}
\label{Sec:3D}

We begin a description of our three--dimensional numerical simulations
with the infinite depth case ($h = \infty$) where we chose
the following triply $2 \pi$-periodic function
\bes
\tvarphi^\exct(\alpha,y) = \left\{ A_q e^{i q \cdot \alpha} 
  + \bar{A}_q e^{-i q \cdot \alpha} \right\} e^{\Abs{K^T q} y},
\quad
K \in \Real^{3 \times 2},
\quad
q = \begin{pmatrix} q_1 \\ q_2 \\ q_3 \end{pmatrix} \in \Integer^3.
\ees
As before, with this we can generate all of $\{ \txi^\exct, \tnu^\exct \}$ and 
$\{ \varphi^\exct, \xi^\exct, \nu^\exct \}$. We chose the physical parameters
\be
\label{Eqn:Params:Phys:3d}
A_q = -1, 
\quad 
K = \begin{pmatrix}
1 & 0\\
0 & -1\\
1/\sqrt{2} & 1/\sqrt{3}
\end{pmatrix},
\quad
q = \begin{pmatrix} 1 \\ 1 \\ 2 \end{pmatrix},
\ee
and the numerical parameters
\be
\label{Eqn:Params:Num:3d}
N_{\alpha} = \begin{pmatrix} 128 \\ 128 \\128 \end{pmatrix},
\quad
N_y = 128, 
\quad
N = 32,
\quad
a = 0.5.
\ee
Finally, we considered the profile
\bes
\tg(\alpha) = \Eps \tf(\alpha), \quad
\tf(\alpha) = \cos(\alpha_1) + \cos(\alpha_2) + \sin(\alpha_3).
\ees
We summarize our results in Figure~\ref{Fig:3D:h_infty} 
which focuses on the TFE algorithm utilizing Taylor summation alone.
Here we see that for the small perturbation, $\Eps=0.02$, we see
stable and rapid convergence to nearly machine zero after merely 4--5
perturbation orders. In the moderate deformation case, $\Eps=0.1$, we again
see the speedy convergence of our algorithm which achieves machine
precision by 15--16 perturbation orders. From extensive experimentation
we learned that $\Eps=0.02$ is \textit{outside} the disk of convergence
of our perturbation series which explains the slow, but noticeable divergence
of our results as $N$ is increased. (In fact, our experiments show that
$\Eps = 0.16$ is \textit{inside} the disk while $\Eps = 0.17$ is \textit{outside}.)
\begin{figure}[hbt]
\centering
\includegraphics[width=0.5\textwidth]{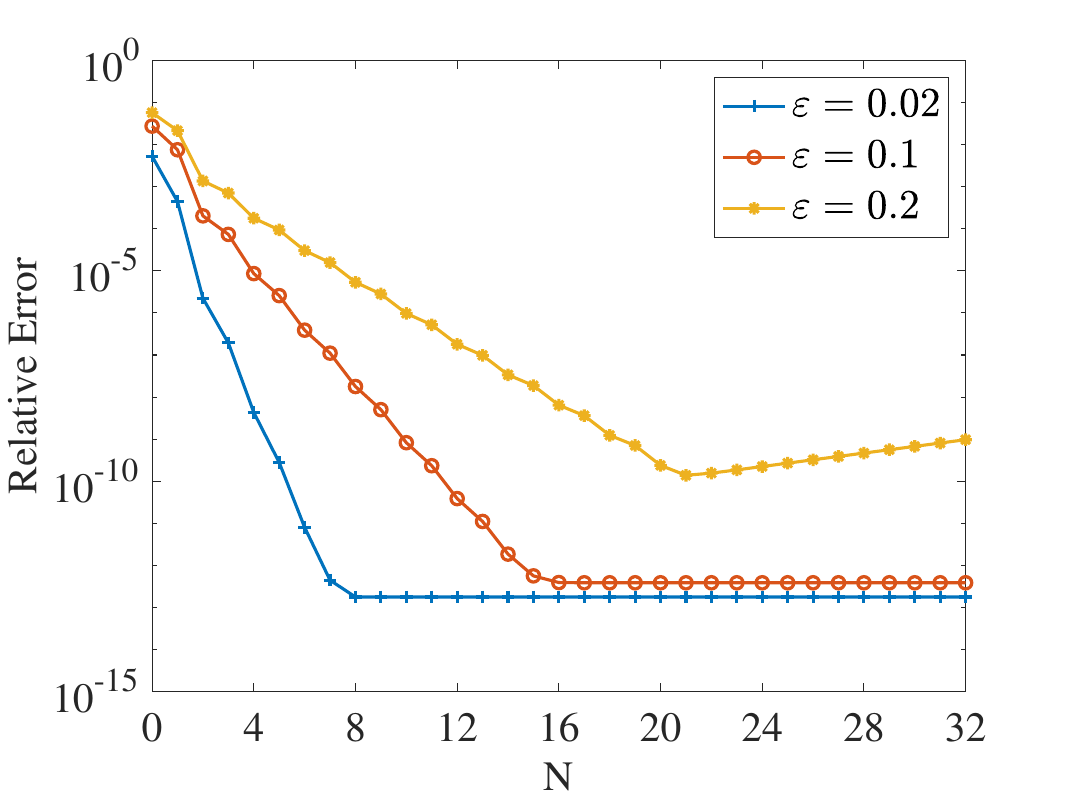}
\caption{Plot of relative error, \eqref{Eqn:RelErr}, in the surface Neumann
  data for the TFE algorithms using Taylor summation for perturbation
  sizes $\Eps = 0.01, 0.1, 0.2$. Physical parameters were 
  \eqref{Eqn:Params:Phys:3d} and numerical
  discretization was \eqref{Eqn:Params:Num:3d}.}
\label{Fig:3D:h_infty}
\end{figure}
To further illustrate the capabilities of our implementation we conducted
this simulation in the case $\Eps = 0.16$, right at the boundary of
convergence of the Taylor series \eqref{Eqn:tG:Exp}. In Figure~\ref{Fig:Field:3D}
on the left we show a plot of our simulation of the Neumann data, $\tnu(\alpha)$.
On the right of this we depict a slice of this Neumann data with a plane
that has normal vector $(1/\sqrt{2},-1/\sqrt{3},-1)^T$.
\begin{figure}
\centering
  \begin{subfigure}[b]{0.49\textwidth}
  \centering \includegraphics[width=\textwidth]{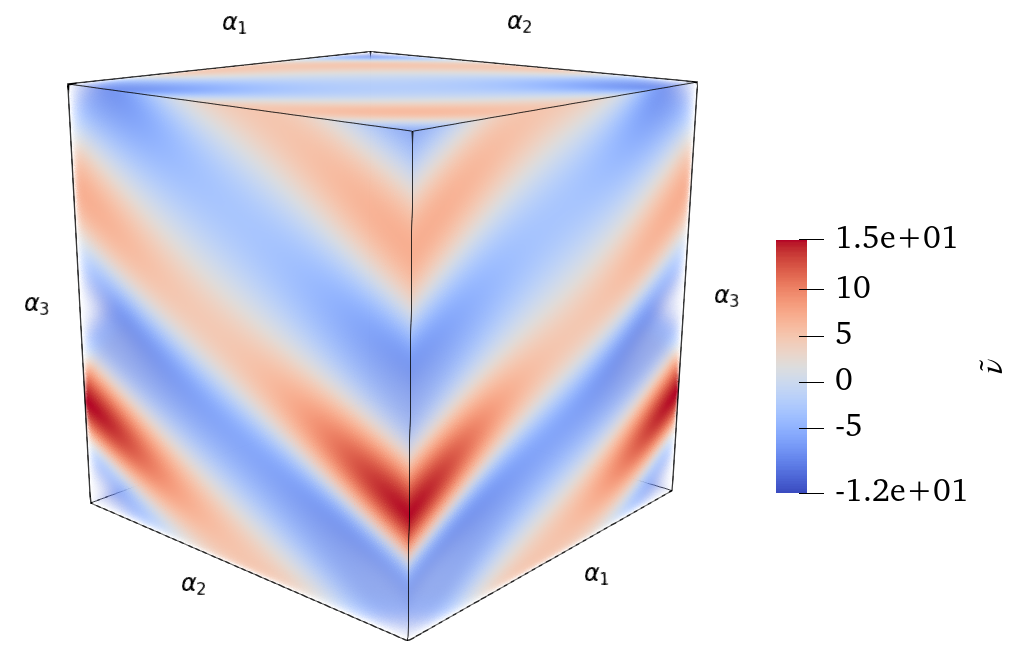}  
  \end{subfigure}
  \hfill
  \begin{subfigure}[b]{0.49\textwidth}
  \centering \includegraphics[width=\textwidth]{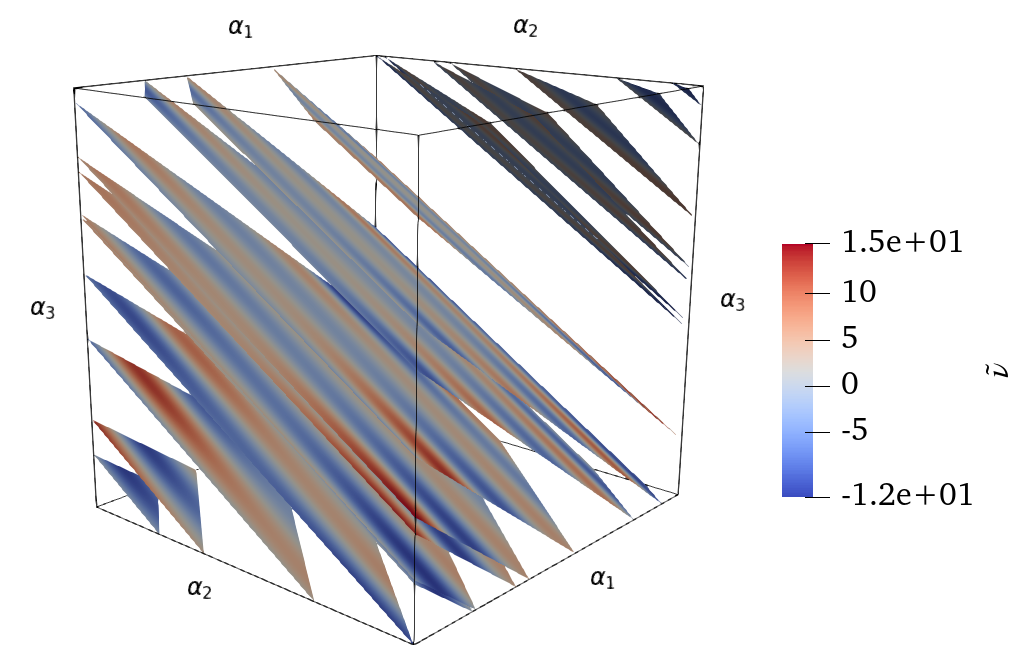}
  \end{subfigure}
  \caption{Plot of the computed Neumann data, $\tu$, for $\Eps=0.16$.
    (a) The full field; (b) A slice of the full field with a plane that has
    normal vector $(1/\sqrt{2}, -1/\sqrt{3}, -1)^T$.}
\label{Fig:Field:3D}
\end{figure}
This simulation produced Taylor corrections $\tnu_n$ and we depict in
Figure~\ref{Fig:Field:3D:n} contour plots of $\tnu_0$ (panel (a)),
$\tnu_2$ (panel (b)), $\tnu_4$ (panel (c)), and $\tnu_6$ (panel (d)).
\begin{figure}
\centering
\begin{subfigure}[b]{0.47\textwidth}
  \centering \includegraphics[width=\textwidth]{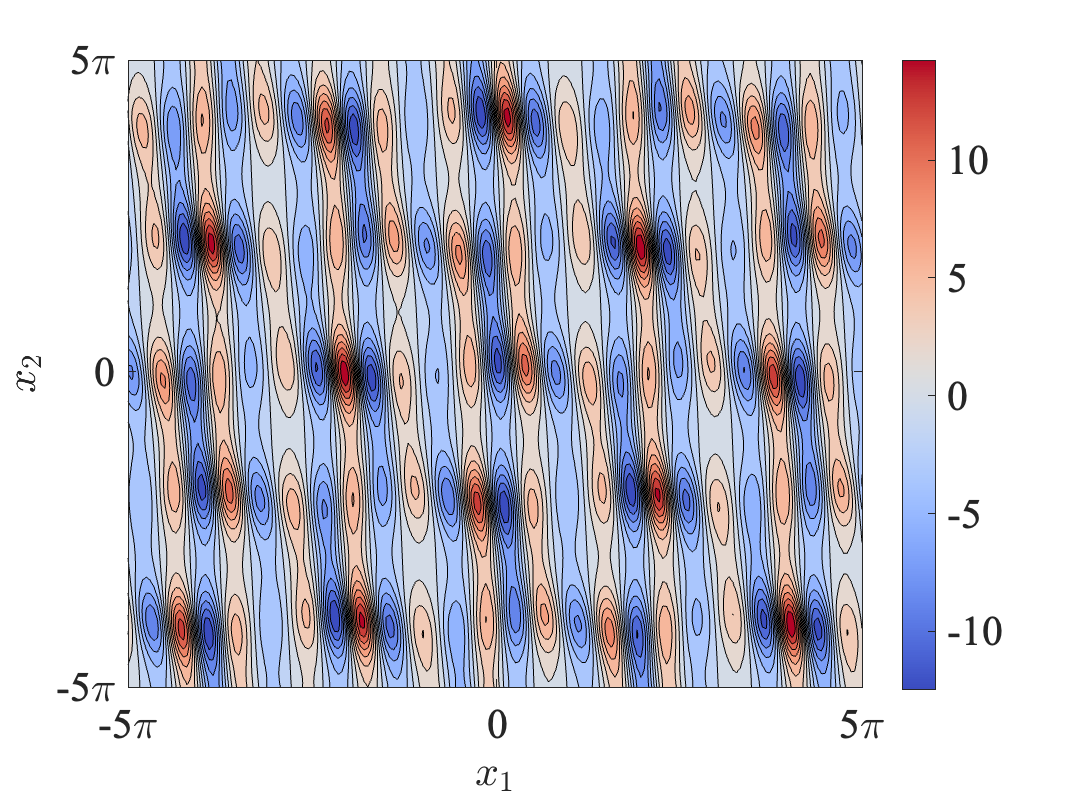} 
  \caption{$n=0$}
\end{subfigure}
\hfill
\begin{subfigure}[b]{0.47\textwidth}
  \centering \includegraphics[width=\textwidth]{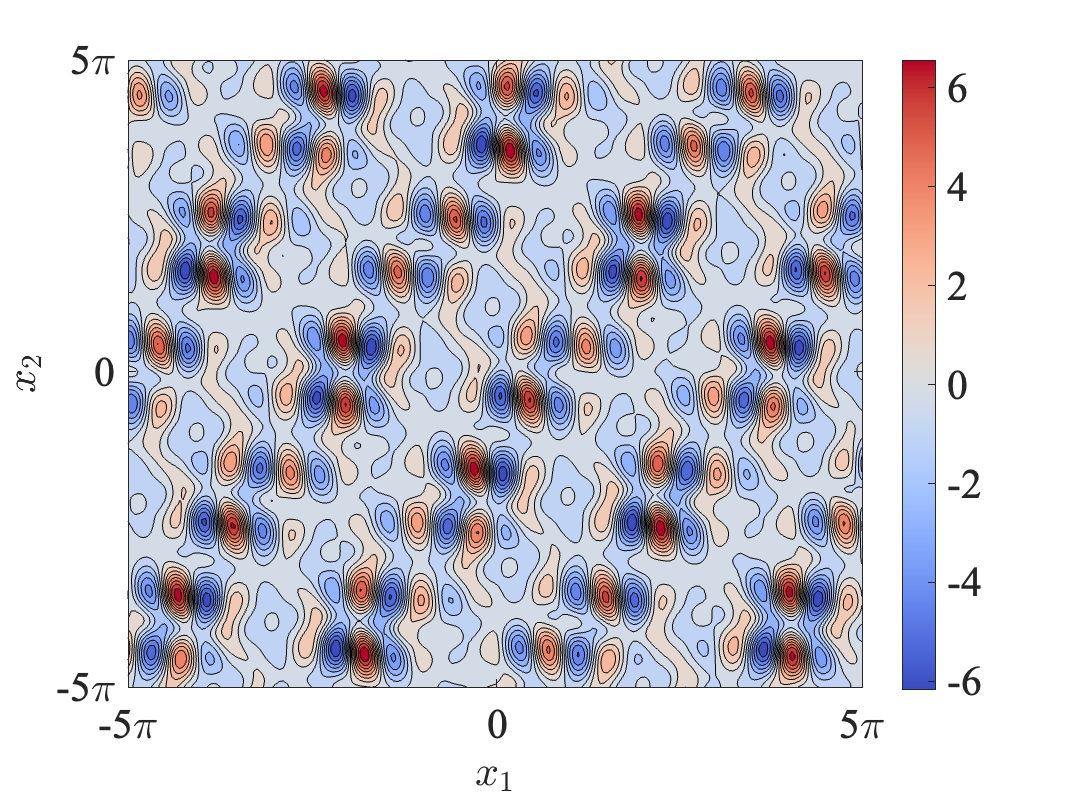}
  \caption{$n=2$}
\end{subfigure}

\begin{subfigure}[b]{0.47\textwidth}
  \centering \includegraphics[width=\textwidth]{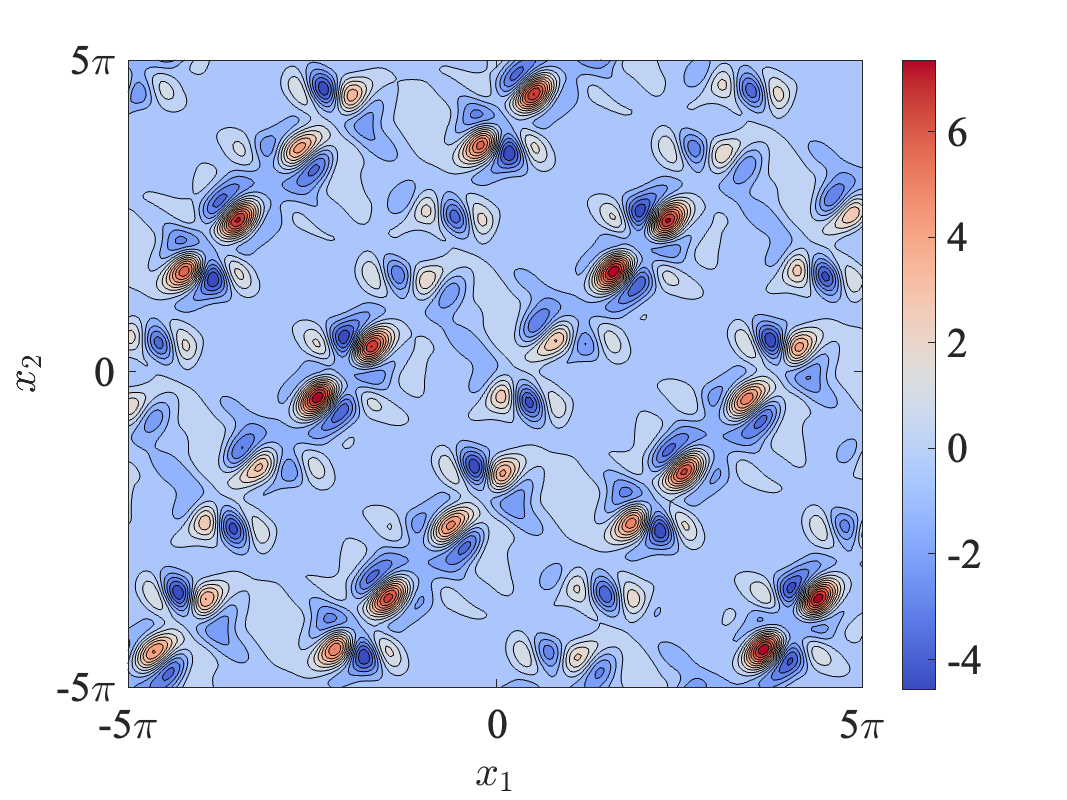} 
  \caption{$n=4$}
\end{subfigure}
\hfill
\begin{subfigure}[b]{0.47\textwidth}
  \centering \includegraphics[width=\textwidth]{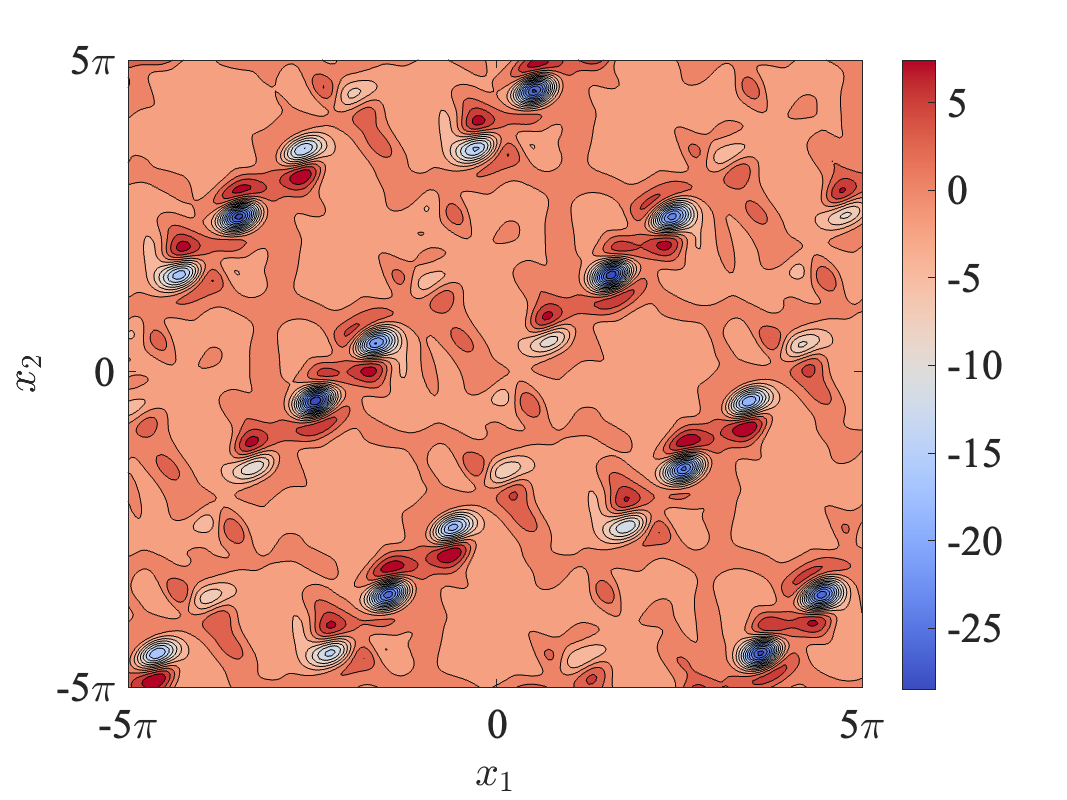}
  \caption{$n=6$}
\end{subfigure}   
\caption{Contour plots of the Taylor corrections $\tnu_n$ for
  $n=0$ (panel (a)), $n=2$ (panel (b)), $n=4$ (panel (c)),
  and $n=6$ (panel (d)).}
\label{Fig:Field:3D:n}
\end{figure}

We conclude with a description of our three--dimensional numerical
simulations in the finite depth case ($h = 1/4$) where we chose
the following triply $2 \pi$-periodic function
\bes
\tvarphi^\exct(\alpha,y) = \left\{ A_q e^{i q \cdot \alpha} 
  + \bar{A}_q e^{-i q \cdot \alpha} \right\}
  \frac{\cosh(\Abs{K^T q} (h+y))}{\cosh(\Abs{K^T q} h)},
\quad
K \in \Real^{3 \times 2},
\quad
q = \begin{pmatrix} q_1 \\ q_2 \\ q_3 \end{pmatrix} \in \Integer^3.
\ees
Once again, with this we can generate all of $\{ \txi^\exct, \tnu^\exct \}$ and 
$\{ \varphi^\exct, \xi^\exct, \nu^\exct \}$. We chose the physical parameters
\be
\label{Eqn:Params:Phys:3d:Finite}
A_q = -1, 
\quad 
K = \begin{pmatrix}
1 & 0\\
0 & -1\\
1/\sqrt{2} & 1/\sqrt{3}
\end{pmatrix},
\quad
q = \begin{pmatrix} 1 \\ 1 \\ 2 \end{pmatrix},
\ee
and the numerical parameters
\be
\label{Eqn:Params:Num:3d:Finite}
N_{\alpha} = \begin{pmatrix} 128 \\ 128 \\128 \end{pmatrix},
\quad
N_y = 128, 
\quad
N = 32,
\quad
a = 0.1.
\ee
Additionally, we considered the profile
\bes
\tg(\alpha) = \Eps \tf(\alpha), \quad
\tf(\alpha) = \cos(\alpha_1) + \cos(\alpha_2) + \sin(\alpha_3).
\ees
In Figure~\ref{Fig:3D:h_1} we display results from our simulations
using the TFE algorithm with Taylor summation.
As before, we see that for the small perturbation, $\Eps=0.02$, we have
stable and rapid convergence to nearly machine zero after merely 11--12
perturbation orders.
\begin{figure}[hbt]
\centering
\includegraphics[width=0.5\textwidth]{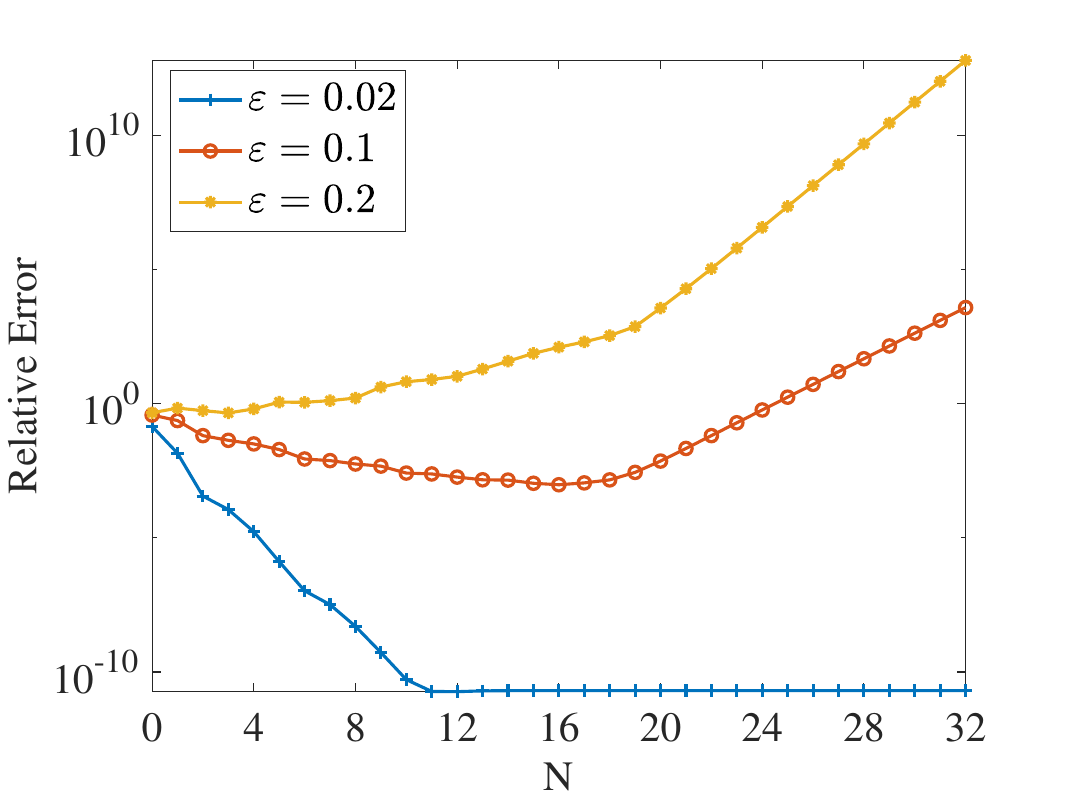}
\caption{Plot of relative error, \eqref{Eqn:RelErr}, in the surface Neumann
  data for the TFE algorithms using Taylor summation for perturbation
  sizes $\Eps = 0.01, 0.1, 0.2$. Physical parameters were 
  \eqref{Eqn:Params:Phys:3d:Finite} and numerical
  discretization was \eqref{Eqn:Params:Num:3d:Finite}.}
\label{Fig:3D:h_1}
\end{figure}
%

%
%

\section*{Acknowledgments}

D.P.N.\ gratefully acknowledges support from the National Science
Foundation through grant No.~DMS--1813033 and No.~DMS--2111283. J.W.\ gratefully acknowledges support from the National
  Science Foundation under award number DMS-1716560 and from the
  Department of Energy, Office of Science, Applied Scientific
  Computing Research, under award number DE-AC02-05CH11231. X.Z.\ gratefully acknowledges the high-performance computing resources provided by Digital Research Alliance of Canada.

%
%

\appendix

%
%

\section{Proof of the Elliptic Estimate}
\label{Sec:EllEst:Proof}

In this appendix we take up the proof of Theorem~\ref{Thm:EllEst}
which we restate here for completeness. Recall that we consider
the generic boundary value problem, \eqref{Eqn:BVP:QP},
\begin{align*}
& \Diva{ K K^T \Grada{\tu(\alpha,y)} } + \py^2 \tu(\alpha,y) 
  = \tF(\alpha,y), && -a < y < 0, \\
& \tu(\alpha,0) = \txi(\alpha), && \\
& \py \tu(\alpha,-a) - \tT[\tu(\alpha,-a)] = \tJ(\alpha), &&
\end{align*}
where
\bes
\tF(\alpha,y) := \Diva{ K \tFa(\alpha,y) } + \py \tF^y(\alpha,y) 
  + \tF^0(\alpha,y).
\ees
Recalling the maximum,
\begin{align*}
\cM_s[\tu] := \max \left\{ \right.
  &\XNorm{\tu}{s},
  \XNorm{\py \tu}{s},
  \XNorm{K^T \Grada{\tu}}{s},
  \\ 
  &\XNorm{\AKD^{1/2} \tu}{s},
  \XNorm{\AKD^{1/2} \py \tu}{s},
  \XNorm{\AKD^{1/2} K^T \Grada{\tu}}{s},
 \\ 
  &\left.\SobNorm{\py \tu(\alpha,0)}{s},
  \SobNorm{K^T \Grada{\tu}(\alpha,0)}{s},
  \SobNorm{\tT[\tu(\alpha,-a)]}{s} \right\}.
\end{align*}
the elliptic estimate we require (Theorem~\ref{Thm:EllEst})
can be stated as follows.
\begin{Thm}
Given an integer $s \geq 0$, provided that
\begin{gather*}
\txi \in H^{s+1}(P(\Gamma)),
\quad
\tJ \in H^s(P(\Gamma)),
\\
\tF^j, \AKD^{1/2} \tF^j \in X^s(\Omega),
\quad
\tF^j(\alpha,0) \in H^s(P(\Gamma)),
\quad
j \in \{ \alpha, y, 0 \},
\end{gather*}
and
\bes
\tF^y(\alpha,-a) = 0,
\ees
there exists a unique solution $\tu \in X^s(\Omega)$ of \eqref{Eqn:BVP:QP}
such that, for some $C_e > 0$,
\begin{multline*}
\cM_s[\tu] \leq C_e \left\{ \SobNorm{\txi}{s+1}
  + \SobNorm{\tJ}{s} 
\right. \\ \left.
  + \sum_{j \in \{ \alpha, y, 0 \}}
  \left( \XNorm{\tF^j}{s}
  + \XNorm{\AKD^{1/2} \tF^j}{s} 
  + \SobNorm{ \tF^j(\alpha,0) }{s} \right)
  \right\}.
\end{multline*}
\end{Thm}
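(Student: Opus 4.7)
The plan is to diagonalize the problem via Fourier series in $\alpha$, reducing it to a one--parameter family of two--point boundary value problems in $y$, and then reassemble the mode--by--mode bounds with the Sobolev weights $\Angle{p}^{2s}$. First I would remove the Dirichlet data: by lifting $\txi$ to a harmonic extension $\tE[\txi](\alpha,y)$ satisfying the transparent condition at $y=-a$ (which for each mode is simply the exponential/hyperbolic kernel already written down in \S~\ref{Sec:TBC}), one can subtract off $\tE[\txi]$ and absorb its contribution into modified source data controlled by $\SobNorm{\txi}{s+1}$. This reduces matters to the case $\txi\equiv 0$, in which case expanding $\tu(\alpha,y)=\sump \hat{u}_p(y)e^{ip\cdot\alpha}$ yields, for each $p$, the scalar ODE
\begin{equation*}
\py^2 \hat{u}_p - \AKp^2 \hat{u}_p = ip\cdot K\,\hat{F}^{\alpha}_p + \py \hat{F}^y_p + \hat{F}^0_p,
\quad \hat{u}_p(0)=0,\quad \py\hat{u}_p(-a)-\tau_p\hat{u}_p(-a)=\hat{J}_p,
\end{equation*}
where $\tau_p$ is the symbol of $\tT$ (so $\tau_p=\AKp$ or $\AKp\tanh((h-a)\AKp)$, both non--negative).

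The second step is the basic energy identity: multiply the ODE by $\overline{\hat{u}_p}$ and integrate over $(-a,0)$. Integration by parts on the left produces $\int |\py\hat u_p|^2 + \AKp^2\int|\hat u_p|^2$ plus the positive boundary contribution $\tau_p|\hat u_p(-a)|^2$ coming from the transparent condition. On the right the divergence--form pieces $ip\cdot K\,\hat F^\alpha_p$ and $\py\hat F^y_p$ are integrated by parts back onto $\hat u_p$; the boundary term from $\py\hat F^y_p$ at $y=-a$ is precisely what is killed by the hypothesis $\tF^y(\alpha,-a)=0$, while the boundary term at $y=0$ vanishes because $\hat u_p(0)=0$. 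A Cauchy--Schwarz/Young pairing then yields the mode--wise estimate
\begin{equation*}
\|\py\hat u_p\|_{L^2}^2+\AKp^2\|\hat u_p\|_{L^2}^2+\tau_p|\hat u_p(-a)|^2
\lesssim |\hat J_p|^2/\tau_p + \sum_{j}\|\hat F^j_p\|_{L^2}^2.
\end{equation*}
Multiplying by $\Angle{p}^{2s}$ and summing controls $\XNorm{\py\tu}{s}$, $\XNorm{K^T\Grada\tu}{s}$, and $\SobNorm{\tT[\tu(\cdot,-a)]}{s}$. To obtain $\XNorm{\tu}{s}$ itself (which cannot be recovered from $\XNorm{K^T\Grada\tu}{s}$ by Remark~\ref{Rk:AKD:Mapping}), I would write $\hat u_p$ explicitly via its one--dimensional Green's function $G_p(y,z)$ and estimate in $L^2(-a,0)$ directly; the key point is that $\|G_p\|$ is bounded uniformly in $p$ (the transparent condition prevents resonance), so no small--divisor loss appears. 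The boundary traces $\SobNorm{\py\tu(\cdot,0)}{s}$ and $\SobNorm{K^T\Grada\tu(\cdot,0)}{s}$ follow from the standard trace inequality combined with the interior bounds and the equation itself evaluated near $y=0$, which contributes the $\SobNorm{\tF^j(\cdot,0)}{s}$ terms on the right.

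To reach the $\AKD^{1/2}$--weighted quantities I would repeat the energy procedure with the test function $\AKp\,\overline{\hat u_p}$ in place of $\overline{\hat u_p}$; equivalently, apply $\AKD^{1/2}$ to the equation and pair with $\AKD^{1/2}\tu$. The positive boundary term is now $\tau_p\AKp|\hat u_p(-a)|^2$ and the left side becomes $\AKp\|\py\hat u_p\|^2+\AKp^3\|\hat u_p\|^2$, which after $\Angle{p}^{2s}$--summation gives $\XNorm{\AKD^{1/2}\py\tu}{s}$ and $\XNorm{\AKD^{1/2}K^T\Grada\tu}{s}$; on the right the natural sources are $\XNorm{\AKD^{1/2}\tF^j}{s}$, exactly as allowed by the statement. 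Existence and uniqueness follow from Lax--Milgram applied to the sesquilinear form arising from the first energy identity (the transparent BC contributes a non--negative symmetric term, coercivity on $H^1$ modulo constants is automatic once the $\hat u_p(0)=0$ constraint is imposed, and uniqueness is immediate since the trivial data forces every $\hat u_p\equiv 0$). The principal obstacle is precisely the one flagged in the remarks of \S~\ref{Sec:FcnSpaces}: because $\AKp$ can be arbitrarily close to zero for large $|p|$, one cannot trade a factor of $\AKp$ for a derivative or use $K^T\Grada\tu\in X^s$ to recover $\tu\in X^{s+1}$. Every quantity inside $\cM_s[\tu]$ must therefore be estimated by its own direct Fourier--mode argument, which is why the two weighted energy estimates (unweighted and $\AKp$--weighted) must be carried out in parallel rather than being deduced from one another.
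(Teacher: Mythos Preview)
Your overall architecture---Fourier diagonalisation to scalar two--point problems in $y$, then mode--wise estimates reassembled with $\Angle p^{2s}$ weights---matches the paper. The route to the mode--wise bounds, however, is genuinely different: the paper never uses an energy identity or Lax--Milgram, but instead writes the solution of each ODE explicitly (as $\hat u_p(y)=\hat\xi_pE_p(y)+\hat J_pS_p(y)$ plus Green's--function integrals $I_1,\ldots,I_4$ against the $\hat F^j_p$) and then computes the $L^2$ norms of the kernels $E_p,S_p,C_p$ and of the integral operators by hand, tracking the behaviour as $\AKp\to 0$ and $\AKp\to\infty$ separately. Your variational route is more conceptual and would adapt better to non--constant--coefficient situations; the paper's explicit calculation makes the small--$\AKp$ regime completely transparent.

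There is, however, a genuine gap in your energy estimate as written. The term $|\hat J_p|^2/\tau_p$ on the right of your displayed inequality cannot be summed against $\Angle p^{2s}$ to produce $\SobNorm{\tJ}{s}^2$, because $\tau_p=\AKp$ (or $\AKp\tanh((h-a)\AKp)$) can be arbitrarily close to zero for large $|p|$---this is exactly the small--divisor obstruction of Remark~\ref{Rk:AKD:Mapping}---and for $p=0$ one has $\tau_0=0$, so the bound is vacuous. The Young split $|\hat J_p\,\overline{\hat u_p(-a)}|\le \tfrac{\tau_p}{2}|\hat u_p(-a)|^2+\tfrac{1}{2\tau_p}|\hat J_p|^2$ is therefore the wrong pairing here. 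The fix is simple once noticed: since $\hat u_p(0)=0$, the fundamental theorem of calculus gives $|\hat u_p(-a)|\le \sqrt a\,\|\py\hat u_p\|_{L^2}$, so one should pair $\hat J_p\overline{\hat u_p(-a)}$ against $\|\py\hat u_p\|_{L^2}$ instead, producing $a|\hat J_p|^2$ with no $\tau_p^{-1}$. Analogous care is required in the $\AKp$--weighted pass: the naive Young pairing on the $\hat F^0_p$ source again threatens a negative power of $\AKp$, and you must feed in the already--established unweighted bound (or the Green's--function bound you already invoke for $\|\hat u_p\|_{L^2}$) to close. The paper's explicit--kernel approach sidesteps these absorption issues entirely, which is its main practical advantage in this degenerate setting.
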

\begin{proof}
We will focus on the case $h = \infty$ and note that the proof for the case
$h < \infty$ follows similarly. We expand the data and solution in Fourier series as,
\bes
\{ \tu, \tF^j \}(\alpha,y) = \sump \left\{ \hat{u}_p(y), \hat{F}^j_p(y) \right\}
  e^{i p \cdot \alpha},
\quad
\{ \txi, \tJ \}(\alpha) = \sump \left\{ \hat{\xi}_p, \hat{J}_p \right\}
  e^{i p \cdot \alpha},
\ees
giving,
\begin{align*}
& \py^2 \hat{u}_p(y) - \AKp^2 \hat{u}_p(y)
  = (i K^T p) \cdot \hFa_p(y) + \py \hat{F}_p^y(y)
  + \hat{F}_p^0(y),
  && -a < y < 0, \\
& \hat{u}_p(0) = \hat{\xi}_p, \\
& \py \hat{u}_p(-a) - \AKp \hat{u}_p(-a) = \hat{J}_p.
\end{align*}
To complete our proof we work with the exact solutions of these
which we now derive and estimate explicitly.
\begin{description}
%
%
\item[(a.) Inhomogeneous BCs:]
We begin with the case $\hat{F}^j \equiv 0$ and write
\bes
\hat{u}_p(y) = \hat{\xi}_p E_p(y) + \hat{J}_p S_p(y),
\ees
where
\bes
E_p(y) := \exp(\AKp y),
\quad
S_p(y) := 
\begin{cases}
y, & p = 0, \\
\frac{\sinh(\AKp y)}{\AKp \exp(\AKp a)}, & p\neq 0.
\end{cases}
\ees
From this we compute
\bes
\py \hat{u}_p(y) = \AKp \hat{\xi}_p E_p(y) + \hat{J}_p C_p(y),
\ees
where
\bes
C_p(y) := 
\begin{cases}
1, & p = 0, \\
\frac{\cosh(\AKp y)}{\exp(\AKp a)}, & p \neq 0.
\end{cases}
\ees
Computing the $L^2$ norms of $E_p(y)$, $S_p(y)$ and $C_p(y)$, we obtain
\begin{align*}
\Norm{E_p}{L^2}^2 & = \frac{1-\exp(-2 \AKp a)}{2 \AKp}, \\
\Norm{S_p}{L^2}^2 & = \begin{cases}
  a^3/3, & p = 0, \\
  \frac{-2 a \Abs{K^T p} + \sinh(2 a \Abs{K^T p})}
  {4 \exp(2 a \Abs{K^T p}) \Abs{K^T p}^3}, & p \neq 0, 
  \end{cases} \\
\Norm{C_p}{L^2}^2 & = \begin{cases}
  a, & p = 0, \\
  \frac{2 a \Abs{K^T p} + \sinh(2 a \Abs{K^T p})}
  {4 \exp(2 a \Abs{K^T p}) \Abs{K^T p}}, & p \neq 0,
  \end{cases}
\end{align*}
which have the following limits when $p \neq 0$: 
As $\AKp \rightarrow 0$ we have
\bes
\Norm{E_p}{L^2}^2 \rightarrow a,
\quad
\Norm{S_p}{L^2}^2 \rightarrow \frac{a^3}{3},
\quad
\Norm{C_p}{L^2}^2 \rightarrow a,
\ees
and, as $\AKp \rightarrow \infty$ we find
\bes
\Norm{E_p}{L^2}^2 \rightarrow \frac{1}{2} \AKp^{-1},
\quad
\Norm{S_p}{L^2}^2 \rightarrow \frac{1}{4} \AKp^{-3},
\quad
\Norm{C_p}{L^2}^2 \rightarrow \frac{1}{4} \AKp^{-1}.
\ees
From these, through interpolation it is not difficult to establish
the following result.
\begin{Lemma}
\label{lem:xi:J}
\begin{align*}
& \AKp^m \Norm{E_p}{L^2}^2 < C,
  && 0 \leq m \leq 1, \quad \forall\ p \in \Gamma', \\
& \AKp^m \Norm{S_p}{L^2}^2 < C,
  && 0 \leq m \leq 3, \quad \forall\ p \in \Gamma', \\
& \AKp^m \Norm{C_p}{L^2}^2 < C,
  && 0 \leq m \leq 1, \quad \forall\ p \in \Gamma',
\end{align*}
\end{Lemma}
In addition, we note that
\begin{gather*}
\hat{u}_p(0) = \hat{\xi}_p,
\quad
\py \hat{u}_p(0) = \AKp \hat{\xi}_p
  + \hat{J}_p \exp(-\AKp a),
\\
\hat{u}_p(-a) = \begin{cases}
  \hat{\xi}_0 - a \hat{J}_0, & p = 0, \\
  \hat{\xi}_p \exp(-\AKp a)
  - \hat{J}_p \frac{(1-\exp(-2 \AKp a))}{2 \AKp}, & p \neq 0.
  \end{cases}
\end{gather*}
We now estimate the eight terms in $\cM_s(\tu)$.
  \begin{description}
  %
  %
  \item[(i.) Field:]
  \begin{align*}
  \XNorm{\tu}{s}^2 & = \sump \Angle{p}^{2s} 
      \Norm{\hat{\xi}_p E_p(y) + \hat{J}_p S_p(y)}{L^2}^2 \\
    & \leq \sump 2 \Angle{p}^{2s} \left( \Abs{\hat{\xi}_p}^2 
    \Norm{E_p(y)}{L^2}^2 + \Abs{\hat{J}_p}^2 \Norm{S_p(y)}{L^2}^2 \right) \\
    & \leq 2 C \left(\SobNorm{\txi}{s}^2 + \SobNorm{\tJ}{s}^2\right),
  \end{align*}
  where the last inequality follows from Lemma~\ref{lem:xi:J}.
 In a similar manner we can show that
  \bes
  \XNorm{\AKD^{1/2} \tu}{s} \leq 2 C \left(\SobNorm{\txi}{s}^2 
    + \SobNorm{\tJ}{s}^2\right).
  \ees
  %
  %
  \item[(ii.) Vertical derivative:] 
  \begin{align*}
  \XNorm{\py \tu}{s}^2 & = \sump \Angle{p}^{2s} 
      \Norm{\AKp \hat{\xi}_p E_p(y) + \hat{J}_p C_p(y)}{L^2}^2 \\
    & \leq \sump 2 \Angle{p}^{2s} \left( \AKp\Abs{\hat{\xi}_p}^2 
      \AKp \Norm{E_p(y)}{L^2}^2
      + \Abs{\hat{J}_p}^2 \Norm{C_p(y)}{L^2}^2 \right) \\
    & \leq 2 C C' \left( \SobNorm{\txi}{s+1/2}^2 
      + \SobNorm{\tJ}{s}^2 \right),
  \end{align*}
  where the last inequality follows from Lemma~\ref{lem:xi:J}
  and $\AKp \leq C' \Abs{p}$ for some $C'\geq 1$ and for any $p \in \Gamma'$.
  Similarly, one can show that 
  \begin{align*}
   & \XNorm{\AKD^{1/2} \py \tu}{s}^2 \leq 2 C C'^2 
     \left( \SobNorm{\txi}{s+1}^2 + \SobNorm{\tJ}{s}^2 \right), \\
   & \SobNorm{\py \tu (0)}{s}^2 \leq 2 C'^2 
     \left( \SobNorm{\txi}{s+1}^2 + \SobNorm{\tJ}{s}^2 \right).
  \end{align*}
  %
  %
  \item[(iii.) Horizontal derivative:] 
  \begin{align*}
  \XNorm{K^T \Grada{\tu}}{s}^2
    & = \sump \Angle{p}^{2s} \AKp^2 \Norm{\hat{\xi}_p E_p(y) + 
     \hat{J}_p S_p(y)}{L^2}^2 \\
    & \leq \sump 2 \Angle{p}^{2s} 
      \left( \AKp\Abs{\hat{\xi}_p}^2 \AKp\Norm{E_p(y)}{L^2}^2
    \right. \\ & \quad \left.
      + \Abs{\hat{J}_p}^2 \AKp^2\Norm{S_p(y)}{L^2}^2 \right) \\
    & \leq 2 C C'\left( \SobNorm{\txi}{s+1/2}^2 
      + \SobNorm{\tJ}{s}^2\right).
  \end{align*}
  Similarly, one can prove that
  \begin{align*}
  & \XNorm{\AKD^{1/2} K^T \Grada{\tu}}{s}^2 
    \leq 2C C'^2\left(\SobNorm{\txi}{s+1}^2 + \SobNorm{\tJ}{s}^2\right), \\
  & \SobNorm{K^T\Grada{\tu}(0)}{s}^2 \leq C'^2 \SobNorm{\txi}{s+1}^2.
  \end{align*}
  %
  %
  \item[(iv.) DNO at the lower boundary:]
  Moving to $y=-a$ we have
  \begin{align*}
  \SobNorm{\tT[ \tu(-a) ]}{s}^2
    & = \sump \Angle{p}^{2s} \Abs{ \hat{\xi}_p \AKp \exp(-\AKp a)
    \right. \\ & \quad \left.
      - \frac{1}{2} \hat{J}_p (1-\exp(-2 \AKp a)) }^2 \\
    & \leq \sump 2 \Angle{p}^{2s} \left(\AKp^2 \Abs{\hat{\xi}_p}^2
      + \Abs{\hat{J}_p}^2 \right) \\
    & \leq 2 C'^2 \left( \SobNorm{\txi}{s+1}^2 + \SobNorm{\tJ}{s}^2
      \right).
  \end{align*}
  \end{description}
%
%
\item[(b.) Inhomogeneous PDE  ($\tF^0 \not \equiv 0$):]
We move to the case 
$\hat{\xi}_p \equiv \hat{J}_p \equiv 0$ and
$\hat{F}^j \equiv 0$, $j \in \{\alpha, y\}$.
For $\tF = \tF^0$, we have
\bes
\hat{u}_p(y) = 
  \exp(\AKp a) \left\{ \int_y^0 E_p(y) S_p(t) \hat{F}_p(t) \dt
    + \int_{-a}^y E_p(t) S_p(y) \hat{F}_p(t) \dt \right\}.
\ees
In Appendix~\ref{App:InhomogPDE} we prove the following
estimates of this function and its derivatives.
\begin{Lemma}
\label{lem:F}
\begin{align*}
& \AKp^m \Norm{\hat u_p}{L^2} \leq C \Norm{\hat{F}_p}{L^2},
  && 0 \leq m \leq 2, & \forall\ p \in \Gamma', \\
& \AKp^m \Norm{\py\hat u_p}{L^2} \leq C \Norm{\hat{F}_p}{L^2},
  && 0 \leq m \leq 1, & \forall\ p \in \Gamma', \\
& \AKp^m \Abs{\py\hat u_p(0)} \leq C \Norm{\hat{F}_p}{L^2},
  && 0 \leq m \leq 1/2, & \forall\ p \in \Gamma', \\
& \AKp^m \Abs{\hat u_p(-a)} \leq C \Norm{\hat{F}_p}{L^2},
  && 0 \leq m \leq 3/2, & \forall\ p \in \Gamma'.
\end{align*}
\end{Lemma}
Using Lemma~\ref{lem:F} we can prove the following estimates in a 
rather straightforward way.
  \begin{description}
  %
  %
  \item[(i.) Field:]
  \bes
  \max \left\{ \XNorm{\tu}{s}, \XNorm{\AKD^{1/2} \tu}{s} \right\}
    \leq C \XNorm{\tF^0}{s}.
  \ees
  %
  %
  \item[(ii.) Vertical derivative:] 
  \bes
  \max \left\{ \XNorm{\py\tu}{s}, \XNorm{\AKD^{1/2} \py \tu}{s},
  \SobNorm{\py \tu (0)}{s}^2 \right\} \leq C \XNorm{\tF^0}{s}.
  \ees
  %
  %
  \item[(iii.) Horizontal derivative:] 
  \bes
  \max \left\{ \XNorm{K^T \Grada{\tu}}{s},
  \XNorm{\AKD^{1/2}K^T \Grada{\tu}}{s}, 
  \SobNorm{K^T \Grada{\tu}(0)}{s} \right\}
    \leq C \XNorm{\tF^0}{s}.
  \ees
  %
  %
  \item[(iv.) DNO at the lower boundary:]
  \bes
  \SobNorm{\tT[ \tu(-a) ]}{s} 
  = \SobNorm{\AKD\tu(-a)}{s} \leq C \XNorm{\tF^0}{s}.
  \ees
  \end{description}
%
%
\item[(c.) Inhomogeneous PDE ($\tFa \not \equiv 0$):]
We move to the case $\hat{\xi}_p \equiv \hat{J}_p \equiv 0$ and
$\hat{F}^j \equiv 0$, $j \in \{0, y\}$. Setting $\tF = \tFa$, we have
\begin{align*}
\hat{u}_p(y) & = \exp(\AKp a) \left\{
  \int_y^0 E_p(y) S_p(t) (i K^T p) \hat{F}_p(t) \dt 
  \right. \\ & \quad \left.
  + \int_{-a}^y E_p(t) S_p(y) (i K^T p) \hat{F}_p(t) \dt \right\}.
\end{align*}
By simply replacing $\hat F_p$ by $(i K^T p) \hat{F}_p$ in 
Lemma~\ref{lem:F} we obtain the following estimates.
\begin{Lemma}
\begin{align*}
& \AKp^m \Norm{\hat u_p}{L^2} \leq C \Norm{\hat{F}_p}{L^2},
  && 0 \leq m \leq 1, & \forall\ p \in \Gamma', \\
& \Norm{\py\hat u_p}{L^2} \leq C \Norm{\hat{F}_p}{L^2},
  && & \forall\ p \in \Gamma', \\
& \Abs{\py\hat u_p(0)} \leq C \AKp^{1/2} \Norm{\hat{F}_p}{L^2},
  && & \forall\ p \in \Gamma', \\
& \AKp^m \Abs{\hat u_p(-a)} \leq C \Norm{\hat{F}_p}{L^2},
  && 0 \leq m \leq 1/2, & \forall\ p \in \Gamma'.
\end{align*}
\end{Lemma}
With this it is not difficult to show the following.
  \begin{description}
  %
  %
  \item[(i.) Field:]
  \bes
  \max \left\{ \XNorm{\tu}{s}, \XNorm{\AKD^{1/2} \tu}{s} \right\}
    \leq C \XNorm{\tFa}{s}.
  \ees
  %
  %
  \item[(ii.) Vertical derivative:] 
  \begin{align*}
  & \XNorm{\py\tu}{s} \leq C \XNorm{\tFa}{s}, \\
  & \max \left\{ \XNorm{\AKD^{1/2} \py \tu}{s},
  \SobNorm{\py \tu (0)}{s}^2 \right\} \leq C \XNorm{\AKD^{1/2}\tFa}{s}.
  \end{align*}
  %
  %
  \item[(iii.) Horizontal derivative:] 
  \begin{align*}
  & \XNorm{K^T \Grada{\tu}}{s} \leq C \XNorm{\tFa}{s}, \\
  & \left\{ \XNorm{\AKD^{1/2}K^T \Grada{\tu}}{s},
    \XNorm{K^T \Grada{\tu}(0)}{s} \right\}
    \leq C \XNorm{\AKD^{1/2}\tFa}{s}.
  \end{align*}
  %
  %
  \item[(iv.) DNO at the lower boundary:]
  \bes
  \SobNorm{\tT[ \tu(-a) ]}{s} 
  = \SobNorm{\AKD\tu(-a)}{s} \leq C \XNorm{\AKD^{1/2}\tFa}{s}.
  \ees
  \end{description}
%
%
\item[(d.) Inhomogeneous PDE ($\tF^y \not \equiv 0$):]
We move to the case the case $\hat{\xi}_p \equiv \hat{J}_p \equiv 0$ and
$\hat{F}^j \equiv 0$, $j \in \{0, \alpha\}$. For $\tF = \tF^y$, we have
\bes
\hat{u}_p(y) = \exp(\AKp a) \left\{
  \int_y^0 E_p(y) S_p(t) \hat{F}_p'(t) \dt
  + \int_{-a}^y E_p(t) S_p(y) \hat{F}'_p(t) \dt \right\}.
\ees
Writing this as 
\bes
\hat{u}_p(y) = I_1[\py \hat{F}_p](y) - I_2[\py \hat{F}_p](y) 
  + I_3[\py \hat{F}_p](y) - I_4[\py \hat{F}_p](y).
\ees
Using Integration--by--Parts, together with $\hat{F}^y_p(-a) = 0$,
we discover
\bes
\hat{u}_p(y) = \AKp \left\{-I_1[\hat{F}_p](y) - I_2[\hat{F}_p](y)
  - I_3[\hat{F}_p](y) + I_4[\hat{F}_p](y) \right\}.
\ees
From this we compute
\bes
\py \hat{u}_p(y) = \AKp^2 \left\{ -I_1[\hat{F}_p](y) - I_2[\hat{F}_p](y)
  - I_3[\hat{F}_p](y) - I_4[\hat{F}_p](y) \right\} + \hat{F}_p(y),
\ees
which gives
\begin{gather*}
\hat{u}_p(0) = 0,
\quad
\hat{u}_p(-a) = \AKp \left\{ -I_1(-a) - I_2(-a) \right\},
\\
\py \hat{u}_p(0) = -2\AKp^2 I_3(0) + \hat{F}_p(0).
\end{gather*}
Following the proof of Lemma~\ref{lem:F} we can show the following bounds.
\begin{Lemma}
\begin{align*}
& \AKp^m \Norm{\hat u_p}{L^2} \leq C \Norm{\hat{F}_p}{L^2},
  && 0 \leq m \leq 1, & \forall\ p \in \Gamma', \\
& \Norm{\py\hat u_p}{L^2} \leq C \Norm{\hat{F}_p}{L^2},
  && & \forall\ p \in \Gamma', \\
& \Abs{\py\hat u_p(0)} \leq C \AKp^{1/2} \Norm{\hat{F}_p}{L^2},
  && & \forall\ p \in \Gamma', \\
& \AKp^m \Abs{\hat u_p(-a)} \leq C \Norm{\hat{F}_p}{L^2},
  && 0 \leq m \leq 1/2, & \forall\ p \in \Gamma'.
\end{align*}
\end{Lemma}
With this we obtain the following estimates.
\begin{description}
  %
  %
  \item[(i.) Field:] 
  \bes
  \max \left\{ \XNorm{\tu}{s}, \XNorm{\AKD^{1/2} \tu}{s} \right\}
    \leq C \XNorm{\tF^y}{s}.
  \ees
  %
  %
  \item[(ii.) Vertical derivative:] 
  \begin{align*}
  & \XNorm{\py\tu}{s} \leq C \XNorm{\tF^y}{s}, \\
  & \max \left\{ \XNorm{\AKD^{1/2} \py \tu}{s},
  \SobNorm{\py \tu (0)}{s}^2 \right\} \leq C \XNorm{\AKD^{1/2}\tF^y}{s}.
  \end{align*}
  %
  %
  \item[(iii.) Horizontal derivative:] 
  \begin{align*}
  & \XNorm{K^T \Grada{\tu}}{s} \leq C \XNorm{\tF^y}{s}, \\
  & \max \left\{ \XNorm{\AKD^{1/2}K^T \Grada{\tu}}{s},
  \XNorm{K^T \Grada{\tu}(0)}{s} \right\} \leq C \XNorm{\AKD^{1/2}\tF^y}{s}.
  \end{align*}
  %
  %
  \item[(iv.) DNO at the lower boundary:]
  \bes
  \SobNorm{\tT[ \tu(-a) ]}{s} 
    = \SobNorm{\AKD\tu(-a)}{s} \leq C \XNorm{\AKD^{1/2}\tF^y}{s}.
  \ees
  \end{description}
\end{description}
\end{proof}

%
%

\section{Proof of Lemma~\ref{lem:F}}
\label{App:InhomogPDE}

We now provide the full proof of Lemma~\ref{lem:F}.
\begin{proof}
The proof for the case when $p = 0$ is straightforward and we focus 
on the case when $p \neq 0$. To simplify our developments we write
\bes
\hat{u}_p(y) = I_1[\hat{F}_p](y) - I_2[\hat{F}_p](y) 
    + I_3[\hat{F}_p](y) - I_4[\hat{F}_p](y),
\ees
where
\begin{align*}
I_1[\hat{F}_p](y) & := \frac{\exp(\AKp y)}{2 \AKp}
  \int_y^0 \exp(\AKp t) \hat{F}_p(t) \dt, \\
I_2[\hat{F}_p](y) & := \frac{\exp(\AKp y)}{2 \AKp}
  \int_y^0 \exp(-\AKp t) \hat{F}_p(t) \dt, \\
I_3[\hat{F}_p](y) & := \frac{\exp(\AKp y)}{2 \AKp}
  \int_{-a}^y \exp(\AKp t) \hat{F}_p(t) \dt, \\
I_4[\hat{F}_p](y) & := \frac{\exp(-\AKp y)}{2 \AKp}
  \int_{-a}^y \exp(\AKp t) \hat{F}_p(t) \dt.
\end{align*}
From this we obtain
\begin{gather*}
\py \hat{u}_p(y) = \AKp \left\{ I_1[\hat{F}_p](y) - I_2[\hat{F}_p](y) 
  + I_3[\hat{F}_p](y) + I_4[\hat{F}_p](y) \right\},
\\
\hat{u}_p(0) = 0,
\quad 
\hat{u}_p(-a) = I_1[\hat{F}_p](-a) - I_2[\hat{F}_p](-a),
\\
\py \hat{u}_p(0) = 2\AKp \left\{ I_3[\hat{F}_p](0)\right\}.
\end{gather*}

Computing the $L^2$ norm of $I_1[\hat{F}_p](y) - I_2[\hat{F}_p](y)$,
we obtain 
\begin{align*}
\Norm{I_1[\hat{F}_p](y) - I_2[\hat{F}_p](y)}{L^2}^2 
& = 
\int_{-a}^0 \exp(\AKp 2a) \dy \Abs{
  \int_y^0 E_p(y) S_p(t) \hat{F}_p(t) \dt}^2\\
  & =
\frac{1}{\Abs{K^T p}^2}\int_{-a}^0 \exp(2\Abs{K^Tp}y) \; dy
\left(\int_y^0\sinh(\Abs{K^T p}t) \hat{F}_p(t)\; dt\right)^2 \\
& \leq \frac{1}{\Abs{K^T p}^2}\int_{-a}^0 \exp(2\Abs{K^Tp}y) \; dy
\int_y^0 \sinh^2(\Abs{K^T p}t)\;dt
\int_y^0 |\hat{F}_p(t)|^2 \; dt \\
&\leq \frac{a}{\Abs{K^T p}^2} 
\Norm{\hat{F}_p}{L^2}^2
\int_{-a}^0 \exp(2\Abs{K^Tp}y) \sinh^2(\Abs{K^T p}y) \; dy,
\end{align*}
since
\bes
\int_y^0 \sinh^2(\AKp t) \dt
  \leq \sinh^2(\AKp y) \int_y^0 \dt
  \leq a \sinh^2(\AKp y).
\ees
Continuing
\bes
\Norm{I_1[\hat{F}_p](y) - I_2[\hat{F}_p](y)}{L^2}^2 
  \leq a f\left(\AKp\right)
  \Norm{\hat{F}_p}{L^2}^2 \\
  \leq C \Norm{\hat{F}_p}{L^2}^2,
\ees
where
\begin{align*}
f\left(\AKp\right) &= -
  \frac{1}{16\AKp^3}\left(\exp(-4\Abs{K^T p}a)\big)
  - 4\exp(-2\Abs{K^T p}a) - 4a\AKp + 3\right) \\
&\to 
\begin{cases}
\frac{a^3}{3}, & \AKp \to 0, \\
\frac{a}{4 \AKp^2}, & \AKp \to \infty.
\end{cases}
\end{align*}
Similarly, one can prove that
\bes 
\Norm{I_3[\hat{F}_p](y) - I_4[\hat{F}_p](y)}{L^2}^2 
  \leq C \Norm{\hat{F}_p}{L^2}^2,
\ees
thus, we have
\be
\label{eq:up:L2:F}
\Norm{\hat u_p}{L^2}
  \leq C \Norm{\hat{F}_p}{L^2}.
\ee

Next, we are going to show that
\bes
\AKp^2 \Norm{\hat u_p}{L^2} \leq C \Norm{\hat{F}_p}{L^2}.
\ees
Computing the squared $L^2$ norm of 
$\AKp^2 I_1[\hat F_p]$ we have
\begin{align*}
\AKp^4\Norm{I_1[\hat F_p]}{L^2}^2 & =
\frac{\Abs{K^T p}^2}{4}\int_{-a}^0 \exp(2\Abs{K^Tp}y) \dy
\Abs{\int_y^0 \exp(\Abs{K^T p}t)\hat{F}_p(t)\dt}^2 \\
& \leq \frac{\Abs{K^T p}^2}{4} \int_{-a}^0 \exp(2\Abs{K^Tp}y)\dy
\int_y^0 \exp(2\Abs{K^T p}t)\dt
\int_y^0 |\hat{F}_p(t)|^2 \dt \\
&\leq \frac{\Abs{K^T p}^2}{4} 
\Norm{\hat{F}_p}{L^2}^2
\int_{-a}^0 \exp(2\Abs{K^Tp}y) \dy
\int_{-a}^0 \exp(2\Abs{K^T p}t)\dt \\
&\leq \frac{1}{16} \Norm{\hat{F}_p}{L^2}^2
\left(1 - \exp(-2\Abs{K^Tp}a)\right)^2 \\
&\leq C \Norm{\hat{F}_p}{L^2}^2.
\end{align*}
An estimation of $L^2$ norm of $\AKp^2 I_2[\hat F_p]$
proceeds as follows
\begin{align*}
\AKp^2\Norm{I_2[\hat F_p]}{L^2} & = 
\frac{\AKp}{2} \Big(
\int_{-a}^0 \dy 
\Abs{\int_y^0 \exp\left(\Abs{K^Tp}(y-t)\right)\hat{F}_p(t)\dt}^2
\Big)^{1/2} \\
& \leq \frac{\AKp}{2} \left(
\int_{-a}^0 \dy 
\left(\int_{-a}^0 \exp\left(\Abs{K^Tp}(y-t)\right)\Abs{\hat{F}_p(t)}\dt\right)^2
\right)^{1/2} \\
& \leq \frac{\AKp}{2}
\int_{-a}^0 \exp(\Abs{K^Tp}y) \dy 
\Big(
 \int_{-a}^0 \Abs{\hat{F}_p(y)}^2 \dy
\Big)^{1/2}\\
& = \frac{1}{2}(1 - \exp(-\AKp a))\Norm{\hat{F}_p}{L^2} \\
& \leq C \Norm{\hat{F}_p}{L^2}.
\end{align*}
Similarly, we can show that
\bes
\AKp^2\Norm{I_j[\hat F_p]}{L^2} \leq C \Norm{\hat{F}_p}{L^2},
\quad j = 3,4.
\ees
Therefore, we completed the proof of 
\be
\label{eq:up:AKp2L2:F}
\AKp^2\Norm{\hat u_p}{L^2} \leq C \Norm{\hat{F}_p}{L^2}.
\ee
Interpolating \eqref{eq:up:L2:F} and \eqref{eq:up:AKp2L2:F}, we obtain
the first inequality in Lemma \ref{lem:F}. The second inequality in 
Lemma \ref{lem:F} can be proved similarly.

At $y = 0$, we have
\begin{align*}
\Abs{\py \hat u_p(0)} 
&= \Abs{\int_{-a}^0 \exp(\Abs{K^Tp}t)\hat F_p(t)\dt}\\
&\leq \Big(\int_{-a}^0 \exp(2\Abs{K^Tp}t)\dt\Big)^{1/2} \Norm{\hat F_p}{L^2}\\
& = \left(\frac{1 - \exp(-2\Abs{K^T p}a)}{2\Abs{K^Tp}}\right)^{1/2}
  \Norm{\hat F_p}{L^2}.
\end{align*}
Therefore, we obtain
\bes
\Abs{\py \hat u_p(0)} \leq C \Norm{\hat F_p}{L^2}, \qquad
\AKp^{1/2}\Abs{\py \hat u_p(0)} \leq C \Norm{\hat F_p}{L^2}.
\ees
Through interpolation, we finish the proof of the third inequality in 
Lemma~\ref{lem:F}. We use the following two inequalities to prove the last inequality in Lemma \ref{lem:F}. At $y = -a$, we have
\begin{align*}
\Abs{\hat{u}_p(-a)} 
  &\leq \frac{\exp(-\Abs{K^Tp}a)}{\Abs{K^T p}}
    \int_{-a}^0\Abs{\sinh(\Abs{K^Tp}t)\hat{F}_p(t)} \dt \\
  &\leq \frac{\exp(-\Abs{K^Tp}a)}{\Abs{K^T p}} 
    \int_{-a}^0 \sinh(\Abs{K^Tp}a) \Abs{\hat{F}_p(t)} \dt \\
  &\leq \frac{1 - \exp(-2\Abs{K^Tp}a)}{2 \Abs{K^Tp}}\int_{-a}^0 |\hat{F}_p(t)|\dt,
\end{align*}
since $\Abs{\sinh(\AKp t)} \leq \sinh(\AKp a)$ on $[-a, 0]$.
Continuing,
\bes
\Abs{\hat{u}_p(-a)} 
  \leq \frac{1 - \exp(-2\AKp a)}{2 \AKp}
    \sqrt{a} \Norm{\hat{F}_p}{L^2}
  \leq C \Norm{\hat{F}_p}{L^2}.
\ees
Finally, since $\sinh(z) \leq \exp(-z)$ for $z < 0$,
\begin{align*}
\AKp^{3/2} \Abs{\hat{u}_p(-a)} 
  & \leq \AKp^{1/2}\exp(-\Abs{K^Tp}a)
  \int_{-a}^0\exp(-\Abs{K^Tp}t)\Abs{\hat{F}_p(t)} \dt \\
  & \leq \AKp^{1/2}\exp(-\Abs{K^Tp}a) 
  \left(\int_{-a}^0 \exp(-2\Abs{K^Tp}t) \dt\right)^{1/2}\Norm{\hat{F}_p}{L^2} \\
  & \leq C \Norm{\hat{F}_p}{L^2}.
\end{align*}
\end{proof}

%
%

\bibliography{nicholls}

\end{document}